\xpatchcmd{\@todo}{\setkeys{todonotes}{#1}}{\setkeys{todonotes}{inline,#1}}{}{}
\newtheorem{thm}{Theorem}[section]
\newtheorem{lem}[thm]{Lemma}
\newtheorem{cor}[thm]{Corollary}
\newtheorem{conj}[thm]{Conjecture}
\theoremstyle{definition}
\newtheorem{defn}[thm]{Definition}
\newtheorem{obs}[thm]{Observation}
\newtheorem{rem}[thm]{Remark}
\newtheorem{ex}[thm]{Example}
\newtheorem{ques}[thm]{Question} 
\renewcommand{\le}{\leqslant}  
\renewcommand{\ge}{\geqslant}
\newcommand{\wt}{\widetilde}
\newcommand{\ind}{\mathds{1}}
\newcommand{\eps}{\varepsilon}
\newcommand{\abs}[1]{\left\vert#1\right\vert}
\newcommand{\ie}{{i.e.,}}
\newcommand{\eg}{{e.g.,}}
\let\ga=\alpha    
           \let\go=\omega     
\let\gC=\Gamma
\newcommand{\cF}{\mathcal{F}}
\newcommand{\cG}{\mathcal{G}}
\newcommand{\cL}{\mathcal{L}}
\newcommand{\cR}{\mathcal{R}}
\newcommand{\bN}{\mathbb{N}}
\newcommand{\bR}{\mathbb{R}}
\newcommand{\bT}{\mathbb{T}}
\newcommand{\bZ}{\mathbb{Z}}        
\DeclareMathOperator{\pr}{\mathds{P}}
\DeclareMathOperator{\DL}{DL}
\DeclareMathOperator{\Aut}{Aut} 
\newcommand{\wh}[1]{\widehat{#1}}
\newcommand{\fmax}{\mathrm{FMaxSF}}
\newcommand{\FMSF}{\mathrm{FMSF}}
\newcommand{\w}{\mathbf{w}}
\newcommand{\level}{\mathrm{Level}}
\newcommand{\dist}{\mathrm{dist}}
\begin{document}
\title{Weighted-amenability and percolation}

\author[Terlov]{Grigory Terlov}
\address{
Grigory Terlov \\
Department of Statistics and Operations Research,
University of North Carolina,
Chapel Hill, NC,
USA,
and
HUN-REN Alfr\'ed R\'enyi Institute of Mathematics,
Re\'altanoda u. 13-15, Budapest 1053, Hungary.
}
\email{gterlov[at]unc.edu}

\author[Tim\'ar]{\'Ad\'am Tim\'{a}r}
\address{
\'Ad\'am Tim\'{a}r\\
Division of Mathematics,
The Science Institute, University of Iceland
Dunhaga 3 IS-107 Reykjavik, Iceland and
HUN-REN Alfr\'ed R\'enyi Institute of Mathematics,
Re\'altanoda u. 13-15, Budapest 1053, Hungary.
}
\email{madaramit[at]gmail.com}


\begin{abstract}
In 1999, Benjamini, Lyons, Peres, and Schramm introduced a notion of weighted-amenability for transitive graphs that is equivalent to the amenability of its automorphism group. 
For unimodular graphs this notion coincides with classical graph-amenability and has been intensely studied. In the present work, we show that many classical unimodular results can be extended to the nonunimodular setting, which is further motivated by recent progress in the mcp (measure class preserving or quasi-pmp) setting of measured group theory. 
To this end, we prove new characterizations of weighted-amenability, in particular that it is equivalent to all finite unions of levels inducing amenable graphs. Hutchcroft conjectured that the latter property implies that $p_h<p_u$, where $p_h$ is the critical probability for the regime where clusters of Bernoulli percolation are infinite total weight and $p_u$ is the uniqueness threshold. We prove a relaxed version of his conjecture à la Pak--Smirnova-Nagnibeda. Further characterizations are given in terms of the spectral radius and invariant spanning forests. One of the consequences is the continuity of the phase transition at $p_h$ for weighted-nonamenable graphs.
\end{abstract}
\maketitle
\setcounter{tocdepth}{1}
\tableofcontents

\section{Introduction and main results}

Percolation theory began in 1957 by physicists \cite{broadbent1957percolation}, who were interested in studying the percolation of water in porous stones. Naturally, they considered 
a simplified version of the real-world phenomena by equating the stone with $\bZ^d$ grid and letting the passages for water (\ie\ bonds or edges) be open at random with probability $p$ independently of each other. Today it is a classical model and is known as \textit{Bernoulli$(p)$ percolation}. 
By switching the focus from lattices and trees, in 1996 Benjamini and Schramm opened up the way to study percolation on general \textit{connected locally finite quasi-transitive} graphs \cite{BSbeyond}. In turn, quasi-transitive graphs were further split into two categories: unimodular (those that ``look like" Cayley graphs) and nonunimodular \cite{Haggstrom99,BLPS99inv,Timar06nonu}.

For the class of nonunimodular transitive graphs, many standard techniques fail because of an intrinsic asymmetry. Thus, besides a few notable exceptions, many conjectures stated for all transitive graphs have been resolved only under the additional assumption of unimodularity. Transitive graphs are naturally equipped with \textit{a relative weight function}, which is constant $1$ exactly for unimodular transitive graphs. Decomposition of vertices in sets of different weights quantifies the asymmetry in case of nonunimodular graphs. Besides posing a challenge from a technical perspective, another source of interest in this class of graphs is the particular phase transition phenomena that they produce \cite{PeresPeteScol,Timar06nonu,Pengfei18, Hutchcroft20}. Namely, due to the weights, the infinite subgraphs of nonunimodular graphs could be of two types: where the total weight is infinite or finite. We call such subgraphs \textit{heavy} or \textit{light}, respectively. Light infinite subgraphs often have properties somewhat in between finite and infinite components of the ``usual" transitive graphs, thus exhibiting particularly interesting behavior. While often being more mysterious than their unimodular nonamenable counterparts, in some regards nonunimodular graphs are better understood. A breakthrough result by Hutchcroft \cite{Hutchcroft20} shows the existence of several critical exponents for nonunimodular transitive graphs, and that they take mean-field values. This makes nonunimodular transitive graphs one of the few classes of transitive graphs where explicit critical exponents are known. 
It is important to note that even for unimodular graphs the existence of a nonunimodular subgroup of its automorphisms is useful, and in fact the mentioned results were proved in this generality. We will also follow this more general setup.
Finally, another source of interest comes from measured group theory and measurable combinatorics, where an increasing amount of attention has been paid to the \textit{measure class preserving} (mcp also known as quasi-pmp) graphs, equivalence relations, and actions of countable groups \cite{Ts:hyperfinite_ergodic_subgraph,AnushRobin,FmaxSF22, bowen2024one}. Establishing results in the mcp setting is in many ways parallel to studying percolation on nonunimodular transitive graphs through the relative weight function which is also defined for the orbit equivalence relation of such actions (and is known as the Radon--Nikodym cocycle). The connection between these fields extends beyond a similarity in themes and techniques, as results in one of them often enable advances in the other. The rising interest in the mcp setting is largely due to the success of the program initiated by Tserunyan in 2017 that aimed at filling the gaps in the understanding of such equivalence relations in comparison to their pmp counterpart. One of the highlights of this program is the recently announced resolution of a long standing question of measure equivalence classification of Baumslag-Solitar groups \cite{MEBaumslagSolitar}. The present work develops the theory on the percolation side of the interplay and fits into the same program.


For the above reasons, we find it timely to revisit the notion of {\it weighted-amenability} of a graph and study its fundamental properties (see Definition~\ref{def:wamen}). This notion was first introduced in \cite{BLPS99inv}, where the authors showed its equivalence to the amenability of the automorphism group of the graph. 
For unimodular transitive graphs (where the weight function is constant $1$) it reduces to the classical notion of graph-amenability.

\subsection{Percolation theory terminology}\label{sec:percintro}
We now introduce some of the common terminology in the field and our conventions. A more detailed overview of preliminaries can be found in Section~\ref{sec:preq}. As above, a \textbf{bond percolation} process on a graph $G=(V,E)$ is a probability measure $\textbf{P}$ on $2^E$. Throughout the paper we will assume that $G$ is connected and locally finite.
We refer to elements $\omega \in 2^E$ as \textbf{configurations} and we say that an edge $e \in E$ is \textit{present} (or \textit{open}) in $\omega$ if $e \in \omega$. 
The connected components of $\omega$ are called \textbf{clusters}. 
For $p \in [0,1]$, a bond percolation process on $G$ is called $\mathrm{Bernoulli}(p)$ if every edge is present in a configuration independently with probability $p$. 
We denote the measure associated with $\mathrm{Bernoulli}(p)$ bond percolation by $\pr_p$ and drop the word ``bond'' when it is clear from the context. A \textbf{site percolation} is defined similarly but as a measure on $2^V$.

Let $\Gamma$ be a closed subgroup of $\Aut(G)$. For $x,y \in V$ we define \textbf{the relative weight function} $\w:V^2\to\bR^+$ as
 \begin{equation}\label{def:haarweights}
 \w^y(x):=\w_{\Gamma}(x,y):=\frac{\abs{\Gamma_x y}}{\abs{\Gamma_y x}},
 \end{equation}
 where $\Gamma_v := \{\gamma\in\Gamma \mid \gamma v=v\}$ is the stabilizer of $v\in V$. We say that the group $\Gamma$ is \textbf{unimodular} if $\w^y(x)\equiv 1$, and the graph $G$ is \textbf{unimodular} if $\Aut(G)$ is. (For the standard definition of unimodularity via Haar measures and further details see Section~\ref{sec:nonunimod}.)
We often refer to the \textit{weight of a vertex}, fixing some arbitrary reference vertex $o$. Throughout this paper we omit writing the reference vertex $o$ in the superscript of $\w^o$, unless it is important to highlight. We also omit subscript $\Gamma$ when it is clear from the context.
We say that a set of vertices is \textbf{light} (resp.~\textbf{heavy}) if the sum of the weights is finite (resp.~infinite). Notice that such definitions do not depend on the choice of the reference vertex. When $\Gamma$ is unimodular, i.e.~$\w\equiv1$, any infinite cluster is automatically heavy. On the other hand, in the nonunimodular case, the light clusters could be either finite or infinite in size. Moreover, \cite[Theorem 4.1.6]{Haggstrom99} implies that  light-infinite clusters cannot coexist with heavy ones in Bernoulli percolation, yielding four percolation phases for any transitive graph (see Figure~\ref{Fig:phases}). Each of these phases can be nontrivial: see \cite{Haggstrom99,Timar06nonu,Pengfei18, Hutchcroft20} for details and examples.

\begin{rem}
    Since we are mostly concerned with the presence of weighs \textit{we do not restrict the scope of this paper to the weights induced by the full group of automorphisms}. Instead, similarly to \cite{BLPS99inv,Hutchcroft20}, we state our results for the weights induced by some closed subgroup $\Gamma\subseteq \Aut(G)$ that acts transitively on $G$. In particular, our analysis applies to all transitive graphs, but yields new results only for those whose automorphism group admits such a nonunimodular subgroup.
    
    For simplicity, we mostly restrict ourselves to the transitive case, but the definitions and results could be extended to the quasi-transitive setting without difficulty (see Lemma~\ref{lem:quasi-to-trans}).
    
\end{rem}

\begin{figure}[htb]
\begin{center}
	\begin{tikzpicture}[thick, scale=0.7]
	
	\draw [-] (-12,0) to (11,0);
 
        \draw (-12,0.3) to (-12,-0.3);
        \draw (-12,-0.3) node[below] {$0$};
        \draw (-7.5,0.3) to (-7.5,-0.3);
	\draw (-7.5,-0.3) node[below] {$p_c$};
        \draw (-0.5,0.3) to (-0.5,-0.3);
	\draw (-0.5,-0.3)node[below] {$p_h$};
        \draw (6.5,0.3) to (6.5,-0.3);
	\draw (6.5,-0.3) node[below] {$p_u$};
        \draw (11,0.3) to (11,-0.3);
        \draw (11,-0.3) node[below] {$1$};

        \draw (-10,1) node[above] {There are only};
        \draw (-10,0.3) node[above] {finite clusters};
        \draw (-4,1) node[above] {$\exists$ $\infty$-many infinite clusters};
        \draw (-4,0.3) node[above] {all of which are light};
        \draw (3,1) node[above] {$\exists$ $\infty$-many infinite clusters};
        \draw (3,0.3) node[above] {all of which are heavy };
        \draw (9,1) node[above] {$\exists\,!$ infinite cluster};
        \draw (9,0.3) node[above] {and it is heavy};
    \end{tikzpicture}
\end{center}
\caption{Depiction of four phases of Bernoulli$(p)$ percolation.}
\label{Fig:phases}
\end{figure}

\subsection{Weighted-amenability and its equivalents}
The present and the following subsection describe our main contributions to understanding and characterizing {\it weighted-amenability}, our central concept which will be defined soon. Figure~\ref{Fig:wamen-equiv} summarizes the various equivalents. 

One of the main conjectures in the field \cite[Conjecture 6]{BSbeyond} connects these phases to the geometry of the underlying graph. This conjecture says that \textit{a quasi-transitive graph $G$ is amenable if and only if $p_c = p_u$}, \ie\ there is no phase with infinitely many infinite clusters. Here by amenability of a graph, sometimes referred as \textbf{graph-amenability}, we mean that the Cheeger constant of the graph $\Phi_V(G)$ is zero. Recall the definition
\begin{equation}\label{eq:Cheeger}
    \Phi_V(G):=\inf_{K\subseteq V\atop \abs{K}<\infty}\frac{\abs{\partial_V K}}{\abs{K}},
\end{equation}
where $\abs{\cdot}$ denotes the size and $\partial_V$ denotes the external vertex boundary of a set.

In the case of unimodular graphs the amenability of a graph does coincide with the amenability of its automorphism group. This statement is known as the Soardi--Woess--Salvatori theorem, proved in \cite[Corollary 1]{Soardi-Woess} for transitive graphs and in \cite[Theorem 1]{Salvatori} for quasi-transitive ones. 

\begin{thm}[Soardi--Woess--Salvatori]\label{thm:Soardi--Woess--Salvatori}
A quasi-transitive graph is amenable if and only if its automorphism group is amenable and unimodular.
\end{thm}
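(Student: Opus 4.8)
The plan is to reduce to the transitive case by Lemma~\ref{lem:quasi-to-trans}, and then to reduce the transitive statement, via two facts recalled above, to a single combinatorial assertion that I prove directly. So assume from now on that $\Gamma:=\Aut(G)$ is transitive. By \cite{BLPS99inv}, weighted-amenability of $G$ is equivalent to amenability of $\Gamma$; and when $\Gamma$ is unimodular the weight function equals $1$ identically, so weighted-amenability then coincides with ordinary graph-amenability. Combining these, the conjunction ``$\Gamma$ amenable and unimodular'' is equivalent to ``$G$ weighted-amenable and $\Gamma$ unimodular'' (first fact), and then to ``$G$ amenable and $\Gamma$ unimodular'' (second fact, whose hypothesis is present in the conjunction). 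Hence the theorem is equivalent to the single implication \emph{``$G$ amenable $\Rightarrow$ $\Gamma$ unimodular''}, i.e.\ to its contrapositive: \emph{if $\Gamma$ is non-unimodular then $\Phi_V(G)>0$}. (One should double-check there is no circularity here, namely that the proof of the equivalence in \cite{BLPS99inv} does not itself use the present theorem; it does not, since it relates weighted-amenability to amenability of $\Gamma$ directly through F\o{}lner sets in $\Gamma$, the weights being already built into the notion.)

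To prove that $\Gamma$ non-unimodular implies $\Phi_V(G)>0$, fix a reference vertex $o$ and let $\w$ be the weight cocycle on $V\times V$, so that $\w(x,z)=\w(x,y)\,\w(y,z)$ for all $x,y,z$. Since $\w$ is therefore multiplicative along paths, $\w\equiv1$ on every edge would force $\w\equiv1$, contradicting non-unimodularity; hence some edge $\{x_0,y_0\}$ has $\w(x_0,y_0)\neq1$, and after swapping $x_0$ and $y_0$ if necessary we may set $\lambda:=\w(x_0,y_0)>1$. By transitivity the number $N_\lambda$ of neighbors $w$ of a vertex $v$ with $\w(v,w)=\lambda$ does not depend on $v$, and likewise $N_{1/\lambda}$ is a constant; moreover $N_\lambda\ge1$, and since $w\sim v$ with $\w(v,w)=\lambda$ forces $\w(w,v)=1/\lambda$, also $N_{1/\lambda}\ge1$. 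The crucial identity is
\[
N_\lambda \;=\; \lambda\,N_{1/\lambda}.
\]
I would obtain it from the non-unimodular mass-transport principle (see \cite{BLPS99inv,Timar06nonu}): applying the transport rule $\sum_v f(o,v)=\sum_v f(v,o)\,\w(v,o)$ to the diagonally $\Gamma$-invariant function $f(u,v):=\ind[\,u\sim v,\ \w(u,v)=\lambda\,]$ yields precisely $N_\lambda=\lambda N_{1/\lambda}$. (Equivalently, this is a short orbit--stabilizer count: for each $\Gamma_o$-orbit $O$ of $\lambda$-neighbors of $o$ one has $|O|=|\Gamma_o w|=\lambda\,|\Gamma_w o|$ with $w\in O$, and reversing edges matches $\sum_O|\Gamma_w o|$ with $N_{1/\lambda}$.)

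A boundary count then finishes the argument. Orient every edge $\{u,v\}$ with $\w(u,v)=\lambda$ from $u$ to $v$; this is well defined because then $\w(v,u)=1/\lambda\neq\lambda$. Each vertex is the tail of exactly $N_\lambda$ and the head of exactly $N_{1/\lambda}$ of these oriented edges, so for any finite $K\subseteq V$, counting the oriented $\lambda$-edges with tail in $K$, then those with head in $K$, and subtracting, we get (writing $\partial_E K$ for the set of edges with exactly one endpoint in $K$)
\[
(N_\lambda-N_{1/\lambda})\,|K|\;=\;\#\{\text{tail}\in K,\ \text{head}\notin K\}-\#\{\text{head}\in K,\ \text{tail}\notin K\}\;\le\;|\partial_E K|\;\le\;d\,|\partial_V K|,
\]
where $d$ is the common degree, the last inequality because each vertex of $\partial_V K$ sends at most $d$ edges into $K$. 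Hence $\Phi_V(G)\ge(N_\lambda-N_{1/\lambda})/d=(\lambda-1)N_{1/\lambda}/d\ge(\lambda-1)/d>0$, so $G$ is non-amenable, as required.

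The only genuinely non-trivial ingredient is the combinatorial fact that non-unimodularity forces a positive Cheeger constant; the rest is bookkeeping with the two recalled facts, so I expect the main obstacle to be care rather than depth --- chiefly, invoking the mass-transport identity with the cocycle in the correct direction, and stating the reduction to the transitive case cleanly. Should one want a proof that avoids the equivalence of \cite{BLPS99inv}, the natural alternative is Kesten's amenability criterion for locally compact second-countable groups, combined with the identification of the spectral radius of simple random walk on $G$ with the operator norm of a bi-$\Gamma_o$-invariant convolution operator on $L^2(\Gamma)$; there the delicate point, and the one that truly uses unimodularity of $\Gamma$, is precisely that norm identification.
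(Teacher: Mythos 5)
The paper does not prove this theorem: it records it as the Soardi--Woess--Salvatori theorem and cites \cite{Soardi-Woess} (transitive case) and \cite{Salvatori} (quasi-transitive case). So there is no in-text proof to compare against, and your proposal should be judged on its own.

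Your reduction is both correct and efficient. Writing the target as ``$G$ amenable $\iff$ ($\Gamma$ amenable and $\Gamma$ unimodular)'', combining \cite[Theorem~3.9]{BLPS99inv} with the observation that $\w\equiv1$ when $\Gamma$ is unimodular collapses the theorem to the single implication ``$G$ amenable $\Rightarrow$ $\Gamma$ unimodular'', i.e.\ its contrapositive ``$\Gamma$ nonunimodular $\Rightarrow \Phi_V(G)>0$''. Your convention $\w(a,b)=m(\Gamma_a)/m(\Gamma_b)$ (the paper's $\w_\Gamma(a,b)=\w^b(a)$) is internally consistent, your TMTP application is in the correct direction, and the identity $N_\lambda=\lambda N_{1/\lambda}$ does hold under that convention (it is the same as the paper's $\delta N_\delta=N_{1/\delta}$ after translating $N_\lambda \leftrightarrow N_{1/\delta}$). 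The orientation-and-counting bound $\Phi_V(G)\ge(N_\lambda-N_{1/\lambda})/d=(\lambda-1)N_{1/\lambda}/d>0$ is exactly the elementary combinatorial fact that drives the theorem, and the parenthetical orbit--stabilizer derivation also checks out against \eqref{def:haarweights}. Your remark about non-circularity is apt: \cite[Theorem~3.9]{BLPS99inv} is proved via F\o{}lner sets in $\Gamma$ and does not invoke Soardi--Woess--Salvatori.

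Two small points you should make explicit. First, the reduction to the transitive case via Lemma~\ref{lem:quasi-to-trans} handles the translation of amenability of $\Gamma$ and unimodularity between $(G,\Gamma)$ and $(G',\Gamma')$, but item~(1) of that lemma concerns $\w$-amenability, not graph-amenability; you also need ``$G$ amenable $\iff G'$ amenable'', which follows because $G$ and $G'$ are quasi-isometric bounded-degree graphs and amenability is a quasi-isometry invariant. Second, since you chose a convention for $\w$ opposite in direction to the paper's $\w^o(\cdot)$, a careless reader comparing to the proof of \eqref{eq:wD=d} in the paper will see the identity ``$\delta N_\delta=N_{1/\delta}$'' and think you have a sign error; a one-line remark fixing your convention would prevent that confusion. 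Beyond these presentational matters, the argument is sound.
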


Hence all nonunimodular graphs are nonamenable in the sense of \eqref{eq:Cheeger}. But some of them may have an amenable automorphism group. Moreover, such graphs can exhibit hyperfinite-like properties (meaning the existence of an exhaustion by increasing configurations of invariant percolation processes with only finite clusters), which is not possible in the unimodular nonamenable setting. In Benjamini, Lyons, Peres, and Schramm \cite[Section 3]{BLPS99inv} introduced weighted-amenability, or \textbf{$\w$-amenability} for short, and they extended the Soardi–Woess–Salvatori theorem by showing the equivalence of $\w$-amenability to group-amenability for graphs with a quasi-transitive automorphism group (see Subsection~\ref{sec:wamen}). The more general form of the definition allows for a graph and some transitive group of automorphisms on it:
\begin{defn}[Weighted-amenability, \cite{BLPS99inv}]\label{def:wamen}
Let $\Gamma$ be a closed subgroup of $\Aut(G)$ that acts transitively on $G$ and let $\w$ be the induced relative weight function as in \eqref{def:haarweights}. Then we say that $G$ is weighted-amenable or $\w$-amenable (with respect to the action of $\Gamma$) if 
\begin{equation}\label{eq:wCheeger}
    \Phi^{\w}_V(G):=\inf_{F\subseteq V\atop \abs{F}<\infty}\frac{\w^o(\partial_V F)}{\w^o(F)}=0,
\end{equation}
where $o$ is some reference vertex, $\abs{\cdot}$ denotes the size, and $\partial_V$ denotes the external vertex boundary, and for a set $A\subseteq V$ we define $\w^{o}(A):=\sum_{x\in A}\w^{o}(x)$. Otherwise we call $G$ weighted-nonamenable or $\w$-nonamenable (with respect to the action of $\Gamma$).
\end{defn}
Notice that weighted-amenability does not depend on the choice of the reference vertex $o\in V$. A similar notion of weighted isoperimetric constant was also introduced in the context of measured group theory by Kaimanovich \cite{kaimanovich1997amenability}.

A relative weight function on a graph defines a decomposition of the vertex sets into \textbf{levels}. We say that two vertices belong to the same level if they are of equal weight. It turns out that weighted-amenability is equivalent to {\bf level-amenability}, that is, the property that every finite union of levels induces an amenable subgraph. Level-amenability was first defined implicitly in \cite{Timar06nonu}, and it is at the center of \cite[Conjecture 8.5]{Hutchcroft20}. 
\begin{thm}\label{thm:levelamen-hf}
Let $\Gamma$ be a closed subgroup of $\Aut(G)$ that acts transitively on $G$ and $\w$ be the induced relative weight function as in \eqref{def:haarweights}. Then the following are equivalent:
\begin{enumerate}
    \item\label{thm:levelamen-hf:amen} $G$ is $\w$-amenable,
    \item\label{thm:levelamen-hf:lvl} $G$ is level-amenable,
    \item\label{thm:levelamen-hf:hf} $G$ is hyperfinite.
\end{enumerate}
\end{thm}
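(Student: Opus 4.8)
The plan is to pass through amenability of the modular kernel of $\Gamma$. Let $\Delta\colon\Gamma\to(\bR_{>0},\cdot)$ be the continuous homomorphism $\Delta(\gamma):=\w^o(\gamma o)$ (the modular function of $\Gamma$), and put $\Gamma^1:=\ker\Delta$. Then $\Gamma^1$ is a closed normal subgroup, $\Gamma/\Gamma^1\cong\Delta(\Gamma)$ is abelian, every vertex stabilizer $\Gamma_x$ lies in $\Gamma^1$, and — being the kernel of the modular function — $\Gamma^1$ is unimodular; moreover $\Gamma^1$ preserves every level $L_w:=\{x\in V:\w^o(x)=w\}$ setwise and acts transitively on it, so it acts on any finite union of levels with finitely many orbits and compact open stabilizers. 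Two reductions: since closed subgroups of amenable groups are amenable and an extension of an abelian group by an amenable group is amenable, $\Gamma$ is amenable iff $\Gamma^1$ is; and by the Benjamini--Lyons--Peres--Schramm theorem \cite{BLPS99inv} (valid as stated there for an arbitrary closed transitive $\Gamma$), condition~\eqref{thm:levelamen-hf:amen} holds iff $\Gamma$ is amenable. It thus suffices to show that $\Gamma^1$ is amenable if and only if \eqref{thm:levelamen-hf:lvl} holds, and if and only if \eqref{thm:levelamen-hf:hf} holds.

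\smallskip
\emph{\eqref{thm:levelamen-hf:amen} $\Leftrightarrow$ \eqref{thm:levelamen-hf:lvl}.} If $\Gamma^1$ is amenable then, for any finite union of levels $L$, $\Gamma^1$ is an amenable unimodular group acting on the locally finite graph $G[L]$ with finitely many vertex orbits, so transferring a Følner sequence of compact subsets of $\Gamma^1$ to its orbits (the standard Følner-set transference, as in the easy direction of Theorem~\ref{thm:Soardi--Woess--Salvatori}) gives $\Phi_V(G[L])=0$; hence $G$ is level-amenable. Conversely, assume $G$ is level-amenable and apply this with $L$ a single level $L_{w_0}$. The graph $G[L_{w_0}]$ is amenable and is vertex-transitive under $\Gamma^1$, so by Theorem~\ref{thm:Soardi--Woess--Salvatori} (applied to this, a priori possibly disconnected, quasi-transitive graph) $\Aut(G[L_{w_0}])$ is amenable. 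The restriction homomorphism $\Gamma^1\to\Aut(G[L_{w_0}])$ has amenable image (a subgroup of an amenable group) and compact kernel (its elements fix a vertex, hence lie in some $\Gamma_x$), so $\Gamma^1$ is amenable.

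\smallskip
\emph{\eqref{thm:levelamen-hf:amen} $\Rightarrow$ \eqref{thm:levelamen-hf:hf}.} Now $\Gamma$ is amenable, and it is a totally disconnected, second countable, locally compact group, hence carries a Følner sequence $(K_n)$ of compact sets. Running the Ornstein--Weiss quasi-tiling construction for $\Gamma$ and pushing it forward along the quotient map $\Gamma\to\Gamma/\Gamma_o=V$ produces, for each $n$, a $\Gamma$-invariant random partition of $V$ into finite pieces such that the probability that a fixed edge has both endpoints in one piece tends to $1$ as $n\to\infty$. Letting $\pi_n$ consist of the edges internal to a piece gives $\Gamma$-invariant bond percolations with only finite clusters; taking the $K_n$ hierarchically nested and driving all stages from one common source of randomness, $(\pi_n)$ may be coupled to be increasing with $\bigcup_n\pi_n=E$ almost surely. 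Thus $G$ is hyperfinite. Extracting an \emph{increasing} family of \emph{$\Gamma$-invariant} finite-cluster configurations exhausting $E$ from a Følner sequence of a tdlc group acting transitively on $G$ is the only genuinely delicate point here — it is the ``amenable $\Rightarrow$ hyperfinite'' half of the classical picture, carried out in the (nonunimodular) transitive setting; the remaining steps are transference arguments or applications of previously known results.

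\smallskip
\emph{\eqref{thm:levelamen-hf:hf} $\Rightarrow$ \eqref{thm:levelamen-hf:amen}.} Suppose instead $c:=\Phi^{\w}_V(G)>0$, and let $\omega$ be any $\Gamma$-invariant bond percolation with only finite clusters, writing $C_x$ for the cluster of $x$. Because $\w^o(\gamma\,\cdot)=\Delta(\gamma)\,\w^o(\cdot)$, the transport $f(y,x):=\frac{\w^o(x)}{\w^o(C_x)}\,\ind[y\in\partial_V C_x]$ is diagonally $\Gamma$-equivariant, and the weighted mass-transport principle \cite{Timar06nonu,Hutchcroft20} yields
\[
\E\big[\#\{\text{clusters }C:\ o\in\partial_V C\}\big]
=\E\Big[\textstyle\sum_y f(o,y)\Big]
=\E\Big[\textstyle\sum_y f(y,o)\,\w^o(y)\Big]
=\E\Big[\frac{\w^o(\partial_V C_o)}{\w^o(C_o)}\Big]\ge c.
\]
The left-hand count is at most the number $\deg^{\mathrm c}_\omega(o)$ of edges at $o$ that are closed in $\omega$, so $\E[\deg^{\mathrm c}_\omega(o)]\ge c$ for \emph{every} such $\omega$. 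But a hyperfinite exhaustion $\pi_n\uparrow E$ would force $\E[\deg^{\mathrm c}_{\pi_n}(o)]=\sum_{e\ni o}\pr[e\notin\pi_n]\to0$, a contradiction. Hence $G$ is $\w$-amenable, which completes the equivalences.
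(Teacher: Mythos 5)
Your proposal routes through the modular kernel $\Gamma^1=\ker\Delta$ and tries to deduce all three equivalences from amenability of $\Gamma^1$; this is genuinely different from the paper's argument, which works directly with Cheeger constants averaged over slice copies for \eqref{thm:levelamen-hf:amen}$\Rightarrow$\eqref{thm:levelamen-hf:lvl}, then \eqref{thm:levelamen-hf:lvl}$\Rightarrow$\eqref{thm:levelamen-hf:hf} via level-percolation and Theorem~\ref{thm:BLPShf}, and closes the loop with the weighted-degree bound (Theorem~\ref{thm:Ewh{D}inv}). However, your implication \eqref{thm:levelamen-hf:lvl}$\Rightarrow$\eqref{thm:levelamen-hf:amen} has a genuine gap. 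You assume level-amenability and then only use it for a \emph{single} level $L_{w_0}$. That is far weaker than the hypothesis and provably insufficient. In the nonunimodular trees $T_{r,s}$ of Example~\ref{ex:nonuni-tree} with $r,s\ge 2$ (which are $\w$-nonamenable), no two vertices of equal weight are adjacent, so $G[L_{w_0}]$ has no edges; it is trivially amenable, the vertex stabilizers in $\Aut(G[L_{w_0}])$ are trivial, and the restriction kernel is compact — yet $\Gamma^1$ (equivalently $\Gamma$) is nonamenable. The nonamenability of $T_{r,s}$ is only visible in slices of several consecutive levels, which your argument never examines. In the same vein, Theorem~\ref{thm:Soardi--Woess--Salvatori} is a statement about a \emph{connected} quasi-transitive graph, while $G[L_{w_0}]$ is typically a disjoint union of infinitely many components; there $\Aut(G[L_{w_0}])$ contains arbitrary permutations of the components and is not controlled by the amenability of any one of them, so the asserted amenability of $\Aut(G[L_{w_0}])$ does not follow. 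Replacing the single level with a finite slice does not repair this, since the slice can still be disconnected and the permutation action of $\Gamma^1$ on its components is not governed by component-wise amenability; the paper's path through hyperfiniteness sidesteps this entirely.

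Two secondary remarks. Your \eqref{thm:levelamen-hf:amen}$\Rightarrow$\eqref{thm:levelamen-hf:hf} via the Ornstein--Weiss quasi-tiling for the tdlc group $\Gamma$ and pushing forward along $\Gamma\to\Gamma/\Gamma_o$ is plausible, but you acknowledge yourself that producing an \emph{increasing, $\Gamma$-invariant} sequence of finite-cluster configurations exhausting $E$ from a F\o lner sequence is the whole content of that implication; the paper gets it for free from Theorem~\ref{thm:BLPShf}. Your \eqref{thm:levelamen-hf:hf}$\Rightarrow$\eqref{thm:levelamen-hf:amen}, on the other hand, is correct: the mass transport $f(y,x)=\frac{\w^o(x)}{\w^o(C_x)}\ind[y\in\partial_V C_x]$ is diagonally $\Gamma$-invariant, the TMTP identity gives $\E\big[\w^o(\partial_V C_o)/\w^o(C_o)\big]\le \E\big[\deg^{\mathrm c}_\omega(o)\big]$, and letting $\omega$ range over a hyperfinite exhaustion forces $\Phi^{\w}_V(G)=0$; this is essentially the argument behind Theorem~\ref{thm:Ewh{D}inv} that the paper cites.
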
 
The equivalence of \eqref{thm:levelamen-hf:amen} and \eqref{thm:levelamen-hf:hf} was essentially shown in \cite{BLPS99inv}.
More precise definitions and the proof are given in Section \ref{sec:levelamen}. Our proof of \eqref{thm:levelamen-hf:lvl}$\Rightarrow$\eqref{thm:levelamen-hf:hf} relies on percolation on levels of the graph, which uses that any transitive graph is \textbf{hyper-unimodular}, i.e.\ is an increasing union of invariant random unimodular subgraphs. Although hyper-unimodularity is simple to show it turns out to be very useful. In fact, a related approach was already used in the literature \cite{Timar06nonu,Pengfei18}, where authors considered so-called 1-partitions of the levels. An analogous idea was also recently independently used in measured group theory to resolve the problem of measure equivalence of Baumslag--Solitar groups \cite{MEBaumslagSolitar}.

Furthermore, we give a probabilistic characterization of weighted-amenability in  Theorem \ref{thm:BLPSnew}, which generalizes the next result of
Benjamini, Lyons, Peres, and Schramm.
\begin{thm}[{\cite[Theorem 5.3]{BLPS99inv}}]\label{thm:BLPSinv5.3}
    Let $\Gamma$ be a closed subgroup of $\Aut(G)$ that acts transitively on $G$. Then each of the following conditions implies the
next one:
    \begin{enumerate}
        \item G is amenable.
        \item\label{BLPS2} There is a $\Gamma$-invariant random spanning tree of $G$ with $\le 2$ ends a.s.
        \item\label{BLPS3} There is a $\Gamma$-invariant random connected subgraph $\go$ of $G$ with $p_c(\go) = 1$ a.s.
        \item $\Gamma$ is amenable.
    \end{enumerate}
    Moreover if $\Gamma$ is unimodular than all four conditions are equivalent.
\end{thm}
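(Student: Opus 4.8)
The plan is to establish the cycle of implications $(1)\Rightarrow(2)\Rightarrow(3)\Rightarrow(4)$ and then, under the extra assumption that $\Gamma$ is unimodular, to close the loop with $(4)\Rightarrow(1)$. Three of these four steps are soft once Theorem~\ref{thm:levelamen-hf} and the Soardi--Woess--Salvatori theorem (Theorem~\ref{thm:Soardi--Woess--Salvatori}) are available; the genuine construction is $(1)\Rightarrow(2)$, which I expect to be the main obstacle.

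For $(1)\Rightarrow(2)$: if $G$ is amenable then $\Gamma$, being a closed transitive subgroup of $\Aut(G)$, is amenable and unimodular by Theorem~\ref{thm:Soardi--Woess--Salvatori}, so $\w\equiv1$ and, by Theorem~\ref{thm:levelamen-hf}, $G$ is hyperfinite. Hyperfiniteness supplies a $\Gamma$-invariant nested sequence of partitions $\mathcal P_0\preceq\mathcal P_1\preceq\cdots$ of $V$ into finite blocks whose union is connected and spanning, so that the block containing any fixed vertex $o$ increases to all of $V$. From this data I would build the tree scale by scale: given a $\Gamma$-invariant forest $T_n$ whose components are exactly the $\mathcal P_n$-blocks, pass to $T_{n+1}$ by adjoining, inside each $\mathcal P_{n+1}$-block, edges of $G$ that connect its finitely many $\mathcal P_n$-subblocks, chosen so as to string the subblocks along a path. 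The increasing union $T:=\bigcup_n T_n$ is then a $\Gamma$-invariant spanning tree, and since $\#\mathrm{ends}(T)=\lim_n\#\{\text{infinite components of }T\setminus B_n\}$ for $B_n$ the $\mathcal P_n$-block of $o$, one reads off that $T$ has at most two ends a.s. The delicate point, and the heart of the matter, is to perform the attachments so that each block has at most two outgoing directions leading to infinity: the graph obtained by contracting the $\mathcal P_n$-subblocks of a $\mathcal P_{n+1}$-block need not admit a Hamiltonian path, and even when the subblocks can be strung along a path of $G$-edges one must also route the internal trees so that each subblock's two attachment vertices serve as the extremities of its piece — all $\Gamma$-equivariantly. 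This is the technical core carried out in \cite[Section~5]{BLPS99inv}, which I would invoke.

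For $(2)\Rightarrow(3)$: take $\go$ to be the $\Gamma$-invariant spanning tree $T$ furnished by~(2); it is connected, so it remains to check $p_c(T)=1$ a.s. A tree with at most two ends consists of a finite or bi-infinite ``trunk'' path together with uniformly finite bushes, so it has branching number $1$; concretely, under $\mathrm{Bernoulli}(p)$ with $p<1$ the trunk is a.s.\ cut into finite segments (Borel--Cantelli) while every bush is finite, so all clusters are finite. Hence $p_c(T)=1$ a.s., as required. For $(3)\Rightarrow(4)$: let $\go$ be a $\Gamma$-invariant connected configuration with $p_c(\go)=1$ a.s., attach i.i.d.\ $\mathrm{Uniform}[0,1]$ marks $(U_e)_{e\in E}$ independent of $\go$, and set $\go_p:=\{e\in\go:U_e\le p\}$ for $p\in[0,1)$. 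Each $\go_p$ is $\Gamma$-invariant, and conditionally on $\go$ it is $\mathrm{Bernoulli}(p)$ percolation on the graph $\go$ with $p<p_c(\go)$, hence has only finite clusters a.s.; moreover $\go_p\uparrow\go$ as $p\uparrow1$, and $\go$ is connected and spanning. This family exhibits $G$ as hyperfinite, so by Theorem~\ref{thm:levelamen-hf} $G$ is $\w$-amenable, and by the equivalence of weighted-amenability with amenability of $\Gamma$ from \cite{BLPS99inv}, $\Gamma$ is amenable.

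Finally, assume $\Gamma$ is unimodular. Then $\w\equiv1$, so weighted-amenability of $G$ coincides with ordinary graph-amenability; hence if $\Gamma$ is amenable (condition~(4)) then $G$ is $\w$-amenable by \cite{BLPS99inv}, i.e.\ $G$ is amenable, which is condition~(1). Combined with the cycle already established, all four conditions are equivalent in the unimodular case, completing the proof.
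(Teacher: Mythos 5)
The paper does not actually prove this theorem; it is quoted verbatim from BLPS (their Theorem~5.3). So there is no ``paper's proof'' to compare against, and I am reviewing your proposal on its own merits, using the paper's proof of the weighted analogue Theorem~\ref{thm:BLPSnew} as the natural benchmark. With that in mind: $(1)\Rightarrow(2)$ is explicitly outsourced to \cite[Section~5]{BLPS99inv}, which is acceptable; $(2)\Rightarrow(3)$ is correct (one small slip: the bushes hanging off the trunk of a $\le 2$-ended tree are finite but not \emph{uniformly} finite, though this does not affect the argument); $(4)\Rightarrow(1)$ under unimodularity is correct.

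The genuine gap is in $(3)\Rightarrow(4)$. You assert that ``$\go$ is connected and spanning'' and then conclude that the family $\go_p\uparrow\go$ exhibits $G$ itself as hyperfinite, so that Theorem~\ref{thm:levelamen-hf} applies. But condition~(3) only gives a $\Gamma$-invariant random \emph{connected} subgraph $\go$; nothing in the hypothesis forces $\go$ to be spanning, i.e.\ $\mathbf{P}(x\in V(\go))$ may well be strictly less than $1$. In that case your exhaustion covers only $\go$, so you have shown that $\go$ is hyperfinite, not that $G$ is, and the appeal to Theorem~\ref{thm:levelamen-hf} (which concerns hyperfiniteness of $G$) does not go through. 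The missing ingredient is a way to pull an exhaustion of $\go$ back to an exhaustion of the ambient graph $G$. This is exactly what the paper does in the proof of Theorem~\ref{thm:BLPSnew}\,\eqref{thm:BLPSnew3}$\Rightarrow$\eqref{thm:BLPSnew1}: assign to each $x\in V$ its nearest vertex $W(x)$ in $\go$ (tie-broken uniformly), define $\xi_n$ to consist of those edges $(x,y)$ of $G$ for which $W(x)$ and $W(y)$ lie in the same cluster of $\go_{1-1/n}$, and then use the (tilted) mass transport principle to show each $\xi_n$-cluster is finite/light while $\xi_n\nearrow G$. You should incorporate this argument (or an equivalent one); as written, the step fails whenever $\go$ is not spanning.
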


\noindent
Following the above theorem, they asked if the converse to the last implication holds and showed that it does in the unimodular case. The second author answered their question negatively by constructing a counterexample in the nonunimodular case \cite{Timar06nonu}. 
The original question of \cite{BLPS99inv} turns out to have a positive answer if we replace $p_c(\go)$ with $p_h(\go)$. Furthermore, the proper generalization of \cite[Theorem 5.3]{BLPS99inv} is obtained by considering weights everywhere: $\w$-amenability replacing amenability, $p_h$ replacing $p_c$, and nonvanishing ends replacing ends. An end $\xi$ is called \textbf{$\w$-vanishing} if $\w(x_n)$ converges to $0$ for any sequence of vertices $x_n$ that converges to $\xi$. Then the respective parts of \cite[Theorem 5.3]{BLPS99inv} (as in Theorem~\ref{thm:BLPSinv5.3}) turn into full equivalences:

\begin{thm}\label{thm:BLPSnew}
    Let $\Gamma$ be a closed subgroup of $\Aut(G)$ that acts transitively on $G$ and $\w$ be the induced relative weight function as in \eqref{def:haarweights}. Then the following conditions are equivalent:
    \begin{enumerate}
        \item\label{thm:BLPSnew1} $G$ is $\w$-amenable.
        \item\label{thm:BLPSnew2} There is a $\Gamma$-invariant random spanning tree of $G$ with $\le 2$ $\w$-nonvanishing ends a.s.
        \item\label{thm:BLPSnew3} There is a $\Gamma$-invariant random connected subgraph $\go$ with $p_h(\go)=1$ a.s.
        \item\label{thm:BLPSnew4} $\Gamma$ is amenable.
    \end{enumerate}
\end{thm}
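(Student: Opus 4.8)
The strategy is to establish the cycle of implications $(\ref{thm:BLPSnew1})\Rightarrow(\ref{thm:BLPSnew2})\Rightarrow(\ref{thm:BLPSnew3})\Rightarrow(\ref{thm:BLPSnew4})$ and to close it with $(\ref{thm:BLPSnew4})\Rightarrow(\ref{thm:BLPSnew1})$, which is precisely the equivalence between $\w$-amenability of $G$ and amenability of $\Gamma$ proved in \cite[Section~3]{BLPS99inv}. Since the three implications in the cycle run parallel to the chain of Theorem~\ref{thm:BLPSinv5.3}, the plan is to rerun each step of that classical argument with weights inserted throughout: Følner sets become $\w$-Følner sets, finite clusters become light clusters, $p_c$ becomes $p_h$, and ends become $\w$-nonvanishing ends. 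The extra ingredient that makes this substitution go through, and that compensates for the fact that --- unlike in \cite{BLPS99inv} --- the graph $G$ itself need not be amenable, is Theorem~\ref{thm:levelamen-hf}, supplying hyperfiniteness and level-amenability.

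For $(\ref{thm:BLPSnew1})\Rightarrow(\ref{thm:BLPSnew2})$ I would use hyperfiniteness from Theorem~\ref{thm:levelamen-hf}: fix an increasing sequence of $\Gamma$-invariant percolations $\omega_0\subseteq\omega_1\subseteq\cdots$ with $\omega_0$ consisting of isolated vertices, each $\omega_n$ having only finite clusters, and $\bigcup_n\omega_n$ a connected spanning subgraph of $G$ (the last normalization being obtainable by coupling with a fixed invariant connected spanning structure). One then forms an invariant random spanning tree $\cT$ by the standard telescoping construction --- inside each $\omega_n$-cluster pick, $\Gamma$-equivariantly, a spanning tree of the finite graph obtained by contracting the $\omega_{n-1}$-clusters, and realize each contracted edge by a uniformly chosen edge of $G$ --- and takes $\cT$ to be the union over $n$. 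The real work is to bound the number of $\w$-nonvanishing ends of $\cT$ by $2$: a ray converging to a $\w$-nonvanishing end eventually lies inside a fixed finite union of levels, which by level-amenability induces an amenable graph, and combining the amenable case of Theorem~\ref{thm:BLPSinv5.3} (in its quasi-transitive form) applied inside such a band with a mass-transport argument across bands should exclude three pairwise disjoint $\w$-nonvanishing rays. Alternatively one can carry out the construction band-by-band, applying the amenable case of Theorem~\ref{thm:BLPSinv5.3} to each finite union of consecutive levels to obtain a two-ended forest there and then gluing consecutive bands.

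The implication $(\ref{thm:BLPSnew2})\Rightarrow(\ref{thm:BLPSnew3})$ is soft: taking $\go=\cT$, for any $p<1$ Bernoulli$(p)$ deletes infinitely many edges from every ray and every bi-infinite path of the tree, so a.s.\ no cluster contains an entire ray to a $\w$-nonvanishing end; every cluster therefore has only $\w$-vanishing ends, and an estimate down the tree (using that the weights decay geometrically towards a $\w$-vanishing end of an invariant tree) shows such a cluster has finite total weight. Hence for every $p<1$ all clusters are light, i.e.\ $p_h(\go)=1$ a.s. For $(\ref{thm:BLPSnew3})\Rightarrow(\ref{thm:BLPSnew4})$ it is enough to prove $(\ref{thm:BLPSnew3})\Rightarrow(\ref{thm:BLPSnew1})$ and cite \cite{BLPS99inv}: if $\go$ is $\Gamma$-invariant, connected (and spanning) with $p_h(\go)=1$, then for every $p<1$ the clusters of Bernoulli$(p)$ on $\go$ form a $\Gamma$-invariant partition of the vertices into light pieces that increase to the clusters of $\go$ as $p\uparrow1$; a nonunimodular mass-transport computation --- tracking the weight cocycle through every transport identity as in \cite{Hutchcroft20} --- then shows that the ratio of the expected $\w$-mass of the $G$-boundary of the piece of $o$ to the expected $\w$-mass of that piece tends to $0$, producing finite sets that witness $\Phi^{\w}_V(G)=0$, so that $G$ is $\w$-amenable and $\Gamma$ is amenable.

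I expect the main obstacle to be $(\ref{thm:BLPSnew1})\Rightarrow(\ref{thm:BLPSnew2})$: obtaining \emph{some} invariant spanning tree from hyperfiniteness is routine, but pinning the number of $\w$-nonvanishing ends at the sharp value $2$, rather than merely ``finitely many'', is where level-amenability has to be used essentially and where the interaction between the telescoping (or band) construction and the weight function must be controlled carefully. A secondary technical point is the nonunimodular mass-transport bookkeeping in $(\ref{thm:BLPSnew3})\Rightarrow(\ref{thm:BLPSnew1})$, where ``light'' must systematically replace ``finite'' and the weight cocycle enters every identity.
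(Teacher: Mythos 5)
Your overall structure is sound—you establish a cycle of implications and close it with the BLPS equivalence, and the paper does essentially the same logical work, organised as hub-and-spoke around \eqref{thm:BLPSnew1} via Theorems~\ref{thm:levelamen-hf}, \ref{thm:trees and ph} and~\ref{thm:Forestequiv_new}. Where you part ways with the paper is exactly where you predicted the main difficulty would be, and there your sketch has a genuine gap.

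For \eqref{thm:BLPSnew1}$\Rightarrow$\eqref{thm:BLPSnew2}, your claim that ``a ray converging to a $\w$-nonvanishing end eventually lies inside a fixed finite union of levels'' is false. A $\w$-nonvanishing end only asserts the existence of \emph{some} set converging to it with $\limsup$ of weights bounded away from zero; even the geodesic ray itself is only guaranteed to have $\limsup>0$, not to eventually stay inside a band. The weights along the ray may dip arbitrarily close to zero infinitely often while still having a subsequence bounded below, and then the proposed band/slice argument has nothing to bite on. The paper avoids this entirely: the spanning tree is built as an increasing union of finite trees (Corollary~\ref{cor:wamen-treeable}), hence is hyperfinite by construction, and Theorem~\ref{thm:trees and ph} converts hyperfiniteness of an invariant tree into ``$\le 2$ $\w$-nonvanishing ends''. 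That equivalence in turn rests either on the mcp Adams theorem (Theorem~\ref{thm: Anush-Robin} applied through the cluster graphing) or on the paper's direct mass-transport argument with $\w$-trifurcation vertices, neither of which is a band argument. This is the piece of machinery your outline is missing.

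A second, smaller issue is in \eqref{thm:BLPSnew2}$\Rightarrow$\eqref{thm:BLPSnew3}: there is no reason for the weights along a geodesic ray toward a $\w$-vanishing end to decay geometrically, and they do not in general; the correct statement is only that such geodesics are \emph{light} (Lemma~\ref{lem:geod-path}, from Theorem~\ref{thm: Anush-Robin}). Even granting lightness of each geodesic, it does not by itself give that an entire cluster is light. The clean route is the one the paper takes: a heavy cluster necessarily contains infinitely many vertices of maximal weight and hence a $\w$-nonvanishing end (Lemma~\ref{lem:maxweight}/Corollary~\ref{lem:noends}), while your argument that Bernoulli percolation a.s.~severs the at most two rays to the nonvanishing ends of $T$ shows that no Bernoulli cluster can have such an end; combining these gives lightness directly, with no decay-rate assumption needed. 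Again the paper simply invokes Theorem~\ref{thm:trees and ph}, which packages this.

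For \eqref{thm:BLPSnew3}$\Rightarrow$\eqref{thm:BLPSnew1} your instinct is right that a mass-transport bookkeeping will work, but the paper's version is more robust and you should be aware of two points it handles that your sketch glosses over. First, $\go$ need not be spanning, so one assigns each $x\in V$ to its nearest $\go$-vertex $W(x)$ and pulls the partition back; second, the target is hyperfiniteness (Definition~\ref{def:hyperfinite}) rather than directly exhibiting Følner sets, which is then converted to $\w$-amenability by Theorem~\ref{thm:levelamen-hf}. Extracting finite sets with small weighted boundary-to-volume ratio from the invariant partition is possible but requires the mass-transport gymnastics of \cite{BLPS99inv} adapted to the tilted MTP; the hyperfiniteness route sidesteps this entirely.
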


\noindent
See Theorem \ref{thm:Forestequiv_new} for some further equivalents, in the spirit of \eqref{thm:BLPSnew2}. In the proofs of Theorems \ref{thm:BLPSnew} and \ref{thm:Forestequiv_new} we rely on the connection with measured group theory (via the cluster graphing construction) and recent results in that field \cite{AnushRobin,FmaxSF22}.

\begin{figure}[htb]

\begin{center}
	\begin{tikzpicture}[%
    auto,
    block/.style={
      rectangle,
      draw=black,
      thick,
      fill=black!10,
      text width=7em,
      align=center,
      rounded corners,
      minimum height=2em
    },
    block1/.style={
      rectangle,
      draw=black,
      thick,
      fill=black!10,
      text width=8em,
      align=center,
      rounded corners,
      minimum height=2em
    },
    block2/.style={
      rectangle,
      draw=black,
      thick,
      fill=black!10,
      text width=9em,
      align=center,
      rounded corners,
      minimum height=2em
    },
      block1red/.style={
      rectangle,
      draw=black,
      thick,
      fill=red!10,
      text width=8em,
      align=center,
      rounded corners,
      minimum height=2em
    },
    line/.style={
      draw,thick,
      -latex',
      shorten >=2pt
    },
    cloud/.style={
      draw=red,
      thick,
      ellipse,
      fill=red!20,
      minimum height=1em
    }
  ]
        
        \draw (-4,5) node[block2] (O) {$\forall \ga<1$, $\exists$ an invar. \\
         site perc.~$\mathbf{P}$ with $\mathbf{P}(o\in\go)>\ga$ and\\ no infinite cluster};
        \draw (-4,2) node[block] (A) {$\Aut(G)$ is amenable};
        \draw (3,2) node[block1red] (B) {$G$ is $\w$-amenable};
        \draw (3,-1) node[block2] (C) {$G$ is level-amenable};
        \draw (3,5) node[block2] (D) {$G$ is hyperfinite};
        \draw (-4,-1) node[block1] (E) {$p_h(G)=p_u(G)$};
        \draw (-3.7,-4.2) node[block2] (F) {$\exists$ increasing union \\of levels $\{G_n\}$ s.t. $$\lim_{n\to\infty}p_c(G_n)\ge p_u(G)$$};
        \draw (3,-4) node[block2] (G) {The spectral radius
        of the $\sqrt{\w}$-biased RW $$\rho^{(\w)}(G)=1$$};
        \draw (8.8,4.6) node[block2] (H) {$\exists$ an invariant random connected subgraph $\go$ with $p_h(\go)=1$};
        \draw (8.8,2) node[block2] (I) {$\exists$ invariant random \\spanning tree with $\le$ 2\\ {$\w$-nonvanishing} ends};
        \draw (8.8,-1) node[block2] (J) {All trees in any invariant random\\ forest have $\le$ 2\\ {$\w$-nonvanishing} ends};
        \draw (8.8,-4) node[block2] (K) {All trees in any invariant random forest have $p_h=1$};

        \draw [<->,dashed](-3,-0.5) to node [midway,sloped,above] {\scriptsize{{Conjecture~\ref{conj:ph_vs_pu}}}} (1.8,1.5);
        \draw[->] (-0.6,0.5) -- (-3,-0.5);
        
        \draw [<->] (-2.31,2) to node[above] {\scriptsize{{\cite[Theorem 3.9]{BLPS99inv}}}} (1.2,2);

        \draw [<->] (3,1.5) to node[right] {\scriptsize{Theorem~\ref{thm:levelamen-hf}}} (3,-0.5);
        \draw [<->] (3,4.5) to node [right] (TextNode1) {\scriptsize{{\cite[Theorem 5.1]{BLPS99inv}}}} (3,2.5);

        \draw [<-,dashed] (0.8,-1.2) to node [below] (TextNode){\scriptsize{{\cite[Conjecture 8.5]{Hutchcroft20}}}} (-2.1,-1.2);
        \draw (-0.65,-1.6) node[below] {\scriptsize{{Relaxation: Theorem~\ref{thm:wPSN}}}};
        
        \draw [->] (0.8,-0.8) to node [above]  {\scriptsize{{\cite[Corollary 5.8]{Timar06nonu}}}} (-2.1,-0.8);
        \draw [<->] (-4,-1.6) to node [right] {\scriptsize{{\cite[Corollary 4.8]{Pengfei18}}}} (-4,-3.1);
        \draw [<->] (-4,2.6) to node [right] {\scriptsize{{\cite[Theorem 5.1]{BLPS99inv}}}} (-4,3.9);
        
        \draw [<->] (4.5,1.5) to [bend left=30] node [above, sloped] (TextNode1) {\scriptsize{Theorem~\ref{thm:wKesten} (Kesten)}} (4.5,-2.8);
        
        \draw [<->](4.8,2) to node [above] {\scriptsize{{Theorem~\ref{thm:BLPSnew}}}} (6.8,2);
        \draw (5.8,2) -- (5.8,-1) node [midway,above, sloped] (TextNode1) {\scriptsize{Theorem \ref{thm:Forestequiv_new}}};
        \draw (5.8,-4) -- (5.8,-1);
        \draw [->] (5.8,-4) to (6.8,-4);
        \draw [->] (5.8,-1) to (6.8,-1);
        \draw (5.8,2.3) -- (5.8,3.3);
        \draw (5.8,3.7) -- (5.8,4.6);
        \draw [->] (5.8,4.6) to (6.8,4.6);
        
    \end{tikzpicture}
\end{center}
\caption{Depiction of the relations between various characterization of the weighted-amenability and relevant results.}
\label{Fig:wamen-equiv}
\end{figure}

\subsection{Weighted-amenability and Bernoulli percolation}
The fact that amenability of a graph implies $p_c=p_u$ was proven in \cite{pc=pu}; however the other direction of this conjecture is only known in particular cases that include transitive nonamenable graphs with suitably small spectral radius \cite{BSbeyond}, large Cheeger constant \cite{Schonmann01}, with large girth \cite{Nachmias12}, or for a suitably chosen Cayley graph of any nonamenable group \cite{PSN00}. Finally, this conjecture was resolved for graphs whose automorphism group contains a nonunimodular subgroup acting quasi-transitively \cite{Hutchcroft20}. In that work, Hutchcroft showed that for such graphs we have $p_c<p_h(\le p_u)$ and conjectured that level-amenability is equivalent to $p_h<p_u$ \cite[Conjecture 8.5]{Hutchcroft20}. While the forward direction was proven in \cite{Timar06nonu} in 2006, Hutchcroft suggested that the other direction is likely harder to prove than the ``$p_c<p_u$" conjecture of Benjamini and Schramm.

Our next result proves a relaxation of Hutchcroft's conjecture, as a nonunimodular analogue of the theorem of Pak and Smirnova-Nagnibeda on $p_c<p_u$
\cite{PSN00}.
\begin{thm}\label{thm:wPSN}
    Let $\Gamma$ be a closed subgroup of $\Aut(G)$ that acts transitively on $G$ and $\w$ be the induced relative weight function as in \eqref{def:haarweights}. If $G$ is $\w$-nonamenable then there exists a transitive graph $G'$ on the same vertex set that is quasi-isometric to $G$, $\Gamma$ acts transitively on $G'$, and such that $p_h(G',\Gamma)<p_u(G')$.
\end{thm}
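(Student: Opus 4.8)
The plan is to follow the Pak--Smirnova-Nagnibeda philosophy: keep the vertex set $V$ and the transitive $\Gamma$-action fixed --- so that the relative weight function \eqref{def:haarweights}, being a function of the $\Gamma$-action on $V$ and not of the edge set, is left unchanged --- and only enlarge the edge set, until the graph is ``so nonamenable'' that a heavy phase and a nonuniqueness phase provably separate. For $k\ge1$ let $G_k$ be the $k$-fuzz of $G$, i.e.\ the graph on $V$ with $x\sim y$ iff $1\le d_G(x,y)\le k$. Since $\Gamma\le\Aut(G)$ preserves $d_G$, it acts on $G_k$ by automorphisms, still transitively; $G_k$ is connected and locally finite; the identity $V\to V$ is a quasi-isometry $G\to G_k$; and $\w_\Gamma$, and with it the notions ``heavy''/``light'' and the value $p_h$, is literally unchanged for $(G_k,\Gamma)$. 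The first task is to show that $\rho^{(\w)}(G_k)$ can be made arbitrarily small. Since $G$ is $\w$-nonamenable, $\Gamma$ is nonamenable (Theorem~\ref{thm:BLPSnew}, the equivalence of items (1) and (4)), hence $\Aut(G)$ is nonamenable and $G$ is a nonamenable graph by Theorem~\ref{thm:Soardi--Woess--Salvatori}, and $\rho^{(\w)}(G)<1$ by the weighted Kesten theorem (Theorem~\ref{thm:wKesten}, cf.\ Figure~\ref{Fig:wamen-equiv}). One then estimates $\rho^{(\w)}(G_k)$ using the $\Gamma$-equivariant ``distance-exactly-$j$'' operators and the exponential growth of $G$; this is the weighted analogue of the key spectral lemma of \cite{PSN00}, that balls of large radius in a nonamenable Cayley graph yield a Cayley graph of arbitrarily small spectral radius.

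With $\rho^{(\w)}(G_k)$ made small, the proof reduces to two quantitative thresholds applied to $G_k$. First, an upper bound for $p_h$: a heavy cluster is easy to produce once the weighted spectral radius is small, so $p_h(G_k)\le g\bigl(\rho^{(\w)}(G_k)\bigr)$ with $g(t)\to0$ as $t\to0$. I would obtain this from a weighted first/second moment estimate along the ``weighted tree'' that Hutchcroft's mean-field analysis \cite{Hutchcroft20} attaches to a nonunimodular transitive graph --- the very structure already producing $p_c<p_h$ there --- essentially a weighted branching-number bound. Second, a lower bound for $p_u$: a weighted Benjamini--Schramm/Schramm-type nonuniqueness criterion, asserting that when the weighted spectral radius is small then for every $p$ below some $u\bigl(\rho^{(\w)}(G_k)\bigr)$ the connection probabilities decay too fast for a unique infinite cluster to exist, with $u(t)$ bounded away from $g(t)$ as $t\to0$ (ideally $u(t)\to1$). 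Granting these, choose $k$ so that $\rho^{(\w)}(G_k)$ lies in the range where $g<u$; then $p_h(G_k)\le g(\rho^{(\w)}(G_k))<u(\rho^{(\w)}(G_k))\le p_u(G_k)$, and $G':=G_k$ has all the required properties.

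The main obstacle is the second threshold, the weighted nonuniqueness criterion: this is where $\w$-nonamenability must be converted into genuine percolation information, and Hutchcroft flagged the ``$p_h<p_u$'' direction as likely harder than the classical ``$p_c<p_u$''. One has to rerun the spectral-radius argument of \cite{PSN00} with the Radon--Nikodym/Haar weights present, tracking the contribution of each level separately and arranging that the ``a cluster that is too thin cannot be unique'' dichotomy survives the weighting. A secondary but genuine technical point is the estimate that $\rho^{(\w)}(G_k)$ tends to $0$: the naive submultiplicative bound $\|A_j\|\le\|A_1\|^j$ on the weighted $\ell^2$-space is too lossy and does not in general yield a bound tending to $0$, so one needs finer control of the sphere operators (in the Cayley-graph case this is exactly \cite{PSN00}'s input). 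I expect the cleanest presentation to phrase both thresholds through the $\sqrt{\w}$-biased random walk and its Green function, so that $\rho^{(\w)}$ enters on both sides and ``$p_h<p_u$'' becomes the comparison of two explicit functions of $\rho^{(\w)}$, just as for ``$p_c<p_u$ when $\rho$ is small''.
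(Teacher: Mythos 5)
Your plan diverges from the paper's in the decisive place, and as written it has a real gap. The paper does \emph{not} try to derive both thresholds from $\rho^{(\w)}$, nor does it need a ``weighted nonuniqueness criterion''; it avoids exactly the obstacle you flag as the main one. The mechanism is a different thickening: after first invoking Lemma~\ref{lem:nonamenlvl} to pass to a graph whose every \emph{level} induces a nonamenable (unimodular, quasi-transitive) graph, the paper builds $G^{(k)}$ by adding edges only between endpoints of short paths \emph{that start and end in the same level} --- not the full $k$-fuzz. This keeps the between-level degree constant while the within-level degree grows, so a single level subgraph $L^{(k)}$ dominates: $\Phi_E(L^{(k)})/d(L^{(k)})\to1$ by the original PSN Lemma 2 (applicable because $L^{(k)}$ is an unweighted nonamenable quasi-transitive graph), and the ratios $d(L^{(k)})/d(G^{(k)})$ and $\Phi_E(L^{(k)})/\Phi_E(G^{(k)})$ both tend to $1$. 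Then $p_h(G^{(k)},\Gamma)\le p_c(L^{(k)})$ is \emph{free}: a level has constant weight, so any infinite cluster inside it is heavy, and $L^{(k)}$ being unimodular the classical bound $p_c\le 1/(\Phi_E+1)$ applies; while $p_u(G^{(k)})\ge 1/(\rho(G^{(k)})d(G^{(k)}))$ is the classical unweighted Benjamini--Schramm nonuniqueness bound. The degrees cancel, and choosing $k$ so that $\rho(G^{(k)})<1/\sqrt{2}$ finishes.

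Concretely, the gaps in your proposal are: (i) the two quantitative lemmas you invoke --- a weighted branching/first-second moment bound giving $p_h\le g(\rho^{(\w)})$, and a weighted nonuniqueness criterion giving $p_u\ge u(\rho^{(\w)})$ with $u$ eventually exceeding $g$ --- are not established anywhere, and you yourself identify the second as the main obstacle; the paper's whole point is to restructure the argument so that neither is needed. (ii) Using the full $k$-fuzz rather than within-level thickening destroys the structural feature (between-level degree bounded, level degree blowing up) that makes the unweighted PSN computation applicable; with the full fuzz the relevant sphere-operator estimate would have to be re-derived with the weights present, which is again the hard step you are trying to bound around. (iii) You also miss the preprocessing step of Lemma~\ref{lem:nonamenlvl}: without first arranging that each level is nonamenable, PSN's Lemma 2 cannot be applied to the level subgraph $L^{(k)}$, because an amenable $L^{(k)}$ has $\Phi_E(L^{(k)})=0$ for all $k$. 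So the skeleton ``thicken, estimate a spectral radius, compare two thresholds'' is the right philosophy, but the specific thickening and the specific thresholds must be chosen so that the weights cancel out --- and that is what makes the proof go through without the open weighted lemmas.
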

Let us mention that to prove $p_c<p_h$, Hutchcroft analyzed yet another phase threshold $p_t$, corresponding to the so-called tiltability phase transition. From 
\cite{Timar06nonu} it follows that $p_t\le p_h$ and Hutchcroft showed that under the above assumptions, the inequality $p_c<p_t$ necessarily holds. For more on various phase transitions in the nonunimodular setting, we refer the reader to \cite{Hutchcroft20,HutchcroftPan24dim} and the references therein.
 
The next result shows the continuity of the heaviness phase transition for $\w$-nonamenable graphs for Bernoulli percolation. For unimodular transitive graphs (where $p_c=p_h$) such a result is due to Benjamini, Lyons, Peres, and Schramm \cite{BLPSpc}. In the nonunimodular setting, Tang proved a partial result. Namely, in \cite[Proposition 4.7]{Pengfei18} he utilized the slice approach of \cite{Timar06nonu} to show that there cannot be infinitely many heavy clusters at $p_h$, but our result does not rely on his approach.

\begin{thm}\label{thm:contph}
    Let $\Gamma$ be a closed subgroup of $\Aut(G)$ that acts transitively on $G$ and $\w$ be the induced relative weight function as in \eqref{def:haarweights}. Suppose $G$ is $\w$-nonamenable. Then 
    \begin{enumerate}
        \item\label{thm:contph1} there is no heavy cluster under Bernoulli$(p_h(G,\Gamma))$ bond percolation.
        \item\label{thm:contph2}  In particular, we have $p_h(G,\Gamma)<1$.
    \end{enumerate}
\end{thm}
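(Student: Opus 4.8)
The plan is to prove statement~(1) by a sprinkling argument in the spirit of Benjamini--Lyons--Peres--Schramm's treatment of $\theta(p_c)=0$ on nonamenable graphs \cite{BLPSpc}, and then to read off statement~(2) almost for free. The one essential external input is Theorem~\ref{thm:BLPSnew}: since $G$ is $\w$-nonamenable, \emph{no} $\Gamma$-invariant random connected subgraph of $G$ can have $p_h=1$ almost surely. The rough idea is that if a heavy cluster survives at $p_h$, then, being an invariant connected subgraph, it is ``robustly heavy'' enough that one can thin it slightly and still find a heavy cluster, which would contradict the criticality of $p_h$.

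First I would set up the contradiction hypothesis: suppose $\pr_{p_h}(\exists\text{ a heavy cluster})>0$. By $\Gamma$-ergodicity of Bernoulli percolation this event has probability $1$, and since $G$ is transitive, countable subadditivity over $V$ forces $\pr_{p_h}(C_o\text{ is heavy})>0$ (otherwise no vertex would ever lie in a heavy cluster). Using the $0$--$1$--$\infty$ law for the number of infinite clusters together with the fact that infinitely many heavy clusters cannot coexist at $p_h$, I would reduce to the case that there is a \emph{unique} heavy cluster $K$ at $p_h$, so that $\mathbf 1_{K}$ is a genuine $\Gamma$-invariant random connected subgraph of $G$. The crucial step is then: $p_h(K)$, with respect to the ambient weights restricted to $K$, is a $\Gamma$-invariant measurable functional of $\omega_{p_h}$, hence almost surely equal to a constant $c\in[0,1]$ by ergodicity; and if $c=1$ then $\mathbf 1_K$ would be a $\Gamma$-invariant random connected subgraph with $p_h=1$ a.s., contradicting $\w$-nonamenability via Theorem~\ref{thm:BLPSnew}. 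Therefore $c<1$.

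Now I would sprinkle. Realize all $\pr_p$, $p\le p_h$, on one probability space through i.i.d.\ uniforms $(U_e)_{e\in E}$ via $\omega_q:=\{e:U_e\le q\}$, so $\omega_p\subseteq\omega_{p_h}$. Conditionally on $\omega_{p_h}$ --- hence on $K$, which depends on $\omega_{p_h}$ only --- the variables $(U_e)_{e\in E(K)}$ are i.i.d.\ uniform on $[0,p_h]$, so the restriction of $\omega_p$ to $K$ is, conditionally on $\omega_{p_h}$, distributed as Bernoulli$(p/p_h)$ bond percolation on the graph $K$. Choosing $p$ with $c\,p_h<p<p_h$ (possible since $c<1$ and $p_h\ge p_c>0$) gives $p/p_h>c=p_h(K)$, so conditionally on $\omega_{p_h}$ the sub-configuration $\omega_p\cap K$ contains a heavy cluster with positive probability; averaging, $\pr_p(\exists\text{ a heavy cluster})>0$ although $p<p_h$, contradicting the definition of $p_h$ and proving~(1). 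For statement~(2): applying Theorem~\ref{thm:BLPSnew} to the deterministic connected subgraph $\go=G$, if $p_h(G,\Gamma)=1$ then $G$ itself is a $\Gamma$-invariant connected subgraph with $p_h=1$ a.s., so $G$ would be $\w$-amenable --- a contradiction; hence $p_h(G,\Gamma)<1$. (One can also deduce this from~(1), since the full vertex set is always heavy, so Bernoulli$(1)$ has a heavy cluster.)

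The part I expect to be most delicate is the reduction in the second paragraph to a single heavy cluster: if there are infinitely many heavy clusters at $p_h$, then ``the heavy cluster'' is not a well-defined $\Gamma$-invariant connected subgraph, and one must instead run the whole argument for the cluster of $o$ under the Palm measure, invoking the rooted/pointed analogues of the objects in Theorem~\ref{thm:BLPSnew}; this is where one either quotes the non-coexistence statement \cite[Proposition~4.7]{Pengfei18} or re-establishes enough of it directly. Everything else is soft once the sprinkling identity ``$\omega_p$ restricted to a cluster of $\omega_{p_h}$ is Bernoulli$(p/p_h)$ on that cluster'' and the a.s.\ constancy of $p_h(K)$ are in place; in particular, no mass-transport machinery is needed here beyond what already enters Theorem~\ref{thm:BLPSnew}.
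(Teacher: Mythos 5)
Your treatment of the single heavy cluster case is essentially the paper's argument, but phrased with an unnecessary detour. The key observation (which your sprinkling coupling proves) is that the unique heavy cluster $\sigma$ at $p_h$ must have $p_h(\sigma)=1$ a.s.: conditionally on $\omega_{p_h}$, Bernoulli$(q/p_h)$ percolation on $\sigma$ is dominated by $\omega_q$, which has no heavy cluster for $q<p_h$. Once you know this, Theorem~\ref{thm:BLPSnew}\eqref{thm:BLPSnew3} applied to $\sigma$ immediately contradicts $\w$-nonamenability. Your version sets $c:=p_h(\sigma)$ and branches on $c=1$ versus $c<1$, but the $c<1$ branch is vacuous --- your own sprinkling argument already forces $c=1$ --- so the structure is circular-looking even though correct. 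Your derivation of part~(2) via Theorem~\ref{thm:BLPSnew} applied to $\omega=G$ is fine, and is a different route from the paper (which reads it off from Theorem~\ref{thm:Ewh{D}inv}); this is a minor, valid variation.

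The genuine gap is the case of infinitely many heavy clusters at $p_h$. You explicitly defer it to either (i) ``the non-coexistence statement'' \cite[Proposition~4.7]{Pengfei18} or (ii) a ``rooted/pointed analogue of Theorem~\ref{thm:BLPSnew}'' under the Palm measure. Neither resolves the gap in a way consistent with the paper. Option (i) is exactly the route the authors say they are deliberately \emph{not} taking, and it would make part of Theorem~\ref{thm:contph} a restatement of Tang's result rather than a new proof. Option (ii) is not available: Theorem~\ref{thm:BLPSnew} is stated for $\Gamma$-invariant random \emph{connected} subgraphs, the union of all heavy clusters is not connected, and a Palm/rooted version of Theorem~\ref{thm:BLPSnew} is nowhere established in the paper --- formulating and proving it would be a substantial additional project, not an aside. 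The paper's actual argument for this case is genuinely different and does not go through a single invariant connected subgraph at all: it applies Theorem~\ref{thm:CorBernPerc} to produce, inside $\fmax_{\w}(\omega_{p_h})$, trees with infinitely many $\w$-nonvanishing ends; then Theorem~\ref{thm:trees and ph} gives $p_h<1$ for those trees; but since $p_h(\omega_{p_h})=1$ a.s., every subforest $\varphi\subseteq\omega_{p_h}$ also has $p_h(\varphi)=1$, a contradiction. None of that machinery appears in your proposal, so the $\infty$ case as you wrote it does not close.
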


That every nonamenable graphs has $p_c<1$ was proved in \cite{BSbeyond}. Part \ref{thm:contph2} of our theorem can be viewed as an analogue of that result in the weighted setting. See also Subsection \ref{subsec:ph}.

In \cite{Hutchcroft20}, Hutchcroft also asked if it is true that if $\Gamma'$ and $\Gamma\subseteq\Gamma'\subseteq\Aut(G)$ are quasi-transitive subgroups of automorphisms of a locally finite connected graph $G$, then $p_h(G,\Gamma)>p_h(G,\Gamma')$ if and only if $\w_\Gamma\neq \w_{\Gamma'}$, \cite[Question 8.6]{Hutchcroft20}. We found a negative answer to this question via a simple example, which shows that extra conditions are needed for the question. Consider an arbitrary transitive graph with a nonunimodular automorphism group, and add two extra leaves to each vertex, one yellow and one blue. Let $\Gamma'$ be the set of all automorphisms of the new graph (where the colors of the leaves are ignored), and $\Gamma$ be all those automorphisms that respect the colors of the leaves. Then $\w_\Gamma\not=\w_{\Gamma'}$, but  their ratio is between $1/2$ and 2 everywhere, so  $p_h(G,\Gamma)=p_h(G,\Gamma')$. One way to ask the question now is to assume transitivity instead of quasi-transitivity. Another possibility is to require that the ratio of the two weight functions is unbounded. In the transitive case this is equivalent to them being distinct, for the following reason. Suppose that $G$ is transitive and $\w_\Gamma\not=\w_{\Gamma'}$. We may assume that there is an edge $xy$ such that $\w_\Gamma(x)=\w_{\Gamma'}(x)$ and $\w_\Gamma(y)\not=\w_{\Gamma'}(y)$. Consider a $\gamma\in\Gamma$ that maps $x$ to $y$. Then the set $\{\w_\Gamma(\gamma^n(x))/\w_{\Gamma'}(\gamma^n(x)): n\geq 1\}$ in unbounded.

\subsection*{Organization of the paper}
We begin by reviewing the basic properties of nonunimodular graphs and introducing the corresponding weight function in Subsection~\ref{sec:nonunimod}. In Subsection~\ref{sec:wamen}, we recall the extension of the Soardi–Woess–Salvatori Theorem, along with several examples of nonunimodular graphs, while Subsection~\ref{sec:thresholds} adopts some inequalities linking isoperimetry and percolation into our weighted setup. Subsection~\ref{sec:ends} introduces weighted ends, Subsection~\ref{sec:mgt} summarizes some relevant results from measured group theory.
Section~\ref{sec:Kesten} defines the weighted simple random walk for nonunimodular graphs which makes $\w$ a stationary measure, and presents the extension of Kesten’s criterion for weighted-amenability through spectral radius. We also prove inequalities between the various weighted Cheeger constants that needed to be considered.
In these sections the proofs are mostly adaptations of existing techniques, our main contribution follows after these. In Section~\ref{sec:levelamen}, we introduce level-amenability and show its equivalence to weighted-amenability. In Section~\ref{sec:invarforest} we further characterize weighted-amenability in terms of invariant random spanning trees and forests (Theorem~\ref{thm:BLPSnew}).
Lastly, Section~\ref{sec:phasetrans} discusses results pertaining to percolation phase transitions, including the proofs of Theorems~\ref{thm:wPSN} and \ref{thm:contph}. We conclude the paper with open questions and further directions in Section~\ref{sec:next}.

%
\section{Preliminaries}\label{sec:preq}
%

\subsection{Nonunimodularity and relative weights}\label{sec:nonunimod}

Recall that the automorphism group $\Aut(G)$ of a connected locally finite graph $G := (V,E)$ is a locally compact group when equipped with the topology of pointwise convergence. It is a well-known fact, proven in \cite{Trofimov}, that the definition of unimodularity of $\Gamma\subseteq \Aut(G)$, presented in Subsection~\ref{sec:percintro}, is equivalent to its left Haar measure being right-invariant. We refer to \cite{BLPS99inv,LyonsBook} for an overview of unimodular automorphism groups and their significance for random subgraphs.

Let $\Gamma$ be a closed subgroup of $\Aut(G)$, $m$ be a left Haar measure on $\Gamma$, and $\w$ be the induced relative weight function as in \eqref{def:haarweights}. Then the the relative weight function $\w$ is equal to the \textbf{modular function}, \ie\ $\w^y(x)=m(\Gamma_x)/m(\Gamma_y)$. Furthermore, $\w$ is a $\Gamma$-invariant cocycle, meaning that the product of $\w^{y}(x)$ over the directed edges $(y,x)$ of any directed cycle is equal to $1$,
and  for any $x,y \in V$ and $\gamma\in\Gamma$ we have that
\begin{equation}\label{eq:invcoc}
    \w^y(x)
    =
    \frac{m(\Gamma_x)}{m(\Gamma_y)}
    =
    \frac{\abs{\Gamma_x y}}{\abs{\Gamma_y x}}
    =
    \frac{\abs{\Gamma_{\gamma x}\gamma y}}{\abs{\Gamma_{\gamma y}\gamma x}} 
    =
    \frac{m(\Gamma_{\gamma x})}{m(\Gamma_{\gamma y})}
    =
    \w^{\gamma y}(\gamma x).
\end{equation}
If $\Gamma$ acts quasi-transitively, by definition, there are only finitely many $\Gamma$-orbits, and hence if $\Gamma$ is also unimodular the set $\{\w^o(x) \mid x\in V\}$ is finite; the converse is also true: if $\{\w^o(x) \mid x\in V\}$ is finite, then $\Gamma$ is unimodular.

One of the main tools in the field is the Tilted Mass Transport Principle (TMTP), which we state for a transitive, possibly nonunimodular, graph. For various versions of this result see \cite{BLPS99inv,LyonsBook,Hutchcroft20}. We remark that in the literature the word ``tilted" did not appear until \cite{Hutchcroft20}, where a variant of the mass transport principle was introduced for quasi-transitive graph with an orbit representative chosen at random. However, nowadays it is used more broadly to mean the general mass transport principle tilted by the modular function.

\begin{thm}[Tilted Mass Transport Principle]\label{thm:TMTP}
    Let $\Gamma$ be a closed subgroup of $\Aut(G)$ that acts transitively on $G$ and $\w$ be the induced relative weight function as in \eqref{def:haarweights}. Then for any function $f:V\times V \to \bR^+$ that is invariant under the diagonal action of $\Gamma$ and for all $x,y\in V$ we have
    \[
    \sum_{z\in V} f(x,z)=\sum_{z\in V} f(z,y)\w^{y}(z).
    \]
\end{thm}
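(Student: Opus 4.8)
The plan is to derive the Tilted Mass Transport Principle (Theorem~\ref{thm:TMTP}) from the ordinary mass transport principle for unimodular groups, by passing to the left Haar measure on $\Gamma$ itself and keeping careful track of the modular function. First I would recall that, $G$ being transitive and $\Gamma$ acting transitively, we may identify (as a $\Gamma$-set) $V$ with the coset space $\Gamma/\Gamma_o$ for a fixed reference vertex $o$; equivalently, picking for each $z\in V$ an element $\gamma_z\in\Gamma$ with $\gamma_z o=z$, we get a measurable section. The key bookkeeping fact is the one already stated in the excerpt: $\w^y(x)=m(\Gamma_x)/m(\Gamma_y)$ where $m$ is a left Haar measure, together with the cocycle identity \eqref{eq:invcoc}, $\w^{\gamma y}(\gamma x)=\w^y(x)$.

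The core computation I would carry out is to unfold both sides of the claimed identity as integrals over $\Gamma\times\Gamma$ against Haar measure. Given $f:V\times V\to\bR^+$ diagonally $\Gamma$-invariant, define $\tilde f:\Gamma\times\Gamma\to\bR^+$ by $\tilde f(\alpha,\beta):=f(\alpha o,\beta o)$. Then $\tilde f$ is left-invariant under the diagonal action of $\Gamma$, and bi-invariant under $\Gamma_o\times\Gamma_o$ acting on the right. The sum $\sum_{z\in V} f(x,z)$ should be rewritten: fixing $\alpha$ with $\alpha o=x$, we have $\sum_{z} f(x,z)=\sum_{z}\tilde f(\alpha,\gamma_z)$, and this discrete sum over $z\in V=\Gamma/\Gamma_o$ becomes, up to a normalization constant $c=m(\Gamma_o)^{-1}$ which cancels, the integral $c\int_\Gamma \tilde f(\alpha,\beta)\,dm(\beta)$ — provided we are careful that each coset $\beta\Gamma_o$ has $m$-measure $m(\Gamma_o)$, which uses left-invariance of $m$. (Here one must check the sum is finite; this follows because $f$ is finitely supported in the second variable in the typical application, or one works with $\bR^+\cup\{\infty\}$ and monotone convergence throughout.) Symmetrically, $\sum_{z} f(z,y)\w^y(z)$ becomes, with $\beta o=y$, a constant times $\int_\Gamma \tilde f(\alpha,\beta)\,\w^{y}(\alpha o)\,dm(\alpha)$.

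So the identity reduces to the claim that $\int_\Gamma \tilde f(\alpha,\beta)\,dm(\beta)=\int_\Gamma \tilde f(\alpha,\beta)\,\w^{\beta o}(\alpha o)\,dm(\alpha)$ for all $\alpha,\beta$ — more precisely, after integrating out the fixed variable correctly, it is an instance of the relation between left and right Haar measure on $\Gamma$: $dm(\alpha^{-1})=\Delta_\Gamma(\alpha)^{-1}dm(\alpha)$ where $\Delta_\Gamma$ is the modular function of $\Gamma$, combined with the substitution $\beta\mapsto\alpha\beta$. Indeed, using left-invariance one writes $\int\tilde f(\alpha,\beta)dm(\beta)=\int \tilde f(e,\alpha^{-1}\beta)dm(\beta)$ after shifting, then substitutes, and the modular function of $\Gamma$ appears precisely as the ratio of Haar measures; the identification $\w^y(x)=m(\Gamma_x)/m(\Gamma_y)$ and the cocycle property then let us recognize $\Delta_\Gamma$ pushed down to $V$ as exactly $\w$. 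This is the standard computation showing that for a coset space $\Gamma/\Gamma_o$ the two "mass transports" differ by the modular function, and I would cite \cite{BLPS99inv} or \cite{LyonsBook} for the clean version if a fully self-contained derivation is not wanted.

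The main obstacle, I expect, is not any deep idea but getting the normalization constants and the direction of the modular-function twist exactly right — in particular making sure the twist lands on the correct variable ($z$ in $\sum_z f(z,y)\w^y(z)$, i.e.\ the source of the transport in the second sum) and with exponent $+1$ rather than $-1$. A robust way to pin this down, and the sanity check I would actually run, is to test the formula on $f(x,z)=\ind[z=x]$ (both sides give $1$, using $\w^y(y)=1$) and on $f(x,z)=\ind[x\sim z]$, which recovers the known fact $\deg(x)=\sum_{z\sim y}\w^y(z)$; and more decisively, to verify it for a rooted tree like the grandfather graph where $\w$ and $\Delta_\Gamma$ are computed explicitly, so that the sign of the exponent is unambiguous. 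Once the constant-chasing is fixed, the proof is just the unfolding-refolding described above together with invoking unimodularity of $\Gamma\times\Gamma$ acting on $\Gamma$ by left translation, or equivalently the left/right Haar comparison on $\Gamma$.
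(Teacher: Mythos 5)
The paper states Theorem~\ref{thm:TMTP} without giving a proof, pointing instead to \cite{BLPS99inv,LyonsBook,Hutchcroft20}, so there is no in-paper argument to compare against. Your proposal is essentially the standard derivation from those sources and is correct: unfold each sum over $V\cong\Gamma/\Gamma_o$ to a Haar integral on $\Gamma$ (each fiber contributes $m(\Gamma_o)$, which cancels), use diagonal invariance of $f$ plus left-invariance of $m$ to reduce to $x=y=o$, and then apply the change of variables $\int h(\gamma^{-1})\,dm(\gamma)=\int h(\gamma)\,\Delta(\gamma)^{-1}\,dm(\gamma)$ together with the identification $\Delta(\gamma)^{-1}=m(\Gamma_{\gamma o})/m(\Gamma_o)=\w^o(\gamma o)$, which follows from $\Gamma_{\gamma o}=\gamma\Gamma_o\gamma^{-1}$, left-invariance, and the defining property $m(A\gamma)=\Delta(\gamma)m(A)$ of the modular function. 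This puts the twist on the $z$-variable with exponent $+1$, exactly as in the statement. Two minor points worth tightening: (i) your middle display ``$\int\tilde f(\alpha,\beta)\,dm(\beta)=\int\tilde f(\alpha,\beta)\,\w^{\beta o}(\alpha o)\,dm(\alpha)$ for all $\alpha,\beta$'' reuses $\alpha,\beta$ simultaneously as fixed and dummy variables; introduce separate symbols $\alpha_0,\beta_0$ for the frozen arguments before integrating. (ii) The hypothesis does not make $f$ finitely supported, so rather than reasoning about finite support you should work throughout in $[0,\infty]$ and invoke Tonelli for the fibered integral; this is what the student already hints at and is the clean resolution. Neither issue affects correctness. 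Your sanity check $f(x,z)=\ind[x\sim z]$ recovering $d=\sum_{z\sim o}\w^o(z)$ is precisely the content of equation~\eqref{eq:wD=d} in the paper, so that is an apt cross-check.
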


\begin{rem}[Absolute vs.~relative weights]
    It is possible to introduce \textit{absolute} weights on a graph by defining them to be equal to a left Haar measure of the stabilizer of a given vertex (it is finite because $\Gamma$ is a locally compact group). Since it is unique up to a multiplicative constant, choosing such a measure is equivalent to selecting a reference point $o\in V$. Thus one can write the MTP as in Section 3 of \cite{BLPS99inv}:
     \[
    \sum_{z\in V} f(x,z)\w^{o}(y)=\sum_{z\in V} f(z,y)\w^{o}(z).
    \]
   We chose to instead work with a \textit{relative} weight function, defined on pairs of vertices as in~\eqref{def:haarweights}. As we will see later, this allows us to connect it directly with the Radon--Nikodym cocycle of a corresponding countable equivalence relation (see Subsection~\ref{sec:mgt}).
\end{rem}

\subsection{Weighted-amenability, examples of nonunimodular graphs}\label{sec:wamen}
Benjamini, Lyons, Peres, and Schramm \cite[Theorem 3.9]{BLPS99inv} generalized the Soardi--Woess--Salvatori theorem in the combination of the following two results.
\begin{thm}[{\cite[Theorem 3.9]{BLPS99inv}}]\label{thm:BLPSw-amen}
    Let $\Gamma$ be a closed subgroup of $\Aut(G)$ that
acts transitively on $G$. Then $\Gamma$ is amenable if and only if $G$ is $\w$-amenable.
\end{thm}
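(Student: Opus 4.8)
The plan is to prove both implications by exploiting the weighted mass transport principle (Theorem~\ref{thm:TMTP}) to pass between Følner-type data on the graph and Følner sets in the group $\Gamma$. Recall that $\Gamma$ is amenable iff it admits a left-invariant mean on $L^\infty(\Gamma,m)$, and since $\Gamma$ is locally compact second countable, this is equivalent to the existence of a sequence of compact subsets $K_n\subseteq\Gamma$ of positive finite Haar measure with $m(gK_n\triangle K_n)/m(K_n)\to 0$ for every $g$ in a (dense) generating set.

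\emph{$\w$-amenability $\Rightarrow$ $\Gamma$ amenable.} Fix the reference vertex $o$ and suppose $\Phi^{\w}_V(G)=0$, so there are finite sets $F_n\subseteq V$ with $\w^o(\partial_V F_n)/\w^o(F_n)\to 0$. For each $v\in V$ choose $\gamma_v\in\Gamma$ with $\gamma_v o=v$, and consider the compact sets $K_n:=\bigcup_{v\in F_n}\gamma_v\Gamma_o\subseteq\Gamma$. The key computation is that $m(K_n)=\sum_{v\in F_n}m(\gamma_v\Gamma_o)=\sum_{v\in F_n}m(\Gamma_o)$ is, up to the constant $m(\Gamma_o)$, a count of $|F_n|$, while the \emph{weighted} count is recovered because $m(\Gamma_v)=\w^o(v)m(\Gamma_o)$; one arranges the bookkeeping so that a generator $g\in\Gamma$ (say one moving $o$ a bounded distance) sends $K_n$ into a set differing from $K_n$ only along cosets indexed by $\partial_V F_n$ and its image, and then the modular-function weighting converts $\w^o(\partial_V F_n)/\w^o(F_n)\to 0$ into the Følner condition $m(gK_n\triangle K_n)/m(K_n)\to 0$. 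Since $\Gamma$ acts transitively and $G$ is connected locally finite, the elements moving $o$ within a fixed ball topologically generate $\Gamma$, so this suffices for amenability of $\Gamma$.

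\emph{$\Gamma$ amenable $\Rightarrow$ $\w$-amenability.} Conversely, take Følner sets $K_n\subseteq\Gamma$ and push them down to the vertex set: let $\mu_n$ be the pushforward of $\tfrac{1}{m(K_n)}m|_{K_n}$ under $\gamma\mapsto\gamma o$, a probability measure on $V$. Left-invariance of Haar measure together with the cocycle identity \eqref{eq:invcoc} shows that $\mu_n(v)$ behaves, up to the asymptotically negligible Følner error, like a multiple of $\w^o(v)$ restricted to a finite region; more precisely, for any fixed generator $g$ and neighbor structure, $\sum_v|\mu_n(gv)-\mu_n(v)|\to 0$. Feeding this into the weighted isoperimetric quotient — i.e. expanding $\w^o(\partial_V F)/\w^o(F)$ over a suitable level-set decomposition of the support of $\mu_n$ and using that the $\mu_n$-mass is asymptotically concentrated away from the ``boundary layer'' where edges leave — yields finite sets $F_n$ (for instance superlevel sets of $\mu_n/\w^o$) with $\w^o(\partial_V F_n)/\w^o(F_n)\to 0$, i.e. $\Phi^{\w}_V(G)=0$. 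Alternatively one can invoke the probabilistic route: amenability of $\Gamma$ gives a $\Gamma$-invariant mean, hence a $\Gamma$-invariant random finite subset (a ``Følner'' random set) by a standard weak-$\ast$ compactness argument, and the weighted mass transport principle turns the invariance into the vanishing of the expected weighted boundary-to-volume ratio.

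\emph{Main obstacle.} The delicate point in both directions is the bookkeeping that trades the \emph{unweighted} counting measure on cosets $\gamma_v\Gamma_o$ in $\Gamma$ (governed by the fixed constant $m(\Gamma_o)$) for the \emph{weighted} measure $\w^o(\cdot)$ on $V$ coming from the varying stabilizer sizes $m(\Gamma_v)$; this is exactly where the modular function enters and where one must use the Tilted Mass Transport Principle rather than a naive transport. One has to be careful that the $\gamma_v$ are chosen measurably/consistently and that the Følner error on the group side, which is controlled coset-by-coset, translates into an error controlled vertex-by-vertex after the modular reweighting — this is the step that genuinely uses \emph{weighted}-amenability as opposed to ordinary amenability, and is why the unimodular ($\w\equiv 1$) case reduces to the classical Soardi–Woess–Salvatori argument.
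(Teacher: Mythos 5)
The paper does not actually prove this theorem; it is stated as a citation to \cite[Theorem 3.9]{BLPS99inv}, so there is no ``paper's own proof'' to match against. That said, your general strategy --- trading weighted F\o lner data on $V$ for F\o lner sets in $\Gamma$ via the Haar measure --- is in the right spirit, but there is a genuine error at the exact point where you wave your hands (``one arranges the bookkeeping so that\ldots'').

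The problem is the choice of map from $\Gamma$ to $V$. You use $\gamma\mapsto\gamma o$, i.e.\ you build $K_n=\bigcup_{v\in F_n}\gamma_v\Gamma_o=\{\gamma:\gamma o\in F_n\}$. But $m(\gamma_v\Gamma_o)=m(\Gamma_o)$ by \emph{left}-invariance of $m$, so $m(K_n)=\abs{F_n}\,m(\Gamma_o)$: this is the \emph{unweighted} count, and likewise $m(gK_n\triangle K_n)=\abs{gF_n\triangle F_n}\,m(\Gamma_o)$. The resulting F\o lner ratio tracks $\abs{\partial_V F_n}/\abs{F_n}$, not $\w^o(\partial_V F_n)/\w^o(F_n)$, so the hypothesis of $\w$-amenability is not used at all, and the argument collapses in precisely the nonunimodular case (where $\w$-amenability does not imply ordinary graph-amenability). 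You do write down the relevant identity $m(\Gamma_v)=\w^o(v)\,m(\Gamma_o)$, but it does not apply to $\gamma_v\Gamma_o$: that coset has measure $m(\Gamma_o)$, not $m(\Gamma_v)$.

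The fix is to use the \emph{inverse} map $\gamma\mapsto\gamma^{-1}o$, i.e.\ set $K_n:=\{\gamma\in\Gamma:\gamma^{-1}o\in F_n\}=\bigcup_{v\in F_n}\{\gamma:\gamma v=o\}=\bigcup_{v\in F_n}\Gamma_o\gamma_v^{-1}$. These right cosets have left-Haar measure $m(\Gamma_o\gamma_v^{-1})=\Delta(\gamma_v^{-1})\,m(\Gamma_o)=m(\Gamma_v)=\w^o(v)\,m(\Gamma_o)$ (the modular function satisfies $\Delta(\gamma_v)=m(\Gamma_o)/m(\Gamma_v)=1/\w^o(v)$ since $\gamma_v^{-1}\Gamma_v\gamma_v=\Gamma_o$). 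Hence $m(K_n)=m(\Gamma_o)\,\w^o(F_n)$, the weighted volume. Moreover, if $g o=u\sim o$, then $gK_n=\{\gamma:\gamma^{-1}u\in F_n\}$, so $\gamma\in gK_n\triangle K_n$ forces $\gamma^{-1}$ to map the edge $\{o,u\}$ to a boundary edge of $F_n$, and therefore $\gamma^{-1}o\in\partial_V F_n\cup\partial^{\mathrm{int}}_V F_n$. Since adjacent vertices have weight ratio in $[1/d,d]$, this gives $m(gK_n\triangle K_n)\le (1+d^2)\,m(\Gamma_o)\,\w^o(\partial_V F_n)$ and hence the F\o lner ratio is bounded by a constant times $\w^o(\partial_V F_n)/\w^o(F_n)\to0$. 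The same inversion is needed in your converse direction: the push-forward of $m|_{K_n}$ under $\gamma\mapsto\gamma o$ satisfies $\mu_n(v)\le m(\Gamma_o)/m(K_n)$ uniformly in $v$ (it is essentially the counting measure, not $\w^o$), so the claim that $\mu_n$ ``behaves like a multiple of $\w^o$'' is false; pushing forward under $\gamma\mapsto\gamma^{-1}o$ gives $\mu_n(v)=m(K_n\cap\Gamma_o\gamma_v^{-1})/m(K_n)\le\w^o(v)\,m(\Gamma_o)/m(K_n)$, which is the correct weighted comparison. In the unimodular case the two maps give the same answer, so your error is invisible exactly when the theorem reduces to Soardi--Woess; it is fatal in every nontrivial instance.
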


By the theorem above, if $\Gamma$ is a closed subgroup of $\Aut(G)$ that
acts transitively on $G$, and there is a graph $G'$ on the vertex set of $G$ such that every element of $\Gamma$ is an automorphism of $G'$ then $G$ is $\w$-amenable if and only if $G'$ is.

\begin{lem}[{\cite[Lemma 3.10]{BLPS99inv}}]\label{lem:quasi-to-trans}
Let $G$ be a graph and $\Gamma$ be a closed quasi-transitive subgroup of $\Aut(G)$. For a vertex $o\in V$ let $r\in \bN$ be such that every vertex in $G$ is within distance $r$ of some vertex in $\Gamma o$. Form the graph $G'$ from the vertices $\Gamma o$ by joining two vertices by an edge if their distance in $G$ is at most $2r + 1$. Restriction of the elements of $\Gamma$ to $G'$ yields a subgroup $\Gamma'\subseteq \Aut(G)$. Then $G'$ is connected, $\Gamma'$ acts transitively on $G'$, and we have the following equivalences:         \begin{enumerate}
        \item  $G$ is $\w_\Gamma$-amenable if and only if $G'$ is $\w_{\Gamma'}$-amenable;
        \item $\Gamma$ is amenable if and only if $\Gamma'$ is amenable;
        \item $\Gamma$ is unimodular if and only if $\Gamma'$ is unimodular.      
    \end{enumerate} 
\end{lem}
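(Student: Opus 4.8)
\textbf{Proof plan for Lemma~\ref{lem:quasi-to-trans}.}

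The plan is to carry out the construction explicitly and verify the three equivalences by relating the weight functions $\w_\Gamma$ and $\w_{\Gamma'}$ and the isoperimetric profiles of $G$ and $G'$. First I would check the easy structural facts: $G'$ is connected because $G$ is connected and every vertex of $G$ lies within distance $r$ of $\Gamma o$, so a path in $G$ between two vertices of $\Gamma o$ can be replaced, by detours of length $\le r$ to the nearest orbit vertex, with a sequence of orbit vertices any two consecutive of which are at $G$-distance $\le 2r+1$; hence adjacent in $G'$. That each $\gamma\in\Gamma$ restricts to an automorphism of $G'$ is immediate since $\gamma$ preserves $\Gamma o$ setwise and preserves $G$-distance; and $\Gamma'$ acts transitively on $V(G')=\Gamma o$ because $\Gamma$ acts transitively on its own orbit. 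One should note the restriction map $\Gamma\to\Gamma'$ is continuous and $\Gamma'$ is closed in $\Aut(G')$.

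Next, the key point for (1) and (3): the stabilizers are comparable. For $v\in\Gamma o$, write $\Gamma_v$ for its stabilizer in $\Gamma$ and $\Gamma'_v$ for its stabilizer in $\Gamma'$. Since the $\Gamma$-action on $G'$ is through restriction, $\Gamma'_v$ is the image of $\Gamma_v$ under restriction; the kernel $N$ of $\Gamma\to\Gamma'$ consists of automorphisms of $G$ fixing every vertex of $\Gamma o$, hence fixing every vertex within distance $r$ of $\Gamma o$ pointwise — that is, all of $V$ — so $N$ is trivial and restriction is injective. Thus $\Gamma\cong\Gamma'$ as topological groups (a continuous bijective homomorphism of locally compact second-countable groups with closed image is a topological isomorphism), which immediately gives (2) (amenability is a group-isomorphism invariant) and (3) (unimodularity likewise, or directly: $\w_{\Gamma'}^o(v)=m'(\Gamma'_v)/m'(\Gamma'_o)$ and under the isomorphism $m'$ pulls back to a left Haar measure on $\Gamma$, so $\w_{\Gamma'}=\w_\Gamma$ on $\Gamma o\times\Gamma o$). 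In fact this observation does more: since the relative weight functions of $\Gamma$ on $G$ and of $\Gamma'$ on $G'$ agree on $\Gamma o$, the induced "absolute" weights $\w^o$ agree on $\Gamma o$ as well.

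It remains to prove (1), the equivalence of $\w_\Gamma$-amenability of $G$ and $\w_{\Gamma'}$-amenability of $G'$; given the previous paragraph this is purely an isoperimetric comparison of $G$ and $G'$ with respect to the \emph{same} weight $\w^o$ on the common vertex set $\Gamma o$ (and the weights of the non-orbit vertices of $G$, which are bounded by a constant multiple of the weight of any orbit vertex within distance $r$ since $\w^o$ is a $\Gamma$-invariant cocycle and there are finitely many orbits). Given a finite $F'\subseteq V(G')$ with small $\w^o(\partial_{V(G')}F')/\w^o(F')$, I would let $F\subseteq V(G)$ be the set of vertices of $G$ within distance $r$ of $F'$; then $\w^o(F)$ is comparable to $\w^o(F')$ (up to a factor depending only on $r$, the maximum degree, and the finitely many weight ratios), and $\partial_{V(G)}F$ is contained in the set of $G$-vertices within distance $r+1$ of $\partial_{V(G')}F'$ — wait, one must be slightly careful here — so $\w^o(\partial_{V(G)}F)$ is controlled by $\w^o(\partial_{V(G')}F')$ up to a similar factor, yielding $\Phi^{\w}_V(G)=0$. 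Conversely, given finite $F\subseteq V(G)$ with small ratio, take $F'=F\cap\Gamma o$ (nonempty and of comparable weight since every vertex of $F$ is within distance $r$ of $\Gamma o$ and the ambient weighted balls are bounded); a $G'$-neighbor $v$ of $F'$ outside $F'$ is at $G$-distance $\le 2r+1$ from $F$, and chasing a shortest such path one finds a vertex of $\partial_{V(G)}F$ within bounded distance of $v$, so $\w^o(\partial_{V(G')}F')$ is bounded by a constant times $\w^o(N_{2r+1}(\partial_{V(G)}F))$, which is a bounded multiple of $\w^o(\partial_{V(G)}F)$. Both directions contract the infimum to $0$. The main obstacle I anticipate is the bookkeeping in these boundary comparisons: one must ensure that "fattening" a set by a bounded radius inflates the weighted boundary only by a bounded factor, which is where one uses both local finiteness of $G$ and the fact (from quasi-transitivity) that the set of weight ratios $\{\w^o(x)/\w^o(y): x\sim y\}$ is finite, so weighted balls of radius $\le 2r+1$ have weight at most a uniform constant times the weight of their center. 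None of this requires new ideas beyond what is already in \cite{BLPS99inv}, so I would keep the argument brief, citing \cite[Lemma 3.10]{BLPS99inv} for the routine estimates.
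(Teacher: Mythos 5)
Your overall strategy is the right one and matches the approach in the cited source: analyze the restriction homomorphism $\rho:\Gamma\to\Gamma'$ and its effect on stabilizers/Haar measure to get (2) and (3), and then prove (1) by a direct isoperimetric comparison on the common vertex set $\Gamma o$. However, there is a concrete error in the group-theoretic step: it is \emph{not} true that an automorphism of $G$ fixing every vertex of $\Gamma o$ must fix all of $V$. Fixing $\Gamma o$ pointwise only shows that $\gamma$ permutes each set $\{x: d(x,\Gamma o)\le r\}$ preserving distances to all orbit vertices; it does not pin down the non-orbit vertices. A simple counterexample: take $G$ to be $\bZ$ with two pendant leaves attached to each integer, $\Gamma=\Aut(G)$ (quasi-transitive with two orbits), $o=0$. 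Then $r=1$ and swapping the two leaves at a single integer is a nontrivial element of the kernel $N$. So $\rho$ is not injective and $\Gamma\not\cong\Gamma'$ in general. The conclusion nevertheless survives, but for a different reason you would need to state: $N$ is a \emph{closed subgroup of the compact stabilizer $\Gamma_o$}, hence compact and normal, so $\Gamma'\cong\Gamma/N$ with $N$ compact normal. Amenability of a locally compact group is equivalent to amenability of its quotient by a compact normal subgroup (compact groups are amenable and amenability is closed under extensions), and likewise unimodularity passes to and from such quotients (the modular function restricted to a compact subgroup is trivial, so it descends). Without invoking this, your claim "restriction is injective, hence $\Gamma\cong\Gamma'$" is simply false.

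Two smaller points. In the converse direction of (1) you take $F'=F\cap\Gamma o$ and assert it is nonempty; it can easily be empty (e.g.\ $F$ a single non-orbit vertex). The standard fix is to first replace $F$ by its $r$-neighborhood (which inflates weight and weighted boundary only by controlled factors, as you argue in the other direction), after which $F\cap\Gamma o\neq\eset$ automatically. Finally, you end by saying you would "cite \cite[Lemma 3.10]{BLPS99inv} for the routine estimates"; that is circular, since the lemma you are proving \emph{is} \cite[Lemma 3.10]{BLPS99inv} — those estimates are precisely what needs to be supplied (and you have in fact sketched them correctly; they just cannot be delegated to the statement being proved).
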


\noindent
The amenability of the automorphism group of a graph was shown to be also equivalent to the existence of an invariant percolation with high marginal probabilities and no infinite clusters. We present a slightly rephrased version of this result here.
\begin{thm}[{\cite[Theorem 5.1]{BLPS99inv}}]\label{thm:BLPShf}
Let $\Gamma$ be a closed subgroup of $\Aut(G)$ that acts quasi-transitively on $G$ and $\w$ be the induced relative weight function as in \eqref{def:haarweights}. Then $G$ is $\w$-amenable if and only if for all $\ga<1$, there is a $\Gamma$-invariant
site percolation $\mathbf{P}_{\ga}$ on $G$ with $\mathbf{P}_{\ga}(x\in\go)>\ga$ for all $x$ and no infinite components. Analogous statement holds for bond percolation.
\end{thm}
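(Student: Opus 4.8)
The statement to prove is Theorem~\ref{thm:BLPShf}, which is \cite[Theorem 5.1]{BLPS99inv} in rephrased form: for $\Gamma$ closed and quasi-transitive on $G$, the graph $G$ is $\w$-amenable if and only if for every $\ga<1$ there is a $\Gamma$-invariant site percolation $\mathbf{P}_\ga$ with $\mathbf{P}_\ga(x\in\go)>\ga$ for all $x$ and no infinite cluster (and the analogous bond version).

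\textbf{Reduction to the transitive case.} The plan is to first invoke Lemma~\ref{lem:quasi-to-trans} to pass from the quasi-transitive group $\Gamma$ acting on $G$ to the auxiliary transitive pair $(G',\Gamma')$. One must check that the existence of an invariant site percolation with high marginals and no infinite cluster transfers both ways between $(G,\Gamma)$ and $(G',\Gamma')$: from $G'$ to $G$ one pushes the configuration forward along the inclusion $\Gamma' o \hookrightarrow V$ and then adds back the other orbits with an independent biased coin (this only shifts marginals by a bounded amount, absorbed by taking $\ga$ closer to $1$ on $G'$); from $G$ to $G'$ one intersects with $\Gamma' o$ and uses that clusters of the restricted configuration are contained in clusters of the original. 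Since $\w_\Gamma$-amenability of $G$ is equivalent to $\w_{\Gamma'}$-amenability of $G'$ by the same lemma, it suffices to prove the theorem when $\Gamma$ is transitive. Also, since invariant site percolation with no infinite cluster and high marginals and the corresponding bond statement imply each other by standard line-graph / subdivision arguments with bounded degree, I would prove the site version and remark on the bond one.

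\textbf{The ``only if'' direction ($\w$-amenable $\Rightarrow$ good percolation exists).} Fix a reference vertex $o$ and $\ga<1$. By $\w$-amenability \eqref{eq:wCheeger}, choose a finite $F\subseteq V$ with $\w^o(\partial_V F)/\w^o(F)$ as small as desired. The idea is to use a $\Gamma$-invariant random tiling-like object: place a random $\Gamma$-equivariant collection of translates of $F$ (e.g.\ via a factor of an i.i.d.\ process, or via the Haar-measure averaging used in \cite{BLPS99inv}) so that each vertex is covered with probability close to $1$ weighted appropriately, delete the boundary vertices of each translate, and retain the interiors. The retained set $\go$ is $\Gamma$-invariant, has only finite clusters (each contained in a translate of $F$), and — here the tilted mass transport principle, Theorem~\ref{thm:TMTP}, enters — the probability that a given vertex $x$ lies in $\go$ can be bounded below in terms of $1 - \w^o(\partial_V F)/\w^o(F)$ using the cocycle to account for the weights of the covering translates; choosing $F$ suitably gives $\mathbf{P}(x\in\go)>\ga$ for all $x$ by transitivity. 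This is essentially the argument in \cite{BLPS99inv} and the main care needed is the TMTP bookkeeping for the marginals.

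\textbf{The ``if'' direction ($\Rightarrow$ $\w$-amenable).} Suppose for every $\ga<1$ there is such a $\mathbf{P}_\ga$. Apply the tilted mass transport principle to the function $f_\ga(x,y) = \mathbf{P}_\ga\big(x,y \text{ in the same cluster of }\go,\ y\in\partial_V(\text{that cluster})\big)$, or more precisely transport a unit mass from each vertex of a finite cluster to (a weighted version of) the external boundary of that cluster. Since all clusters are finite a.s., the transport is well-defined and one side of TMTP computes $\mathbf{E}[\w^o(\text{boundary of }x\text{'s cluster})]$ while the other side is controlled by $\mathbf{P}_\ga(x\notin\go)+\mathbf{P}_\ga(x\in\go)$ times a boundary term; rearranging yields $\Phi^{\w}_V(G) \le C(1-\ga)$ for an absolute constant (using that $x\notin\go$ with probability $<1-\ga$ and the degree bound), and letting $\ga\to1$ gives $\Phi^{\w}_V(G)=0$. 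The main obstacle, and the step I would be most careful about, is this final mass-transport estimate: one must transport mass so that the weighted boundary of each finite cluster appears with the right multiplicity, and the tilt factor $\w^y(z)$ in Theorem~\ref{thm:TMTP} must be handled so that it produces exactly the ratio $\w^o(\partial_V F)/\w^o(F)$ in the limit rather than a distorted quantity — this is where the nonunimodular weighting genuinely differs from the classical unimodular argument and where following \cite[Theorem 5.1]{BLPS99inv} closely is essential.
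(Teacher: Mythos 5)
The paper does not prove this theorem itself --- it is cited verbatim from \cite[Theorem 5.1]{BLPS99inv}. The closest the paper comes to a proof is the ``alternative proof'' of the equivalence ($\w$-amenable $\Leftrightarrow$ hyperfinite, which modulo the standard stochastic-monotonicity coupling is the content of Theorem~\ref{thm:BLPShf}) given inside Theorem~\ref{thm:levelamen-hf}. Your proposal takes the original BLPS route instead. Your ``if'' direction matches the paper's treatment closely: the mass transport from each vertex of a finite/light cluster to the weighted external boundary of that cluster is exactly what the paper implements in Theorem~\ref{thm:site-threshold} (and, in another form, Theorem~\ref{thm:Ewh{D}inv}), and your sketch is fine. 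Where you diverge substantively from the paper is the ``only if'' direction. The paper's alternative route is to pass from $\w$-amenability to level-amenability (the slicing/averaging argument using transitivity and TMTP), then decompose $G$ via Bernoulli level-percolation into slices that are amenable \emph{unimodular} quasi-transitive graphs, and apply the classical unimodular version of the BLPS result to each slice to obtain hyperfiniteness. This confines the delicate invariant-percolation construction to the unimodular case, which is standard. You, by contrast, attempt to construct the invariant percolation directly in the nonunimodular setting via a random tiling by translates of a weighted Følner set.

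That is where the main caveat of your sketch lives, and you correctly flag it but do not resolve it. Choosing a finite $F\subseteq V$ with small $\w^o(\partial_V F)/\w^o(F)$ gives you a Følner set for the weighted vertex-expansion on $G$, which is not the same as a Følner set in the group $\Gamma$. The BLPS construction actually goes through amenability of $\Gamma$ (the content of Theorem~\ref{thm:BLPSw-amen}), chooses a Følner sequence in $\Gamma$, and produces the invariant percolation by Haar-measure averaging; the modular function $\Delta(\gamma)$ enters precisely in the marginal estimate, which is why ``each vertex is covered with probability close to $1$ weighted appropriately'' is a nontrivial claim, not a bookkeeping detail. If you want to follow your route, you should first translate graph-level $\w$-Følner sets into $\Gamma$-Følner sets (this is essentially what \cite[Theorem 3.9]{BLPS99inv} = Theorem~\ref{thm:BLPSw-amen} does) and then run the averaging. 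As written, the step from a small weighted boundary ratio in $G$ to a high marginal for a $\Gamma$-invariant finite-cluster percolation is a genuine gap. Both approaches are legitimate; the paper's slice reduction is arguably cleaner in the nonunimodular setting precisely because it sidesteps this averaging.
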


In the context of nonunimodular transitive graphs one may wonder what happens if we replace finite (resp.~infinite) clusters with light (resp.~heavy) ones. In Definition~\ref{def:wamen} one can replace the condition $``\abs{F}<\infty"$ by $``\w^o(F)<\infty"$ (see Lemma~\ref{lem:finite->light}), and ``no infinite components" by ``no heavy components" in the conclusion of Theorem~\ref{thm:BLPShf}. Moreover, by a standard trick one can make the percolation processes $\mathbf{P}_\ga$ stochastically increasing in $\ga$, making the property in Theorem~\ref{thm:BLPShf} be the same as hyperfiniteness (see Definition \ref{def:hyperfinite}).

We now present several examples of nonunimodular graphs; for more examples see \cite{Timar06nonu,Pengfei18,Hutchcroft20} and references therein.

\begin{ex}
    Consider the Cayley graph of a countable finitely generated group, whose automorphism group contains a nonunimodular subgroup. One can modify this graph to ensure that the full automorphism group is nonunimodular by adding decorations (such as edges) to break undesired symmetries.  A specific case of a graph that is constructed in such a fashion is the grandparent graph presented in Example~\ref{ex:gp}.
\end{ex}

\begin{ex}[Nonunimodular trees $T_{r,s}$]\label{ex:nonuni-tree}
For any $r,s\in \bN$ such that $1\le r<s$. Consider a $r+s$ regular tree and an orientation of edges such that every vertex has exactly $r$ outgoing edges and $s$ incoming edges. Let $\Gamma$ be the subgroup of the automorphisms of the tree that preserves the orientation of the edges. It is easy to see that each vertex $x\in V$ has $r$ neighbors of weight $\w^x(\cdot)=s/r$ and $s$ neighbors of weight $\w^x(\cdot)=r/s$. A simple calculation that such a tree is weighted-amenable if and only if $r=1$.
\end{ex}

\begin{ex}[The grandparent graph $\mathrm{GP}(k)$]\label{ex:gp}
    The grandparent graph $\mathrm{GP}(k)$, $k\geq 2$, introduced by Trofimov \cite{Trofimov}, is constructed as follows. Consider $T_{1,k}$ from the previous example and connect every vertex to it's grandparent, \ie\ to the vertex of distance $2$ using outgoing edges. Notice that neighborhood of a vertex $x$ contains of a single parent ($\w^x(\cdot)=k$), a single grandparent ($\w^x(\cdot)=k^2$), $k$ children ($\w^x(\cdot)=1/k$), and $k^2$ grandchildren ($\w^x(\cdot)=1/k^2$). Weighted-amenability of $\mathrm{GP}(k)$ for any $k\geq 2$ follows from that of $T_{1,k}$.
\end{ex}

\begin{ex}[Diestel–-Leader  graph $\DL(k,\ell)$]\label{ex:DL}
The Diestel–-Leader graph $\DL(k, \ell)$, which was first introduced in \cite{DL01}, is constructed in the following way. Let $T_1:=T_{1,k}$ and $T_2:=T_{1,\ell}$ from Example~\ref{ex:nonuni-tree}. Align the levels of the tress in the opposite directions, \ie\ if $x\in T_1$ is the $i$-th level (with respect to an arbitrarily chosen root) then its children are in the $i+1$-th level; on the other hand for a vertex $y\in T_2$ is the $i$-th level then children are in the $i-1$-th level. Denote by $\level_{T_i}(x)$ the level of a vertex $x$ in $T_i$. Define a graph $\DL(k,\ell)$ on the vertex set
\[
\{(x, y) \in V(T_1) \times V (T_2): \level_{T_1}(x) = \level_{T_2}(y)\}. 
\]
with the edge set
\[
\left((x,y),(x',y')\right)\in E \Leftrightarrow (x,x')\in E(T_1) \textnormal{ and } (y,y')\in E(T_2).
\]
It is easy to check that $\DL(k,\ell)$ is nonunimodular whenever $k\neq\ell$. Moreover, each vertex $v$ has exactly $k$ neighbors such that $\w^x(\cdot)=\ell/k$ and $\ell$ neighbors such that $\w^x(\cdot)=k/\ell$. Similar calculations to those for the grandparent graph yield that the $\DL(k,\ell)$ graph is $\w$-amenable.
\end{ex}

\begin{ex}[Free product with nonunimodular graphs]\label{ex:GPxG}
Consider a graph that is the free product of a nonunimodular graph and a locally finite transitive graph $G$. It is clearly nonunimodular and it is not difficult to see that (as long as $G$ contains an edge) the weight of the boundary of any finite subgraph of such a product is at least that of the subgraph, yielding $\w$-nonamenability. Notice that taking the free product of two nonunimodular graphs where
the weights in the two are not the powers of the same constant, \eg\ $\mathrm{GP}(3)* \mathrm{GP}(4)$, results in a
nonunimodular graph where the
set of weights $\{3^i 4^j:\, i,j\in \bZ\}$ is dense in $\bR$.
\end{ex}

\begin{ex}[Cartesian product of $\mathrm{GP}(k)$ and $\bT_{d}$]\label{ex:CartprodTd}
Let $G$ be the Cartesian product of $\mathrm{GP}(k)$ and $\bT_d$. This graph is nonunimodular and if $d>2$, it is $\w$-nonamenable. Moreover, results from \cite{Hutchcroft20,Haggstrom99} together with simple calculation yield that, for $d$ large enough, all four phases from Figure~\ref{Fig:phases} are non-trivial for this graph.
\end{ex}

For all of the examples above the level sets are disconnected. Here by the \textbf{level} of $x\in V$ we mean a set of vertices $y$ such that $\w^x(y)=1$. Moreover, even though some of these graphs are one ended, deleting a level of vertices splits the graph into infinitely many components. The following example, to our knowledge, is the only construction of nonunimodular graphs with connected levels present in the literature. Coming up with other such graphs would be of interest.

\begin{ex}[Tower of ``self-similar" graphs]\label{ex:tower}
This construction was introduced in \cite{Timar06nonu}.
Let $\{G_i\}_{i\in \bZ}$ be copies of $\mathrm{DL}(2,2)$. Partition the vertices of $G_0$ into ``bags" of four vertices as follows. Let $T$ and $T'$ be the 3-regular trees used in the construction of $G_0$, as in Example~\ref{ex:DL}. If $x_1$ and $x_2$ are siblings in $T$
and $y_1$ and $y_2$ are siblings in $T'$, then let a bag be $\{(x_i,y_j)): i,j\in \{1, 2\}\}.$ Note that the bags are classes of imprimitivity for the automorphism group of $G_0$, \ie\ the partition to bags is preserved by any automorphism of $G_0$. Define the \textit{bag} graph $G_0'$ on the bags of $G_2$ by placing an edge between two bags if some element in one of them is adjacent (in $G_0$) to an element in the other. Notice that $G_0'$ is isomorphic to $\mathrm{DL}(2,2)$. Fix an arbitrary isomorphism $\phi:G_1 \to G_0'$ and connect a vertex $u\in G_1$ to a
vertex $v\in G_0'$ if the bag $\phi(u)$ contains $v$. Defining the edges between each $G_i$ and $G_{i+1}$ in the same way finishes the construction. This graph is indeed nonunimodular as for any $x\in G_i$ and $y\in G_{i+1}$ the weight of $y$ with respect to $x$ is $\w^x(y)=4$. It is easy to see that the resulting graph is $\w$-amenable. See \cite[Section 3.4]{Timar06nonu} for more details.

\end{ex}

\begin{ex}[Free product with the tower of ``self-similar" graphs]\label{ex:freeprodtower}
Consider a free product or direct product of the graph constructed in Example~\ref{ex:tower} with any nonamenable unimodular graph. Such a graph is $\w$-nonamenable. Then each level induces a connected {\it nonamenable} subgraph.
\end{ex}

\subsection{Percolation thresholds and weighted degrees}\label{sec:thresholds}

It is well-known that \textit{if an invariant percolation on a nonamenable unimodular transitive graphs only has finite clusters then the expected degree is bounded away from the degree in the graph} (see \cite[Theorem 8.16]{LyonsBook}), and the gap between the two can be expressed in terms of the isoperimetric constant. In the nonunimodular case analogous statements hold for the weighted degree, for which there are several natural candidates. In \cite[page 20]{BLPS99inv} the authors considered the following one (which they called $\Gamma$-degree):
for a vertex $x$ in a subgraph $F\subseteq G$ let the \textbf{weighted degree of $x$ in $F$} be
\begin{equation}\label{def:BLPS_D_F}
     \wh{D}^{\w}(x,F):=\sum_{z\in F\atop (x,z)\in F}\w^x(z).
\end{equation}

\begin{lem}
    Let $\Gamma$ be a closed subgroup of $\Aut(G)$ that acts transitively on $G$ and $\w$ be the induced relative weight function as in \eqref{def:haarweights}. Suppose $d$ is the degree in $G$. Then
    \begin{equation}\label{eq:wD=d}
    \wh{D}^{\w}(o,G)=\sum_{z\in V\atop o\sim z}\w^o(z)=d.
    \end{equation}
\end{lem}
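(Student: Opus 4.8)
The plan is to deduce both equalities from the Tilted Mass Transport Principle (Theorem~\ref{thm:TMTP}). The first equality is a triviality: taking $F=G$ in the definition \eqref{def:BLPS_D_F}, every neighbour $z$ of $o$ satisfies $(o,z)\in G$, so by definition $\wh{D}^{\w}(o,G)=\sum_{z\in V,\,o\sim z}\w^o(z)$. All the content is therefore in the identity $\sum_{z\sim o}\w^o(z)=d$.

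For that, I would apply Theorem~\ref{thm:TMTP} to the mass transport function $f(x,y):=\ind_{\{x\sim y\}}$. This $f$ is $\bR^+$-valued, and since $\Gamma$ consists of graph automorphisms it preserves the adjacency relation, so $f$ is invariant under the diagonal $\Gamma$-action; local finiteness of $G$ makes every sum below finite. Taking $x=y=o$ in the conclusion of Theorem~\ref{thm:TMTP} yields
\[
\sum_{z\in V}\ind_{\{o\sim z\}}\;=\;\sum_{z\in V}\ind_{\{z\sim o\}}\,\w^o(z).
\]
The left-hand side counts the neighbours of $o$, which by transitivity equals the common degree $d$ of $G$; the right-hand side is exactly $\sum_{z\sim o}\w^o(z)=\wh{D}^{\w}(o,G)$. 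Combining with the first equality gives \eqref{eq:wD=d}.

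I do not expect any genuine obstacle here: the only ``idea'' is recognizing that the adjacency indicator is the transport rule that converts the mass-transport identity into the desired statement. If one wanted a bare-hands argument instead, one could expand the definition \eqref{def:haarweights} of $\w^o(z)$ and double-count the oriented edges incident to the $\Gamma$-orbit of $o$; but this is precisely the bookkeeping that the Tilted Mass Transport Principle performs for us, so invoking Theorem~\ref{thm:TMTP} is the cleanest route.
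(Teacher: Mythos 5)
Your proof is correct, and it is in fact tighter than the paper's. You apply the Tilted Mass Transport Principle once, directly, to the adjacency indicator $f(x,y)=\ind_{\{x\sim y\}}$ with $x=y=o$, which immediately collapses to the claimed identity; the checks you mention (diagonal $\Gamma$-invariance from $\Gamma\subseteq\Aut(G)$, finiteness from local finiteness) are exactly the ones needed. The paper's proof instead refines by weight class: it sets $N_\delta:=\abs{\{z\sim o : \w^o(z)=\delta\}}$, first proves the pairing identity $\delta N_\delta = N_{1/\delta}$ via TMTP applied to the indicator of adjacency-with-prescribed-weight-ratio, and then recombines the terms by pairing $\delta$ with $1/\delta$. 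Both routes are TMTP in disguise, but yours is a single coarse transport while the paper's is a family of finer transports plus a rearrangement. Your version is the more economical; the paper's version has the side benefit of isolating the symmetry $\delta N_\delta = N_{1/\delta}$ as a reusable fact (indeed it is reused in Lemmas~\ref{lem:weightdegineq} and \ref{lem:symwalk}). No gaps.
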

\begin{proof}
    Let $\delta\in \bR^+$ such that there is a vertex $z\in V$ that is a neighbor of $o$ and $\w^o(z)=\delta$. Define
    \begin{align*}
        N_{\delta}=\abs{\{z\sim o \mid \w^o(z)=\delta\}}
    \end{align*}
    and notice that we have $\delta N_\delta=N_{1/\delta}$, which follows from the TMTP where each $x$ send unit mass to every element of $N_\delta$.
    Then
    \[
    \wh{D}^{\w}(o,G)=\sum_{\delta\in\bR^+}\delta N_\delta=1\cdot N_1+\sum_{\delta>1}\delta N_\delta+\frac{1}{\delta} N_{1/\delta}=N_1+\sum_{\delta>1}N_{1/\delta}+N_{\delta}=d.\tag*{\qedhere}
    \]
\end{proof}

The choice of weighted degree as in \eqref{def:BLPS_D_F} leads to the following version of the weighted edge-isoperimetric constant:
\begin{equation}\label{def:BLPS_edgeiso}
    \iota_E^{\,\w}(G)=\inf_{F\subseteq V\atop \abs{F}<\infty}\frac{1}{\w^o(F)}\sum_{x\in F\atop (x,z)\in \partial_E F}\w^o(x),
\end{equation}
which is not dependent on the choice of $o$.

\begin{lem}
    Let $\Gamma$ be a closed subgroup of $\Aut(G)$ that acts transitively on $G$ and $\w$ be the induced relative weight function as in \eqref{def:haarweights}. Suppose $d$ is the degree in $G$. Then
    \begin{equation}
        \frac{1}{d}\Phi_V^{\w}(G)\le\iota_E^{\,\w}(G)\le d \Phi_V^{\w}(G).
    \end{equation}
\end{lem}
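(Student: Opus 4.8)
The plan is to prove both inequalities from a single pointwise estimate valid for every finite $F\subseteq V$, and then pass to the infimum. The one ingredient beyond the definitions is the elementary fact that every edge-weight lies in $[1/d,d]$: for an edge $xy$, applying \eqref{eq:wD=d} with reference vertex $x$ gives $\sum_{z\sim x}\w^x(z)=d$, so in particular $\w^x(y)\le d$, and the cocycle identity $\w^x(y)\,\w^y(x)=1$ then gives $\w^y(x)\ge 1/d$.

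Fix a nonempty finite $F\subseteq V$. Each edge of $\partial_E F$ has exactly one endpoint in $\partial_V F$, so grouping the boundary edges by that endpoint and using the cocycle relation $\w^o(x)=\w^y(x)\,\w^o(y)$ for $x\sim y$, the numerator of $\iota_E^{\,\w}(G)$ rewrites as
\[
\sum_{x\in F,\ (x,z)\in\partial_E F}\w^o(x)\;=\;\sum_{y\in\partial_V F}\w^o(y)\sum_{x\in F,\ x\sim y}\w^y(x).
\]
For the upper bound I would bound the inner sum by $\sum_{z\sim y}\w^y(z)=d$ (this is \eqref{eq:wD=d} applied at $y$), so the right-hand side is at most $d\,\w^o(\partial_V F)$. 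For the lower bound, since $y\in\partial_V F$ there is at least one neighbour $x\in F$ of $y$, and keeping just that term together with $\w^y(x)\ge 1/d$ shows the right-hand side is at least $\tfrac1d\,\w^o(\partial_V F)$. Dividing by $\w^o(F)$ gives
\[
\tfrac1d\cdot\frac{\w^o(\partial_V F)}{\w^o(F)}\;\le\;\frac{1}{\w^o(F)}\sum_{x\in F,\ (x,z)\in\partial_E F}\w^o(x)\;\le\;d\cdot\frac{\w^o(\partial_V F)}{\w^o(F)},
\]
and taking the infimum over all nonempty finite $F$ — the same index set on all three sides — yields $\tfrac1d\Phi_V^{\w}(G)\le\iota_E^{\,\w}(G)\le d\,\Phi_V^{\w}(G)$. (As already remarked, none of the three quantities depends on the reference vertex $o$.)

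There is no serious obstacle here; the only step that is not pure bookkeeping is the uniform bound $\w^y(x)\ge 1/d$ on individual edge-weights, which is exactly what keeps the weighted edge-boundary from being arbitrarily small relative to the weighted vertex-boundary, despite the vertex weights themselves being unbounded.
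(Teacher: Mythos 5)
Your proof is correct and follows essentially the same route as the paper's: both decompose the edge-boundary sum by the endpoint in $\partial_V F$, convert $\w^o(x)$ to $\w^o(y)$ via the cocycle identity, and then invoke the uniform bounds $1/d\le\w^y(x)\le d$ on neighboring weights together with the fact that each boundary vertex has at least one (and at most $d$) neighbors in $F$. The only cosmetic difference is that you derive $\w^x(y)\le d$ from \eqref{eq:wD=d} while the paper reads it off directly from \eqref{def:haarweights}; otherwise the arguments coincide.
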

\begin{proof}
    Notice that from \eqref{def:haarweights} for any pair of adjacent vertices $x$ and $z$ we have that $d^{-1}\le w^{x}(z)\le d$. Thus for any finite set $F$ we have that
    \begin{align*}
        \sum_{x\in F\atop (x,z)\in \partial_E F}\w^o(x)\ge\sum_{x\in F\atop (x,z)\in \partial_E F}\w^o(x)\frac{\w^{x}(z)}{d}\ge\frac{1}{d}\sum_{z\in \partial_V F}\w^{o}(z).
    \end{align*}
    Similarly,
    \begin{align*}
        \sum_{x\in F\atop (x,z)\in \partial_E F}\w^o(x)\le 
        \sum_{z\in \partial_V F}\w^o(x)\w^{x}(z) \w^{z}(x) \cdot \abs{\{x\in F\mid x\sim z\}}
        \le d\sum_{z\in \partial_V F}\w^{o}(z),
    \end{align*}
    because $\w^{z}(x) \cdot \abs{\{x\in F\mid x\sim z\}}\leq \wh{D}^{\w}(x,G)=d$.
\end{proof}
We now present nonunimodular versions of \cite[Theorem 8.16]{LyonsBook} that were essentially proved in \cite[Theorem 4.4]{BLPS99inv}. The only novelty in the result below that it is stated for light (resp.~heavy) clusters instead of finite (resp.~infinite) ones, but the proofs remain unchanged.

\begin{thm}\label{thm:Ewh{D}inv}
     Let $\Gamma$ be a closed subgroup of $\Aut(G)$ that acts transitively on $G$ and $\w$ be the induced relative weight function as in \eqref{def:haarweights}. Suppose that $\mathbf{P}$ is a $\Gamma$-invariant bond percolation in $G$. If $\mathbf{P}$-a.s.\ all clusters are light, then
     \begin{equation}\label{eq:EwhDinv}
     \mathbf{E} \wh{D}^{\w}(x,\go) \leq d-\iota_E^{\,\w}(G),  
     \end{equation}
     where $d$ is the degree in $G$.
\end{thm}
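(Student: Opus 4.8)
The plan is to run the standard mass-transport argument for "invariant percolation with small clusters forces a spectral/isoperimetric gap in the expected degree," but carried out in the tilted (weighted) setting. Since all clusters of $\go$ are light $\mathbf{P}$-a.s., each cluster $C$ is a finite-weight subgraph, so the weighted edge-isoperimetric constant applies to it: $\sum_{x\in C,\,(x,z)\in\partial_E C}\w^o(x)\ge \iota_E^{\,\w}(G)\,\w^o(C)$, where $\partial_E C$ is taken inside $G$. Rearranging and summing over the edges internal to $C$, one gets that for each light cluster $C$,
\[
\sum_{x\in C}\w^o(x)\,\wh D^{\w}(x,\go)
=\sum_{x\in C}\w^o(x)\bigl(d-\#\{z\notin C:\ z\sim x\}\text{-weighted terms}\bigr)
\le (d-\iota_E^{\,\w}(G))\sum_{x\in C}\w^o(x),
\]
using \eqref{eq:wD=d} vertex-by-vertex and the isoperimetric bound on the boundary contribution. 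The point is to convert this per-cluster inequality (which lives on finite objects and is therefore an honest finite sum) into the single-vertex statement \eqref{eq:EwhDinv} via an appropriate mass transport.

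\textbf{Key steps.} First, fix a reference vertex $o$ and work with the absolute weight $\w^o(\cdot)$. Define a transport: each vertex $x$ distributes, for each cluster $C\ni x$ — i.e.\ its own cluster $\go(x)$ — a mass to each $y\in \go(x)$ proportional to $\w^o(y)$ but normalized so the per-cluster bookkeeping above is exactly reproduced; concretely set
\[
f(x,y)=\ind[x\sim y,\ x\leftrightarrow y\text{ in }\go]\cdot \w^o(x)
\]
so that $\sum_y f(x,y)=\w^o(x)\wh D^{\w}(x,\go)$ is \emph{not quite} what we want, so instead I would take the cleaner route: apply the TMTP (Theorem~\ref{thm:TMTP}) to the $\Gamma$-diagonally-invariant function $f(x,y):=\ind[x\leftrightarrow y \text{ in }\go]\,g(y)$ for a suitable bounded $g$, after first reducing to the case where $\go$ has been sprinkled/truncated so that every cluster is finite (possible since light clusters, being of finite weight, can be exhausted by finite sub-clusters along an invariant sequence — or one argues directly that a light cluster has a well-defined finite weighted size and uses dominated convergence). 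Then the per-cluster inequality, which reads $\w^o(\go(x))^{-1}\sum_{y\in\go(x)}\w^o(y)\wh D^{\w}(y,\go)\le d-\iota_E^{\,\w}(G)$, combined with the identity $\mathbf{E}\wh D^{\w}(o,\go)=\mathbf{E}\bigl[\w^o(\go(o))^{-1}\sum_{y\in\go(o)}\w^o(y)\wh D^{\w}(y,\go)\bigr]$ — which is exactly what the TMTP/weighted mass-transport gives, since $\w^o$ weights the cluster the right way — yields \eqref{eq:EwhDinv}. The verification that $f$ is $\Gamma$-diagonally invariant uses that $\go$ has a $\Gamma$-invariant law and that $\wh D^{\w}(\cdot,\go)$ transforms as a cocycle-weighted quantity, consistent with \eqref{eq:invcoc}.

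\textbf{Main obstacle.} The genuinely delicate point is the passage from finite clusters to light-but-possibly-infinite clusters: the clean per-cluster averaging identity "$\mathbf{E}\,\wh D^{\w}(o,\go)$ equals the expected $\w^o$-average of $\wh D^{\w}$ over $\go(o)$" is a mass-transport statement that is transparent for finite clusters but needs justification (an invariant exhaustion, or a direct summability argument using that $\w^o(\go(o))<\infty$ a.s.\ and $\wh D^{\w}\le d$) when the cluster is infinite of finite weight. I expect this to be the main technical content; once it is in place, the isoperimetric inequality $\sum_{x\in C,\,\partial_E}\w^o(x)\ge \iota_E^{\,\w}(G)\w^o(C)$ and the degree identity \eqref{eq:wD=d} combine routinely. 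Since the excerpt explicitly says "the proofs remain unchanged" from \cite[Theorem 4.4]{BLPS99inv}, I would in practice cite that argument and only remark that replacing "finite" by "light" throughout is harmless precisely because a light cluster still has finite total $\w^o$-weight, which is all the mass-transport bookkeeping uses.
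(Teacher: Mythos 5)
Your proposal is correct and follows essentially the same route as the paper's proof: a tilted mass transport in which each vertex distributes $\wh D^{\w}(x,\go)$ proportionally to the absolute weights $\w^o(y)$ over its (light) cluster, so that the expected received mass at a vertex becomes the $\w^o$-weighted average of $\wh D^{\w}$ over the cluster, which is in turn bounded by the supremum $\wh A^{\,\w}(G)=d-\iota_E^{\,\w}(G)$. One small remark: the "main obstacle" you flag is less delicate than you suggest — since $\wh D^{\w}\le d$ and $\w^o(y)/\w^o(C(x))\le 1$, the per-vertex out-mass is bounded by $d$ uniformly, so the TMTP applies directly without any sprinkling or truncation; the only point that genuinely needs a word is that $\wh A^{\,\w}(C)\le\sup_{F\text{ finite}}\wh A^{\,\w}(F)$ for light infinite $C$, which follows by exhausting $C$ by finite sets whose complementary weight tends to zero (the same device as Lemma~\ref{lem:finite->light}).
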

\begin{proof}
    For a subgraph $F\subseteq G$ define
    \begin{equation*}
        \wh{A}^{\,\w}(F):=\frac{1}{\w^o(F)}\sum_{x,y \in F \atop (x,y)\in F} \deg_F(x)\cdot \w^o(x){=\frac{1}{\w^o(F)}\sum_{(x,y) \in E(F)} \w^o(x)+\w^o(y)}
        \end{equation*}
        and
    \begin{equation*}
        \wh{A}^{\,\w}(G):=\sup_{F\subseteq G\atop \abs{F}<\infty} \wh{A}^{\,\w}(F).
    \end{equation*}
    Let $C(x)$ be the cluster containing vertex $x\in V$ and let $\ga_x:= \w^o(C(x))\in (0,\infty)$. Consider the following mass transport scheme: each vertex $x$ sends to every vertex $y$ in the same cluster (including itself) the mass of $\ga_x^{-1} \w^o(y) \wh{D}^{\w}(x,\go)$. In other words, $x$ redistributes a mass equal to its weighted-degree among all vertices in the same cluster proportional to their contribution to the total weight of the cluster. 

    Since $\sum_{y\in C(x)}\ga_x^{-1}\w^o(y)=1$ we have that expected sent-out mass from $x$ is equal to
        \[
        \mathbf{E} \wh{D}^{\w}(x,\go)=\mathbf{E}\sum_{y\in C(x)}\frac{\w^o(y)}{\ga_x}\wh{D}^{\w}(x,\go).
        \]
    On the other hand, the expected received mass at $x$ is given by 
        \begin{align*}\mathbf{E}\sum_{y\in C(x)}\frac{\w^o(x)}{\ga_x}\wh{D}^{\w}(y,\go)\w^x(y)
        &=\mathbf{E}\,\ga_x^{-1}\sum_{y\in C(x)}\w^o(y)\wh{D}^{\w}(y,\go)\\
        &=\mathbf{E}\,\ga_x^{-1}\sum_{y\in C(x)}\w^o(y)\sum_{z\in C(x)\atop (y,z)\in E(\go)}\w^y(z)\\
        &=\mathbf{E}\,\ga_x^{-1}\sum_{y\in C(x)}\sum_{z\in C(x)\atop (y,z)\in E(\go)}\w^o(z)=\mathbf{E} \wh{A}^{\,\w}(C(x)).
    \end{align*}
    From TMPT and the fact that $\iota_E^{\,\w}(G)+\wh{A}^{\w}(G)=d$ we conclude that
    \[
    \mathbf{E} \wh{D}^{\w}(x,\go)=\mathbf{E} \wh{A}^{\,\w}(C(x))\le \wh{A}^{\,\w}(G)=d-\iota_E^{\,\w}(G).\tag*{\qedhere}
    \]
\end{proof}
The previous theorem could be used to derive explicit upper bounds for $p_h$, which also implies part (ii) of Theorem~\ref{thm:contph}.

\begin{cor}\label{thm:ph_threshold}
     Let $\Gamma$ be a closed subgroup of $\Aut(G)$ that acts transitively on $G$ and $\w$ be the induced relative weight function as in \eqref{def:haarweights}. Suppose $\mathbf{P}$ is a $\Gamma$-invariant bond percolation on $G$. If 
     \[
     \mathbf{E} \wh{D}^{\w}(x,\go)>d-\iota_E^{\,\w}(G)
     \] then with positive probability there is a heavy cluster in $\go$. Moreover, with positive probability there is a cluster $C\subseteq \go$ such that $p_h(C,\gC)<1$.  
\end{cor}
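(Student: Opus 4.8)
The plan is to establish the two assertions separately. The first is exactly the contrapositive of Theorem~\ref{thm:Ewh{D}inv}: every cluster of $\go$ is either light or heavy, so if with probability one there were no heavy cluster, then $\mathbf{P}$-a.s.\ all clusters would be light, and \eqref{eq:EwhDinv} would give $\mathbf{E}\wh{D}^{\w}(x,\go)\le d-\iota_E^{\,\w}(G)$, contradicting the hypothesis. Hence a heavy cluster occurs with positive probability.

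For the ``moreover'' part I would run the same dichotomy after an independent thinning. First note that $\wh{D}^{\w}(x,\go)\le\wh{D}^{\w}(x,G)=d$ by \eqref{eq:wD=d}, so in particular the hypothesis forces $\iota_E^{\,\w}(G)>0$; since $p\mapsto p\,\mathbf{E}\wh{D}^{\w}(x,\go)$ is continuous and equals $\mathbf{E}\wh{D}^{\w}(x,\go)>d-\iota_E^{\,\w}(G)$ at $p=1$, there is $p<1$ with $p\,\mathbf{E}\wh{D}^{\w}(x,\go)>d-\iota_E^{\,\w}(G)$. Let $\go_p$ be obtained from $\go$ by retaining each open edge independently with probability $p$; since the retention variables are i.i.d.\ over $E$ (hence $\Gamma$-invariant) and independent of $\go$, the law of $\go_p$ is again a $\Gamma$-invariant bond percolation, and conditioning on $\go$ gives $\mathbf{E}\wh{D}^{\w}(x,\go_p)=p\,\mathbf{E}\wh{D}^{\w}(x,\go)>d-\iota_E^{\,\w}(G)$. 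By the first part applied to $\go_p$, with positive probability $\go_p$ contains a heavy cluster. As $\go_p\subseteq\go$, any such heavy cluster lies inside a single cluster $C$ of $\go$, and conditionally on $\go$ the trace of $\go_p$ on the edge set of $C$ is precisely Bernoulli$(p)$ bond percolation on the graph $C$. A Fubini argument — together with the fact that $\go$ has only countably many clusters — then produces a positive-$\mathbf{P}$-probability set of configurations $\go$, each carrying some cluster $C$ on which Bernoulli$(p)$ percolation creates a heavy subcluster with positive probability, heaviness being measured with respect to the cocycle $\w$ inherited from $G$; since ``there is a heavy cluster'' is an increasing event, this yields $p_h(C,\gC)\le p<1$ for that $C$.

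The step I expect to be the most delicate is precisely this last passage from the invariant-percolation statement about $\go_p$ to the deterministic inequality $p_h(C,\gC)\le p$: one must verify that the trace of the $p$-thinning on a fixed cluster $C$ of $\go$ is genuinely a Bernoulli$(p)$ process on $C$ (which holds because every $\go_p$-edge incident to $C$ already lies in $\go$, hence inside $C$), so that a positive-probability occurrence of a heavy subcluster there is exactly the statement $p_h(C,\gC)\le p$. Everything else is bookkeeping with \eqref{eq:wD=d}, \eqref{eq:EwhDinv}, and the fact that i.i.d.\ thinning preserves $\Gamma$-invariance.
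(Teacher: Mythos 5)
Your proof is correct and follows essentially the same route as the paper's: the first assertion is the contrapositive of Theorem~\ref{thm:Ewh{D}inv}, and the second is obtained by independent $p$-thinning, observing that $\mathbf{E}\wh{D}^{\w}(x,\go_p)=p\,\mathbf{E}\wh{D}^{\w}(x,\go)$ still exceeds $d-\iota_E^{\,\w}(G)$ for $p$ close to $1$, and then passing via Fubini to a cluster $C$ of $\go$ on which Bernoulli$(p)$ percolation has a heavy subcluster. The paper compresses the thinning and Fubini steps into a one-line remark about continuity of the expected degree in $p$; your version makes the linearity of the expected degree in $p$ and the restriction-to-$C$ argument explicit, which is exactly the right bookkeeping.
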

\begin{proof}
Theorem~\ref{thm:Ewh{D}inv} yields that a configuration $\go$ contains a heavy cluster with positive probability. Considering such an event, notice that the expected degree in $\go$ under Bernoulli$(p)$ percolation on it is continuous in $p$. Thus, for $p$ large enough, the inequality \eqref{eq:EwhDinv} is violated, implying that there is a cluster $C\subseteq\go$ with $p_h(C,\gC)<1$.
\end{proof}

From Theorem~\ref{thm:BLPShf} it follows that $G$ is $\w$-nonamenable if and only if there is $\ga_c\in(0,1)$ such that every $\Gamma$-invariant site percolation on $G$, whose marginal probability exceeds $\ga_c$, has an infinite cluster with positive probability. 
For a quantitative bound, Peres derived in \cite[Theorem 2.3]{Peres00} that for a quasi-transitive $\w$-nonamenable graph $G$ 
\[
\ga_c\le \frac{\max_{x\in V}\deg_G(x)}{\Phi^{\w}_V(G)+\max_{x\in V}\deg_G(x)}.
\] The following statement is a slight generalization of this result as we will find bound for \textit{heavy} clusters, although the argument is essentially the same. We focus on the transitive case, a quasi-transitive version follows similarly.

\begin{thm}\label{thm:site-threshold}
    Let $\Gamma$ be a closed subgroup of $\Aut(G)$ that acts transitively on $G$ and $\w$ be the induced relative weight function as in \eqref{def:haarweights}. Suppose $G$ is of degree $d$ and is $\w$-nonamenable, and  $\mathbf{P}$ is a $\Gamma$-invariant site percolation on $G$, such that
    \begin{equation}\label{eq:site-threshold}
    \mathbf{P}(o\in\go)\ge\frac{d}{\Phi^{\w}_V(G)+d}.
    \end{equation}  
Then $\go$ contains a heavy cluster with positive probability. 
\end{thm}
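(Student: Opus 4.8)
The plan is to argue by contradiction via the Tilted Mass Transport Principle (Theorem~\ref{thm:TMTP}), adapting the mass-transport proof of \cite[Theorem 2.3]{Peres00} to the weighted setting. Suppose instead that $\mathbf{P}$-a.s.\ every cluster of $\go$ is light. Since $\Phi^{\w}_V(G)>0$, the vertex set $V$ is heavy (otherwise $F=V$ would be admissible in \eqref{eq:wCheeger} and force $\Phi^{\w}_V(G)=0$), so the assumption already implies that a.s.\ no cluster equals $V$; in particular the external boundary $\partial_V C(x)$ of the cluster $C(x)$ of an open vertex $x$ is nonempty, and moreover it is light, since $\partial_V C(x)$ is contained in the union of the neighbourhoods of the light set $C(x)$ and $\w^{o}(\partial_V C(x))\le d\,\w^{o}(C(x))<\infty$. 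Next I would introduce the mass transport in which each open vertex $x$ distributes one unit of mass over $\partial_V C(x)$, split according to the cocycle weights: for $x\in\go$ put
\[
f(x,y):=\mathbf{1}[y\in\partial_V C(x)]\,\frac{\w^{x}(y)}{\sum_{z\in\partial_V C(x)}\w^{x}(z)},
\]
and $f(x,y):=0$ when $x\notin\go$. Then $\sum_{y}f(x,y)=\mathbf{1}[x\in\go]$, and since $C(\cdot)$ and $\partial_V(\cdot)$ are $\Gamma$-equivariant while $\w^{x}(y)$ is a $\Gamma$-invariant cocycle (see \eqref{eq:invcoc}), the function $\overline f(x,y):=\mathbf{E}f(x,y)$ is invariant under the diagonal action of $\Gamma$.

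Now I would apply Theorem~\ref{thm:TMTP} to $\overline f$ with $x=y=o$. By transitivity of $\mathbf{P}$ the left-hand side is $\mathbf{E}\sum_{z}f(o,z)=\mathbf{P}(o\in\go)$. On the right-hand side, $f(z,o)\neq 0$ forces $o\notin\go$ and places $z$ in one of the at most $d$ distinct clusters $C_1,\dots,C_k$ of $\go$ that are adjacent to $o$; using $\w^{z}(o)\w^{o}(z)=1$ and $\w^{z}(u)=\w^{z}(o)\w^{o}(u)$ to cancel the cocycle factors, the mass delivered to $o$ from $C_i$ collapses to $\w^{o}(C_i)/\w^{o}(\partial_V C_i)$, which is at most $1/\Phi^{\w}_V(G)$ because $C_i$ is light and, by Lemma~\ref{lem:finite->light}, the infimum defining $\Phi^{\w}_V(G)$ may be taken over light sets. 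Hence $\mathbf{P}(o\in\go)\le \frac{d}{\Phi^{\w}_V(G)}\,\mathbf{P}(o\notin\go)$, which rearranges to $\mathbf{P}(o\in\go)\le \frac{d}{\Phi^{\w}_V(G)+d}$, contradicting the hypothesis except possibly when equality holds.

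The hard part will be excluding the boundary case $\mathbf{P}(o\in\go)=\frac{d}{\Phi^{\w}_V(G)+d}$ (the other points — the diagonal invariance of $f$ and the cancellation of cocycle factors through the TMTP — are routine bookkeeping). Note $\mathbf{P}(o\notin\go)>0$ since $\Phi^{\w}_V(G)>0$, and tightness throughout the chain above forces, a.s.\ on $\{o\notin\go\}$, that all $d$ neighbours of $o$ are open and lie in pairwise distinct light clusters $C_1,\dots,C_d$ with $\w^{o}(\partial_V C_i)=\Phi^{\w}_V(G)\,\w^{o}(C_i)$. But $C_1$ and $C_2$ are then disjoint with no edge between them, so $\partial_V(C_1\cup C_2)\subseteq \partial_V C_1\cup\partial_V C_2$ while $o\in\partial_V C_1\cap\partial_V C_2$; therefore $\w^{o}(\partial_V(C_1\cup C_2))\le \w^{o}(\partial_V C_1)+\w^{o}(\partial_V C_2)-1=\Phi^{\w}_V(G)\,\w^{o}(C_1\cup C_2)-1<\Phi^{\w}_V(G)\,\w^{o}(C_1\cup C_2)$, contradicting the weighted Cheeger inequality applied to the light set $C_1\cup C_2$. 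Thus the assumption that all clusters are light is untenable, and $\go$ contains a heavy cluster with positive probability.
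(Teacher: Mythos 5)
Your proposal is correct and uses the same mass transport scheme as the paper: each open vertex distributes one unit of mass over the external vertex boundary of its cluster, split proportionally to the cocycle weights, and TMTP is applied with $x=y=o$. The paper's version of this transport is
$F(x,y,\go)=\frac{\w^x(y)}{\w^x(\partial_V C(x))}\ind_{\w^x(\partial_V C(x))<\infty}\ind_{x\in\go}\ind_{y\in\partial_V C(x)}$,
which is the same function as yours once you condition on all clusters being light. The paper's argument is written directly (bounding $\mathbf{P}(\w^o(\partial_V C(o))<\infty)$ from above) while yours is by contradiction, but the algebra and the cocycle cancellations are identical.

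The genuine value added by your write-up is the treatment of the equality case. As you observed, with the hypothesis stated as $\mathbf{P}(o\in\go)\ge\frac{d}{\Phi^{\w}_V(G)+d}$ the mass-transport chain only yields $\mathbf{P}(o\in\go)\le\frac{d}{\Phi^{\w}_V(G)+d}$, which is not yet a contradiction when equality holds; indeed the paper's own final line $\frac{d+\Phi^{\w}_V(G)}{\Phi^{\w}_V(G)}(1-\pr(o\in\go))<1$ only follows from \eqref{eq:site-threshold} if the latter is read with strict inequality. Your resolution is clean: equality forces, a.s.\ on $\{o\notin\go\}$, all $d$ neighbours of $o$ to be open and to lie in $d$ distinct light clusters each saturating the Cheeger bound; then $C_1\cup C_2$ is a light set with $o$ counted once in its external boundary rather than twice, yielding $\w^o(\partial_V(C_1\cup C_2))\le\Phi^{\w}_V(G)\w^o(C_1\cup C_2)-1$, a strict violation of $\w$-nonamenability. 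One small caveat: Lemma~\ref{lem:finite->light} as literally stated only gives the equivalence of the two Cheeger infima being zero, not their equality; you should note that its proof in fact gives equality of the infima, which is what both your argument and the paper's use when applying the Cheeger bound to light (rather than finite) sets. Modulo that remark, your proof is correct and in fact slightly more careful than the one in the paper.
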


\begin{proof}
    Consider the following mass transport scheme
    \[
    F(x,y,\go)=\frac{\w^x(y)}{\w^x(\partial_V C(x))}\ind_{\w^x(\partial_V C(x))<\infty}\ind_{x\in \go}\ind_{y\in\partial_V C(x)}.
    \]
    In other words, each vertex $x$ from a light cluster distributes unit mass to the vertices in the external vertex boundary of its cluster $C(x)$ proportional to their weights. 
    Notice that for all $x\in V$ 
    \begin{align}\label{eq:site-mass-out}
        \mathbf{E}\sum_{z\in V}F(x,z,\go)&=\mathbf{E}\sum_{z\in V}\frac{\w^x(z)}{\w^x(\partial_V C(x))}\ind_{\w^x(\partial_V C(x))<\infty}\ind_{x\in \go}\ind_{z\in\partial_V C(x)}\notag\\
        &=\mathbf{P}\left(\w^x(\partial_V C(x))\in(0,\infty)\right)
    \end{align}
    and
    \begin{align}\label{eq:site-mass-in}
         \mathbf{E}\sum_{z\in V}F(z,x,\go)\w^{x}(z)&=\mathbf{E}\sum_{z\in V}\frac{\w^z(x)}{\w^z(\partial_V C(z))}\w^x(z)\ind_{\w^x(\partial_V C(z))<\infty}\ind_{z\in \go}\ind_{x\in\partial_V C(z)}\notag\\
         &=\mathbf{E}\sum_{z\in V}\frac{\w^x(z)}{\w^x(\partial_V C(z))}\ind_{\w^x(\partial_V C(z))<\infty}\ind_{z\in \go}\ind_{x\in\partial_V C(z)}\notag\\
         &\le \frac{d}{\Phi^{\w}_V(G)}\pr(x\notin \go),
    \end{align}
    where the last inequality follows from the fact that $x$ must be closed to be in the boundary of $C(z)$ and it can be adjacent to at most $d$ many open clusters in $\go$.

    Since \eqref{eq:site-mass-out} counts the expected outgoing mass from $x$ and \eqref{eq:site-mass-in} is an upper bounds on the expected incoming mass to $x$, by the TMTP we have that
    \[
    \mathbf{P}\left(\w^x(\partial_V C(x))\in(0,\infty)\right)\le\frac{d}{\Phi^{\w}_V(G)}\pr(x\notin \go).
    \]
    Thus
    \begin{align*}
    \mathbf{P}\left(\w^x(\partial_V C(x))<\infty\right)&\le\pr(x\notin \go)+ \frac{d}{\Phi^{\w}_V(G)}\pr(x\notin \go)\\
    &=\frac{d+\Phi^{\w}_V(G)}{\Phi^{\w}_V(G)}(1-\pr(x\in \go))<1,
    \end{align*}
    where the last inequality follows from \eqref{eq:site-threshold}, completing the proof.
\end{proof}

\subsection{Ends of a graph}\label{sec:ends}
We follow a traditional combinatorial approach to ends \cite[p.~242]{LyonsBook}, for an equivalent point of view via end compactification see \cite[Section 2.B]{FmaxSF22}.

We say that a set of vertices $A$ is \textbf{end-convergent} if for any finite subgraph $K\subset G$ all but finitely many elements of $A$ are contained in the same connected component of $G\setminus K$.  Two end-convergent sets $A$ and $B$ are said to be equivalent if $A\cup B$ is end convergent. Notice that for locally finite graphs any infinite set of vertices contains an end-convergent subsequence. An \textbf{end} of $G$ is an equivalence class of end-convergent sets. We say that a set of vertices \textbf{converges to an end} $\xi$ if it belongs to the equivalence class $\xi$. Finally, we say that an end $\xi$ contains a light (resp.~heavy) \textbf{geodesic path} if there is a path in $G$ such that its vertex set converges to $\xi$ and such that no finite subpath can be replaced by a shorter path, and $\sum_i \w^o(x_i)<\infty$ (resp.~$\sum_i \w^o(x_i)=\infty$).

We now define $\w$-vanishing and heavy ends. The former was originally introduced in \cite{FmaxSF22, AnushRobin}. Given a relative weight function  $\w^o : V \to \bR^+$ on $G$ with respect to some reference point $o$, we call a set $A \subseteq V$ $\w$-\textbf{vanishing} if $$\limsup_{x \in A} \w(x)=0;$$ otherwise $A$ is  $\w$-\textbf{nonvanishing}. 
Finally, we call an end $\xi$ of $G$ $\w$-\textbf{vanishing} if any set $A\subset G$ that converges to it is $\w$-vanishing; otherwise $\xi$ is $\w$-\textbf{nonvanishing}. In particular, if $\xi$ is $\w$-\textbf{nonvanishing} then there is a sequence of vertices $\{x_n\}_{n\in\bN}$ that converges to $\xi$ and whose weights are bounded away from zero. 

Finally, we call an $\xi$ a \textbf{heavy end} if for any finite edge-cut the connected component that contains $\xi$ is heavy. Notice that these definitions do not depend on the choice of the reference point $o$ and for a unimodular $G$ $\w$-nonvanishing and heavy ends reduce to the classical ends.

The next standard lemma follows directly from the TMTP: let each vertex distribute mass $1$ among vertices of maximal weight in its cluster, if there are finitely many of those.
\begin{lem}\label{lem:maxweight}
    Let $\Gamma$ be a closed subgroup of $\Aut(G)$ that acts transitively on $G$ and let $\w$ be the induced relative weight function as in \eqref{def:haarweights}. Suppose $\mathbf{P}$ is a $\gC$-invariant percolation on $G$. Then any cluster that has finitely many vertices of maximal weight is light a.s.
\end{lem}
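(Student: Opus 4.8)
The plan is to prove this by the Tilted Mass Transport Principle (Theorem~\ref{thm:TMTP}) applied to the scheme suggested above: each vertex lying in a cluster that has a finite, nonempty set of maximal-weight vertices spreads one unit of mass equally over those vertices. Concretely, for a configuration $\go$ and $x\in V$ let $C(x)$ be the cluster of $x$; call $C(x)$ \emph{good} if $\sup_{z\in C(x)}\w^o(z)$ is attained and attained at only finitely many vertices, let $M(x)\subseteq C(x)$ be the (then finite and nonempty) set of those vertices, set $N(x):=\abs{M(x)}$, and put
\[
f(x,y,\go)=\frac{1}{N(x)}\,\ind[C(x)\text{ good}]\,\ind[y\in M(x)].
\]
The first routine point is that $f$ is invariant under the diagonal action of $\Gamma$: although $\w^o$ is not itself $\Gamma$-invariant, the condition $y\in M(x)$ can be rewritten as ``$\w^z(y)\le 1$ for every $z\in C(x)$,'' which involves only the $\Gamma$-invariant cocycle $\w$, while $N(x)$ and goodness of $C(x)$ are visibly invariant. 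Since $\mathbf{P}$ is $\Gamma$-invariant, $F(x,y):=\mathbf{E}\,f(x,y,\go)$ is then a $[0,\infty]$-valued diagonally $\Gamma$-invariant function, so Theorem~\ref{thm:TMTP} applies.

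Next I would evaluate both sides of $\sum_{z}F(o,z)=\sum_{z}F(z,o)\,\w^o(z)$. The left-hand side is the expected mass leaving $o$, namely $\mathbf{P}(C(o)\text{ good})\le 1$, because each vertex of a good cluster sends out exactly one unit. For the right-hand side, a vertex $z$ contributes to $o$ only when $o\in M(z)$, which forces $z\in C(o)$ and $o\in M(o)$; summing $\w^o(z)$ over $z\in C(o)$ then gives
\[
\sum_{z}F(z,o)\,\w^o(z)=\mathbf{E}\!\left[\ind[C(o)\text{ good}]\,\ind[o\in M(o)]\,\frac{\w^o(C(o))}{N(o)}\right].
\]
Since the left-hand side is finite, this expectation is finite; but on the event $\{o\in M(o),\ C(o)\text{ good and heavy}\}$ the integrand equals $+\infty$, so that event must have probability $0$.

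Finally I would bootstrap to the stated a.s.\ claim. By transitivity of $\Gamma$ and $\Gamma$-invariance of $\mathbf{P}$, the same conclusion holds with $o$ replaced by an arbitrary vertex $v$; and since any good heavy cluster contains at least one maximal-weight vertex, the event ``some cluster is good and heavy'' is contained in $\bigcup_{v\in V}\{v\in M(v),\ C(v)\text{ good and heavy}\}$, a countable union of null events, hence null. Thus a.s.\ every cluster with finitely many vertices of maximal weight is light. I expect no serious obstacle here: the only care needed is the bookkeeping that ``$y\in M(x)$'' is a $\Gamma$-invariant condition, so that TMTP is legitimately applicable, and that $M(x)$ is finite and nonempty exactly on the good event; everything else is the standard in/out mass-transport computation.
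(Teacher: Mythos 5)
Your proof is correct and is exactly the mass transport the paper has in mind (the paper only gives the one-line hint ``let each vertex distribute mass~$1$ among vertices of maximal weight in its cluster, if there are finitely many of those,'' and you have correctly fleshed it out). One tiny slip: ``$y\in M(x)$'' unpacks to ``$y\in C(x)$ and $\w^z(y)\ge 1$ for all $z\in C(x)$,'' with $\ge$ rather than $\le$, but this does not affect the $\Gamma$-invariance argument.
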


\begin{cor}\label{lem:noends}
    Let $\Gamma$ be a closed subgroup of $\Aut(G)$ that acts transitively on $G$ and let $\w$ be the induced relative weight function as in \eqref{def:haarweights}. Any heavy cluster of a $\gC$-invariant percolation on $G$ contains a $\w$-nonvanishing end.
\end{cor}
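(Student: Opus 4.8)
The plan is to derive the corollary from Lemma~\ref{lem:maxweight} together with a routine compactness extraction of an end. Let $C$ be a heavy cluster of a $\Gamma$-invariant percolation $\mathbf{P}$ on $G$. The first step is to show that, almost surely on the event that $C$ is heavy, one has $\limsup_{x\in C}\w^o(x)>0$ (possibly $+\infty$), where the limsup over the set $C$ means $\inf_{F\subseteq C \text{ finite}}\sup_{x\in C\setminus F}\w^o(x)$. Indeed, suppose instead that $\limsup_{x\in C}\w^o(x)=0$. Fixing any $x_0\in C$, the weights are positive, so $\w^o(x_0)>0$, and the vanishing of the limsup forces the set $\{x\in C:\w^o(x)\ge\w^o(x_0)\}$ to be finite and nonempty. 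Hence $C$ attains a maximal weight, and it does so at only finitely many vertices, so Lemma~\ref{lem:maxweight} yields that $C$ is light almost surely, contradicting heaviness.

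With that reduction in hand, there exist $\eps>0$ and an infinite sequence of \emph{distinct} vertices $x_1,x_2,\dots\in C$ with $\w^o(x_n)\ge\eps$ for all $n$: build the sequence greedily, using at each stage that $\limsup_{x\in C}\w^o(x)>\eps$ (for $\eps$ small enough, or $\eps=1$ if the limsup is $+\infty$) guarantees a vertex of weight $\ge\eps$ outside the finitely many chosen so far. Since $C$ is a locally finite connected graph, this sequence has an end-convergent subsequence $\{x_{n_k}\}$ in $C$; let $\xi$ be the end of $C$ to which it converges. Then $\{x_{n_k}\}$ is a set converging to $\xi$ with $\limsup_k\w^o(x_{n_k})\ge\eps>0$, so it is $\w$-nonvanishing, and therefore $\xi$ is a $\w$-nonvanishing end of $C$, as required.

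The only delicate point is the first step: one must handle the ``limsup over a set'' correctly so that its vanishing genuinely produces a cluster with finitely many maximum-weight vertices, making Lemma~\ref{lem:maxweight} applicable; everything afterward is a standard König-type extraction. It is also worth emphasizing that heaviness is essential here and enters only through Lemma~\ref{lem:maxweight}: a light infinite cluster, such as a ray descending into the light direction of $T_{1,k}$, may contain no $\w$-nonvanishing end at all.
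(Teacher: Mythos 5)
Your proof is correct and follows essentially the same route as the paper: invoke Lemma~\ref{lem:maxweight} to rule out the possibility that a heavy cluster has only finitely many maximum-weight vertices (your ``$\limsup_{x\in C}\w^o(x)>0$'' is equivalent to the paper's ``the cluster contains a sequence of vertices with nondecreasing weights''), and then use the standard fact that in a locally finite graph every infinite vertex set has an end-convergent subsequence. You simply spell out the limsup bookkeeping and the greedy extraction more explicitly than the paper does.
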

\begin{proof}
    By Lemma~\ref{lem:maxweight} a heavy cluster has to contain a sequence of vertices with nondecreasing weights. Since $G$ is locally finite such a sequence contains an end-convergent subsequence.
\end{proof}

\subsection{Measured group theoretic results}\label{sec:mgt}
As we mentioned above, some of our motivation came from recent progress in understanding the amenability of mcp (measure class preserving) Borel graphs in measured group theory. So, in this section, we briefly present relevant material. 

Measured group theory is concerned with studying groups from the point of view of their measurable actions on a standard probability space $(X,\mu)$. It is intertwined with the studies of locally countable Borel graphs and countable Borel equivalence relations (CBERs) as they naturally arise as Schreier graphs and orbit equivalence relations of such actions respectively. The converse also holds, by the Feldman--Moore theorem \cite{Feldman-Moore}. The notion of amenability was extended from groups to CBERs and in the measurable setting is called \textbf{$\mu$-amenability} \cite{Zimmer78}. Here for simplicity we define $\mu$-hyperfiniteness of a CBER, which is equivalent to $\mu$-amenability of the CBER by
the Connes--Feldman--Weiss theorem \cite{CFW}. 
A CBER $\cR$ is called \textbf{$\mu$-hyperfinite} if it is a countable increasing union of Borel equivalence relations with finite classes $\mu$-a.e.  

We call a CBER $\cR$ on a standard probability space $(X,\mu)$ probability measure preserving (\textbf{pmp}) (resp.\ measure class preserving or \textbf{mcp}) if for any Borel automorphism $\gamma$ on $X$ that maps every point to an $\cR$-equivalent point $\gamma$ preserves $\mu$ (resp.\ $\mu$-null sets). It follows from the argument of Kechris and Woodin, see \cite[Proposition 2.1]{Miller:thesis} or \cite[2.2]{TsZomback}, that any CBER $\cR$ on a probability space $(X,\mu)$
is mcp on a $\mu$-conull set. The lack of invariance of mcp CBERs is quantified similarly to Theorem~\ref{thm:TMTP}. This is made precise by the \textbf{Radon--Nikodym cocycle} $(x,y) \mapsto \w_\nu^y(x) : \cR \to \bR^+$ of the orbit equivalence relation $\cR$ with respect to the underlying probability measure $\mu$, as defined in \cite[Section 8]{KMtopics}. Here the  cocycle $\w_\nu^y(x)$ can be also interpreted as a relative weight function on $X$, hence we use the same notation as we did for graphs whenever the context is clear. The Radon--Nikodym cocycle ``corrects'' the failure of invariance of the measure $\mu$, enabling the (tilted) mass transport principle (see \cite[Section 8]{KMtopics} and \cite[Page 18]{Gaboriau05}): for each $f : \cR \to [0, \infty]$ we have that

\begin{equation}\label{eq:BorelMTP}
    \int \sum_{z \in [x]_{\cR}} f(x,z) d\mu(x) = \int \sum_{z \in [y]_{\cR}} f(z,y) \w_\nu^y(z) d\mu(y),
\end{equation}

where $[x]_{\cR}$ denotes the equivalence class of $x$ in $\cR$. Like for graphs, where $\w_\nu\equiv1$ case of Theorem~\ref{thm:TMTP} characterizes unimodular graphs, here the Radon--Nikodym cocycle $\w_\nu^y(x)\equiv 1$ is exactly when $\cR$ is pmp. Hence, just as unimodular transitive graphs turned out to be more approachable, much of the measured group theory has been developed under the assumption of pmp. A big part of this is due to the fact that the theory of cost is developed only for pmp CBERs \cite{Gaboriau:mercuriale}. The \textbf{cost} of a pmp CBER $\cR$ is defined as the infimum of the cost (i.e.\ half of expected degree) of its \textbf{graphings} (Borel graphs $G$ on $X$ whose connectedness relation is equal to $\cR$ a.e.). One can interpret cost is an analog for CBERs of the free rank for groups. Unfortunately, most of the theory of pmp actions and CBERs fails in the mcp setting, for instance, nonamenability of the group does not imply $\mu$-nonamenability of the orbit equivalence relations of its free mcp actions.
See \cite{FmaxSF22} and the references therein for further discussion. 

It is a major open question if there is a mcp analog of cost (see discussion and related results in \cite{poulin-anticost}). Before presenting some of the recent results in the literature we would like to highlight that both theory of cost and classical techniques from percolation theory rely on the value of the expected degree. Thus, they do not depend on the behavior of the Radon–-Nikodym cocycle or the modular function, respectively. In the present paper, as we already seen in Theorem~\ref{thm:Ewh{D}inv} some of the results utilize expected \textit{weighted} degrees (the key in defining cost) to yield desired implications on the percolation side of the interplay. Although it still does not generalize all of the applications of the expected degree from the unimodular setting, we believe it is a step in the right direction.

A Borel graph $\cG$ is called pmp/mcp/$\mu$-amenable/$\mu$-hyperfinite if its connectedness equivalence relation is such. Before the development of the theory of cost, the main approach to studying $\mu$-amenability of pmp graphs was through the number of ends. For instance, Adams \cite{Adams:trees_amenability} showed that an acyclic ergodic pmp graph is $\mu$-amenable exactly when it has $\le 2$ ends a.e. Hence, in the absence of the notion of cost in the mcp setting, it is natural to consider the geometry of the mcp graph, this time together with the behavior of the Radon–-Nikodym cocycle. In \cite{AnushRobin}, Tserunyan and Tucker-Drob generalized the result of Adams to the mcp setting as follows: 
\begin{thm}[{\cite{AnushRobin}}]\label{thm: Anush-Robin}
Let $\cG$ be an acyclic mcp graph on a standard probability space $(X,\mu)$ and $\w_\mu$ be the Radon--Nikodym cocycle of the connectedness relation $\cR_{\cG}$ with respect to $\mu$. Then the following are equivalent:
\begin{enumerate}
    \item $\cG$ is $\mu$-amenable,
    \item  a.e.\ $\cG$-component has $\le 2$ $\w_\mu$-nonvanishing ends,
    \item  a.e.\ $\cG$-component has $\le 2$ ends with heavy geodesic paths.
\end{enumerate}
Moreover, if a.e.\ $\cG$-component has $\le 2$ $\w_\mu$-nonvanishing ends then all other ends have only light geodesic paths.

Here $\w_\mu$-nonvanishing end and light geodesic paths mean the same as in Subsection~\ref{sec:ends}, but with respect to the Radon--Nikodym cocycle $\w_\mu$.
\end{thm}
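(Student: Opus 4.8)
The plan is to adapt Adams's characterisation of $\mu$-amenability for acyclic pmp graphs \cite{Adams:trees_amenability} to the mcp setting, systematically replacing ``finite subtree'' and ``number of ends'' by ``light subtree'' and ``number of $\w$-nonvanishing ends'', and the invariant mass transport principle by its tilted version \eqref{eq:BorelMTP}. The organizing object is the \emph{$\w$-core} of a $\cG$-component $T$: the subforest spanned by all bi-infinite geodesics both of whose ends are $\w$-nonvanishing, equivalently the set of $v\in T$ for which $T\setminus\{v\}$ has at least two components containing a $\w$-nonvanishing end. This is a $\cG$-invariant Borel subforest, and the complementary subforest consists of subtrees all of whose ends are $\w$-vanishing. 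I would first dispose of $(2)\Leftrightarrow(3)$ and the ``moreover'' as by-products of the two main implications, and concentrate on $(1)\Leftrightarrow(2)$.

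For $(2)\Rightarrow(1)$: if a.e.\ component has $\le 2$ $\w$-nonvanishing ends, then each component of the $\w$-core is a (possibly empty, one-sided, or two-sided) geodesic line, off which hang subtrees with only $\w$-vanishing ends. I would build an increasing sequence $\cR_1\subseteq\cR_2\subseteq\cdots$ of Borel subequivalence relations with $\mu$-a.e.\ finite classes, which yields $\mu$-hyperfiniteness, hence $\mu$-amenability by Connes--Feldman--Weiss \cite{CFW}. Declare $x\mathbin{\cR_n}y$ when they lie in the same component, their nearest $\w$-core points are within core-distance $n$, and each of $x,y$ lies in the ``weight $\ge 1/n$'' part of its hanging subtree. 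The content is to check, via the tilted MTP \eqref{eq:BorelMTP}, that these classes are $\mu$-a.e.\ finite and exhaust $\cR_\cG$: since weights tend to $0$ along each $\w$-vanishing end, the level-$1/n$ truncation cuts every such end off after finitely many vertices on a.e.\ component, and a mass transport sending each vertex unit mass spread over the maximal-weight vertices of its truncation (in the spirit of Lemma~\ref{lem:maxweight}) controls the $\mu$-measure of what is discarded at stage $n$. The same transport estimate gives the ``moreover'': a $\w$-vanishing end carrying a heavy geodesic ray would force a positive amount of transported mass to escape to infinity.

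For the contrapositive of $(1)\Rightarrow(2)$: assume that on a $\cG$-invariant Borel set of positive measure a.e.\ component has at least three $\w$-nonvanishing ends, and show $\cR_\cG$ is not $\mu$-amenable. Here the naive route — restrict to the $\w$-core, contract degree-$2$ vertices, and quote ``a treeing of minimal degree $\ge 3$ is non-amenable'' — \emph{fails} in the mcp setting: the boundary action of a free group is $\mu$-amenable although its Schreier graphs branch like a regular tree of degree $\ge 3$, and what rescues amenability there is precisely that all of its ends are $\w$-vanishing. So the $\w$-nonvanishing hypothesis must be used quantitatively, and the difficulty is that along the $\w$-core the weights need not be bounded. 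The idea I would pursue is a scale-localization: each $\w$-core direction out of a branch point contains infinitely many vertices of weight bounded below, so — after averaging over dyadic-type weight windows $[\delta,d^2\delta)$ with $d$ a degree bound and using that adjacent weights differ by a factor in $[1/d,d]$ — one finds a scale $\delta$ for which the ``$\delta$-window'' $\{x:\w(x)\in[\delta,d^2\delta)\}$ has positive $\mu$-measure and on which the graph joining each window vertex to the first window vertex in each of its $\w$-core directions is an acyclic Borel graph of minimal degree $\ge 3$ carrying a \emph{bounded} Radon--Nikodym cocycle; on such a graph the $\w$-boundary is comparable to the ordinary boundary, so no Følner system exists, and since $\mu$-amenability is a measure-class invariant this contradicts $\mu$-amenability of $\cR_\cG$.

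Finally, $(2)\Leftrightarrow(3)$ amounts to the assertion that ``$\le 2$ ends with a heavy geodesic ray'' and ``$\le 2$ $\w$-nonvanishing ends'' cut out the same components up to $\mu$-null sets; I would obtain it by running the two arguments above with ``end with a heavy geodesic ray'' in place of ``$\w$-nonvanishing end'', the ``moreover'' clause then showing that the two notions select the same ends on a.e.\ component. I expect the main obstacle to be exactly the contrapositive of $(1)\Rightarrow(2)$: passing from ``at least three $\w$-nonvanishing ends'' to a genuine quantitative weighted-isoperimetric lower bound — without an invariant measure and without bounded weights, and in a way that survives the boundary-action phenomenon — is precisely where the mcp setting departs from Adams's original argument, and making the scale-localization uniform over branch points is the delicate point.
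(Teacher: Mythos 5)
This theorem is imported verbatim from \cite{AnushRobin}; the paper does not prove it, so there is no in-paper proof to compare against. The nearest in-paper analogue is the alternative argument in the proof of Theorem~\ref{thm:trees and ph} for the percolation reformulation of $(1)\Leftrightarrow(2)$ via cluster graphings. Measured against that, your $(2)\Rightarrow(1)$ direction is morally right: your $\cR_n$ is not transitive as written (``nearest core points within core-distance $n$'' is not an equivalence), but this is a routine repair, and the key observation — that the weight-$\ge 1/n$ part of a hanging subtree with only $\w$-vanishing ends is finite — is correct by local finiteness; it matches the paper's use of Lemma~\ref{lem:finite->light}-style truncations.

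The genuine gap is in your contrapositive of $(1)\Rightarrow(2)$. You correctly see that ``acyclic, min degree $\ge 3$'' alone does not force $\mu$-nonamenability in the mcp setting, and that a bounded-cocycle reduction is needed. But the window graph you propose does not in fact have minimal degree $\ge 3$: joining each window vertex to the first window vertex in each $\w$-core direction gives degree $2$ at every non-branch vertex of the core, and you give no reason why some scale $\delta$ should make the window consist only of branch points of the core. The missing idea — visible in the paper's alternative proof of the $(3)\Rightarrow(1)$ step of Theorem~\ref{thm:trees and ph} — is to restrict attention to \emph{$\w$-trifurcation} vertices (vertices whose removal leaves $\ge 3$ components, each carrying a $\w$-nonvanishing end) and then to prove, by a dedicated tilted mass-transport argument, that a.e.\ component has \emph{infinitely many trifurcations of the same weight}. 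Only then can one restrict to a single constant-weight level — not just a dyadic window — where the induced structure is genuinely unimodular/pmp, and invoke the unimodular non-hyperfiniteness machinery (\cite[Theorem 8.13, Corollary 8.10]{URG}). A bounded window of weights is not enough: without the same-weight pigeonhole you cannot guarantee the window carries a degree-$\ge 3$ forest, and that pigeonhole is itself a non-trivial TMTP argument that your proposal does not supply.
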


The main result of \cite{FmaxSF22} is an explicit construction of a $\mu$-nonamenable subforest of $\cG$, which generalizes a theorem of  Gaboriau--Ghys (\cite{Ghys:Stallings} and \cite[IV.24]{Gaboriau:cout}) to the mcp setting for locally finite graphs.
\begin{thm}[{\cite{FmaxSF22}}]\label{thm:FmaxSF}
    Let $\cG$ be a locally finite mcp graph on a standard probability space $(X,\mu)$ and $\w_\mu$ be the Radon--Nikodym cocycle of the connectedness relation $\cR_{\cG}$ with respect to $\mu$. 
    If a.e.\ $\cG$-component has $\ge 3$ $\w_\mu$-nonvanishing ends, then there is a Borel subforest $\cF \subseteq \cG$ such that for a.e.\ $\cF$-connected component, the space of $\w_\mu$-nonvanishing ends of that component is nonempty and perfect (no isolated points). 
    In particular, $\cG$ is $\mu$-nonamenable. 
    Moreover, $\cF$ can be made ergodic if $\cG$ is.
\end{thm}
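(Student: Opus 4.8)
The plan is to build, inside each $\cG$-component, an explicit Borel tree that branches at least ternarily at every level, routed through vertices of uniformly positive weight, so that its boundary is a nonempty perfect set of $\w$-nonvanishing ends. First I would reduce to the ergodic case: the class of locally finite mcp Borel graphs in which a.e.\ component has $\ge 3$ $\w$-nonvanishing ends is stable under passing to the ergodic components of the underlying measure (the Radon--Nikodym cocycle is unchanged), and a Borel subforest can be glued from subforests produced uniformly on the ergodic pieces; this reduction will also be what yields the ``ergodic if $\cG$ is'' clause, after a standard ergodicity-preserving modification of the construction. So assume $\cG$ ergodic. A mass-transport argument in the spirit of the classical dichotomy that a transitive graph has $0,1,2$ or infinitely many ends then upgrades the hypothesis: for a.e.\ component $C$ the $\w$-nonvanishing ends contain a nonempty \emph{perfect} subset $\Omega_C$. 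Indeed, were there a positive-measure set of components carrying an isolated (or, more generally, only finitely many ``essential'') $\w$-nonvanishing ends, one could separate such an end by a finite edge-cut and let each vertex send unit mass toward the vertices of locally maximal weight on its side; by the tilted mass transport principle~\eqref{eq:BorelMTP} together with Lemma~\ref{lem:maxweight} that side would be light, contradicting $\w$-nonvanishing-ness. (Since $\{\w\text{-nonvanishing ends}\}$ need not be closed one works with a suitable closed subset; this costs nothing.)

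Next I would construct, Borel-measurably and simultaneously in all components, an increasing exhaustion $K_0^C\subseteq K_1^C\subseteq\cdots$ of $C$ by finite subgraphs such that the components of $C\setminus K_n^C$ meeting $\Omega_C$, call them $\cP_n(C)$, satisfy: every $P\in\cP_n(C)$ contains at least three members of $\cP_{n+1}(C)$, and every member of $\cP_{n+1}(C)$ is contained in a unique member of $\cP_n(C)$. This is possible because the $\w$-nonvanishing ends inside $P$ form a clopen, hence perfect, piece of $\Omega_C$, so a suitably chosen next cut separates at least three of them. Ordered by reverse inclusion this data is a tree $\sT_C$ of minimum degree $\ge 3$, whose boundary is canonically a nonempty perfect subset of $\Omega_C$. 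I then realize $\sT_C$ inside $C$: choose in each $P\in\cP_n(C)$ a ``hub'' $h(P)$ of weight bounded below (available since $P$ meets $\Omega_C$, hence contains vertices of uniformly positive weight arbitrarily deep), join $h(P)$ to $h(Q)$ by a finite path inside $P$ for each edge $\{P,Q\}$ of $\sT_C$ with $Q\subseteq P$, arranging that paths attached to incomparable pieces are vertex-disjoint (possible because the pieces at a given level are themselves vertex-disjoint), and finally attach every remaining vertex by a finite geodesic to a hub so as to span $C$. Let $\cF$ be the union of all these paths.

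Then $\cF$ is a Borel spanning subforest of $\cG$; each $\cF$-component faithfully contains a copy of the corresponding $\sT_C$, while the leftover attachments are finite or light and so contribute only $\w$-vanishing ends, whence the $\w$-nonvanishing ends of that component form exactly the nonempty perfect set $\partial\sT_C$ (nonvanishing because the hubs along any ray have weight bounded below). In particular $\cF$ is acyclic with a.e.\ component having more than two $\w$-nonvanishing ends, so $\cF$ is $\mu$-nonamenable by Theorem~\ref{thm: Anush-Robin}; as $\mu$-amenability passes to sub-equivalence-relations, $\cG\supseteq\cF$ is $\mu$-nonamenable as well.

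The hard part is the descriptive-set-theoretic work in the two middle steps: the nested cuts $K_n^C$ and the ternary structure tree $\sT_C$ must be chosen \emph{Borel}-measurably and \emph{coherently} in all components with no invariant way to pick out an individual end or piece, so the selection has to run off a fixed Borel enumeration of finite subgraphs together with a greedy rule that is proved to keep at least three $\w$-nonvanishing pieces alive at every stage; and, in contrast with the unimodular Gaboriau--Ghys setting where only the number of ends is tracked, here the weight of every piece must be controlled throughout --- via~\eqref{eq:BorelMTP} --- so that the manufactured ends really are $\w$-nonvanishing and the hubs sit at uniformly positive weight. The remaining issues (disjoint routing of the realizing paths, the ergodicity-preserving variant) are routine; an alternative worth trying is to first extract a Borel spanning subforest of $\cG$ that still has $\ge 3$ $\w$-nonvanishing ends per component --- a maximal-spanning-forest-type object twisted by the Radon--Nikodym cocycle, as the name $\fmax$ suggests --- and then run the structure-tree argument inside that forest, where it becomes purely combinatorial.
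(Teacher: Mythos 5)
This statement is imported from \cite{FmaxSF22}; the paper you are reading does not prove it, so there is no in-paper proof to compare against. Your sketch does capture the right global shape of what \cite{FmaxSF22} does (produce a subforest whose ends realize a Cantor set of $\w$-nonvanishing ends, and note at the end that a $\w$-twisted maximal-spanning-forest object is the natural vehicle), but the core steps have genuine gaps.

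The most serious gap is the ``upgrade to a perfect set.'' You claim that an isolated $\w$-nonvanishing end in a positive-measure set of components yields a contradiction via TMTP and Lemma~\ref{lem:maxweight}, because the piece $P$ cut off around such an end would have to be light. That inference is false: a set containing a single $\w$-nonvanishing end can perfectly well be heavy. The tree $T_{1,2}$ of Example~\ref{ex:nonuni-tree} has exactly one $\w$-nonvanishing end and is heavy; Lemma~\ref{lem:maxweight} does not apply because there are infinitely many vertices at the maximal weight level attained, and indeed the weights are unbounded along that end. So the asserted lightness of the side containing the isolated end is simply wrong, and without the ``perfect set of nonvanishing ends'' claim your recursion of nested cuts may stall: if a piece has exactly one $\w$-nonvanishing end there is no way to cut it into three sub-pieces each carrying one, so the ternary branching cannot be sustained. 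Some mass-transport argument plausibly does show that the $\w$-nonvanishing ends of a.e.\ component contain a perfect set (this is part of the content of \cite{AnushRobin,FmaxSF22}), but the one you sketch does not.

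Two further gaps. First, the realization of the abstract structure tree $\sT_C$ by vertex-disjoint hub-to-hub paths is not clearly possible: the pieces at level $n{+}1$ are disjoint, but all the paths joining $h(P)$ to the hubs of its children must pass through the same finite annulus, and there is no reason they fit disjointly; and doing all of this Borel-uniformly, with no invariant choice of end or piece, is exactly the descriptive-set-theoretic heart of the theorem, which you flag as ``the hard part'' but do not do. Second, the ergodicity clause is not actually handled by the reduction to ergodic components. That reduction lets you assume the \emph{ambient} measure is ergodic, but $\cF$ being a subforest does not make the $\cF$-connectedness relation ergodic; producing an ergodic $\cF$ from an ergodic $\cG$ requires a separate argument. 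Your closing suggestion --- build a canonical $\w$-maximal spanning forest first and then read off the end structure --- is in fact the route \cite{FmaxSF22} takes, and pursuing it would sidestep the ad~hoc disjoint-routing issue; but the perfect-set and ergodicity points would still have to be addressed.
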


The connection between measured group theory and percolation theory is largely facilitated by the general \textbf{cluster graphing} construction \cite[Section 2.2 and 2.3]{Gaboriau05}, see \cite[Section 5B]{FmaxSF22} for a presentation tailored to the nonunimodular/mcp settings. Here we briefly summarize the general construction.  Observation \ref{obs:cluster} summarizes all the properties of the construction we will use, hence the reader may continue from there. 

Let $G = (V,E)$ be a locally finite connected rooted graph on a vertex set $V$ with root $o\in V$, $\Gamma \subseteq \Aut(G)$ be a closed subgroup, and $\mathbf{P}$ be a $\Gamma$-invariant bond percolation on $G$. In the case when $\Gamma$ is a countable finitely generated group and $G$ is its Cayley graph with respect to a symmetric set of generators $\Sigma$, the cluster graphing can be defined in a straightforward way (\cite{Gaboriau-Lyons,PetePGG}) as follows. Consider the shift action of $\Gamma$ on $2^E$ and the corresponding Schreier graph $\cG$, namely place an edge between $\go,\go'\in 2^E$ whenever there exists $\gamma\in \Sigma$ such that $\gamma \go=\go'$. Then the cluster graphing $\cG^\mathrm{cl}$ is defined as a subgraphing of $\cG$ where the edge $(\go,\go')\in \cG$ is retained if and only if $(o,\gamma o)\in \go$. Notice that this construction is $\Gamma$-invariant, \ie\ in this case $\gamma^{-1}\go'=\go$ and $(o,\gamma o)\in \go$ if and only if $(o,\gamma^{-1} o)\in \go'$.

For a general closed group $\Gamma \subseteq \Aut(G)$ the definition of a cluster graphing requires a few technical steps. Formally one considers an ergodic free pmp action $\Gamma$ on a standard probability space $(X,\mu)$ that factors onto $(2^E,\mathbf{P})$ (\ie\ admits a $\Gamma$-equivariant projection $\pi : X \to 2^E$ such that $\pi_*(\mu)=\mathbf{P}$) and its extension to the diagonal action of $\Gamma$ on $X\times V$ (we think of it as assigning root vertices to elements of $X$). The cluster graphing is then defined on a standard probability space $(Y,\nu)$, where $Y=(X\times V)/\Gamma$. Fixing an $o\in V$, for each element of $Y$ we can take the representative of the form $(x,o)$, and define probability measure $\nu$ on $Y$ as $\nu(A):=\mu(\{x\in X, [(x,o)]_\Gamma \in A
\})$. Now, the action of $\Gamma$ on $Y$ may not preserve $\nu$ (as is the case when $\Gamma$ in nonunimodular), but it is always mcp on a $\nu$-conull set (as mentioned above e.g.\ by \cite[Proposition 2.1]{Miller:thesis}).
Similarly to the above, we first define a graphing $\cG$ whose connected components are isomorphic to $G$ (also known as the graphing of the rerooting relation): by setting two elements $\wt{x}:=[(x,u)]_\Gamma$ and $\wt{y}:=[(y,v)]_\Gamma$ from $Y$ to be adjacent in $\cG$ if and only if there exists $\gamma\in \Gamma$ so that $x=\gamma y$ and $(u,\gamma v)\in E$. That is, one can choose different representatives so that their first coordinates are the same, and their second coordinates are connected by an edge in $G$.
Note that since $\pi$ is $\Gamma$-equivariant it is well defined on $Y$ (for any $\wt{x}=[(x,u)]_\Gamma$ we have $\pi(\wt{x})=\pi(x)$). We now define the cluster graphing $\cG^\mathrm{cl}\subseteq \cG$ by retaining the edges $([(x,u)]_\Gamma,[(x,v)]_\Gamma)$ of $\cG$ such that $(u,v)\in\pi(x)$. Thus, the $\cG^{\mathrm{cl}}$-component of a point $\wt{x}=[(x,o)]_\Gamma \in Y$ is isomorphic to the cluster of the root $o$ in the percolation configuration $\pi(\wt{x})=\pi(x)$. 
Moreover, the Radon--Nikodym cocycle of $\cG^{\mathrm{cl}}$  with respect to $\nu$ is given by the relative weight function $\w_\Gamma$ on $G$ induced by $\Gamma$ (as defined in \eqref{def:haarweights}), see \cite[Lemma 5.7]{FmaxSF22} for the proof.

The following observation summarizes its key properties.
\begin{obs}\label{obs:cluster} Amenability of a closed group $\Gamma\subseteq\Aut(G)$ and the properties of a $\Gamma$-invariant percolation $\mathbf{P}$ on $G$ are connected to the properties of the cluster graphing $\cG^{\mathrm{cl}}$ via the following results.
\begin{enumerate}[label=(\alph*)]
    \item\label{obs:cluster-iso} The $\cG^{\mathrm{cl}}$-component of $\wt{x}=[(x,o)]_\Gamma\in Y$ is isomorphic to the cluster of $o$ in $\pi(x)$.
    \item\label{obs:cluster-pmp} The Radon--Nikodym cocycle $\w_\nu$ of $\cG^{\mathrm{cl}}$ with respect to $\nu$ is induced by $\w_\Gamma$ (as in \eqref{def:haarweights})
    \begin{equation}\label{eq:RNweights}
        \w_\nu([x,v]_\Gamma,[x,u]_\Gamma):=\w_\nu^{[x,u]_\Gamma} ([x,v]_\Gamma):=\w_\Gamma^{u}(v),
    \end{equation}
    \item\label{obs:cluster-amen} If $\cG^{\mathrm{cl}}$ is nowhere amenable then $\Gamma$ is nonamenable. This follows from \cite[Proposition 6]{nonugroups} and \cite[Page 49]{JKLcber}.
\end{enumerate}    
\end{obs}

%
\section{$\sqrt{\w}$-biasing}\label{sec:Kesten}
%
Classically, one of the ways to characterize the amenability of groups and graphs is via the spectral radius of a simple random walk on its Cayley graph. 

\begin{defn}[Spectral radius]
    If $X_n$ is an irreducible Markov chain on $V(G)$ and $p_n(x,y):=\pr_x(X_n=y)$, then we define the spectral radius of $G$ as 
    \[
    \rho(G):=\lim\sup_{n\to \infty} p_{2n}(x,x)^{\frac{1}{2n}},
    \]
where $(X_n)$ is simple random walk with $X_0=x$.
    Notice that $\rho(G)$ is independent from the choice of the vertex $x\in V$.
\end{defn}

Recall Kesten's characterization of amenability for groups via the spectral radius.
\begin{thm}[Kesten \cite{Kesten}]
    Let $\Gamma$ be a finitely generated group, $S$ be a finite symmetric set of generators, and denote the corresponding Cayley graph by $\mathrm{Cay}(\Gamma,S)$. Then $\Gamma$ is amenable if and only if the spectral radius $\rho(\mathrm{Cay}(\Gamma,S))$ of the simple random walk is 1.
\end{thm}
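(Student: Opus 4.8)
The plan is to translate the statement into the language of the Markov operator $P=\tfrac1{\abs S}\sum_{s\in S}\lambda(s)$ acting on $\ell^2(\Gamma)$, where $\lambda$ denotes the left regular representation; since $S=S^{-1}$ this $P$ is self-adjoint, and as an average of unitaries it is a contraction, so $\mathrm{spec}(P)\subseteq[-1,1]$. The first step is the classical identification $\rho(\mathrm{Cay}(\Gamma,S))=\norm P$: writing $\nu$ for the spectral measure of the vector $\delta_e$, one has $p_{2n}(e,e)=\langle P^{2n}\delta_e,\delta_e\rangle=\int_{-1}^1\lambda^{2n}\,d\nu(\lambda)$, hence $p_{2n}(e,e)^{1/2n}\to\sup\{\abs\lambda:\lambda\in\mathrm{supp}\,\nu\}$; since $\delta_e$ is a faithful tracial vector for the group von Neumann algebra $L\Gamma\supseteq W^*(P)$, one gets $\mathrm{supp}\,\nu=\mathrm{spec}(P)$, so the limit is exactly $\norm P$ (standard; see Woess or Lyons--Peres). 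It therefore suffices to prove $\Gamma$ is amenable $\iff\norm P=1$. The easy direction: $\norm P\le1$ always, and if $(F_n)$ is a Følner sequence then the unit vectors $f_n=\abs{F_n}^{-1/2}\mathbf 1_{F_n}$ satisfy $\langle Pf_n,f_n\rangle=\tfrac1{\abs S}\sum_{s\in S}\abs{F_n\cap sF_n}/\abs{F_n}\to1$, so $\norm P=1$.

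The substantive direction is $\norm P=1\Rightarrow\Gamma$ amenable. Since $P=P^*$ with $\norm P=1$, either $1$ or $-1$ lies in $\mathrm{spec}(P)$; replacing $P$ by $P^2$ and $\Gamma$ by the index-$\le2$ subgroup $\langle SS\rangle$ (which is amenable iff $\Gamma$ is) we may assume $1\in\mathrm{spec}(P)$, so there are unit vectors $f_n\in\ell^2(\Gamma)$ with $\norm{Pf_n-f_n}\to0$. The algebraic identity (valid because $S=S^{-1}$)
\[
\langle(I-P)f,f\rangle=\frac1{2\abs S}\sum_{s\in S}\norm{\lambda(s)f-f}^2
\]
then forces $\norm{\lambda(s)f_n-f_n}\to0$ for each $s\in S$, hence $\norm{\lambda(g)f_n-f_n}\to0$ for all $g\in\Gamma$ by the triangle inequality. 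Passing to $\abs{f_n}$ we may take $f_n\ge0$; then $h_n:=f_n^2$ are unit vectors in $\ell^1(\Gamma)$, and Cauchy--Schwarz gives $\norm{\lambda(g)h_n-h_n}_1\le\norm{\lambda(g)f_n-f_n}_2\,\norm{\lambda(g)f_n+f_n}_2\le2\norm{\lambda(g)f_n-f_n}_2\to0$. Finally, a coarea argument on the level sets $\{h_n>t\}$: from $\int_0^\infty\abs{g\{h_n>t\}\triangle\{h_n>t\}}\,dt=\norm{\lambda(g)h_n-h_n}_1$ and $\int_0^\infty\abs{\{h_n>t\}}\,dt=1$, averaging over $t$ and pigeonholing over the finite set $g\in S$ produces a level $t_n$ with $F_n':=\{h_n>t_n\}$ finite, nonempty, and $\abs{sF_n'\triangle F_n'}/\abs{F_n'}\to0$ for every $s\in S$; this is a Følner sequence, so $\Gamma$ is amenable. (Alternatively, once the $\ell^2$-almost-invariant vectors $f_n$ are in hand, one may simply quote the Hulanicki--Reiter characterization of amenability.)

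I expect the content to be concentrated in the second paragraph: both the operator-theoretic bookkeeping that upgrades $\norm P=1$ to a genuine sequence of approximate eigenvectors for the eigenvalue $1$ (including the sign issue handled via $P^2$ and the observation that $\langle SS\rangle$ has finite index), and the two-step descent from $\ell^2$-almost-invariant vectors through $\ell^1$-almost-invariant vectors to honest Følner sets via the layer-cake formula. The identification $\rho=\norm P$ and the easy direction are routine by comparison.
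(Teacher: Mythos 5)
The paper states Kesten's theorem as a citation (\cite{Kesten}) and does not supply a proof, so there is no "paper's own proof" to compare against; your proposal is a self-contained proof that I have checked on its own merits. It is correct and follows the standard operator-theoretic route: identify $\rho$ with $\norm P$ via the spectral measure at the tracial vector $\delta_e$, then pass from $\norm P=1$ to approximate eigenvectors at $1$, to almost-$S$-invariant $\ell^2$ unit vectors via the quadratic-form identity, to almost-invariant $\ell^1$ vectors by Cauchy--Schwarz, and finally to Følner sets by the layer-cake/coarea pigeonhole. All the individual steps are sound.

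One remark: the detour through $P^2$ and $\langle SS\rangle$ to handle the possibility that only $-1$ lies in $\mathrm{spec}(P)$ is more than is needed, and it quietly requires a bit of extra bookkeeping (the almost-eigenvectors for $P^2$ live in $\ell^2(\Gamma)$, not $\ell^2(\langle SS\rangle)$, so one must observe that left multiplication by $\langle SS\rangle$ preserves right cosets and pick the coset carrying at least half the mass before running the Følner extraction). The cleaner observation, which eliminates the case split entirely, is that for a symmetric walk on a group one always has $\sup\mathrm{spec}(P)=\norm P$: every odd return probability $p_{2n+1}(e,e)=\int\lambda^{2n+1}\,d\nu(\lambda)$ is nonnegative, which forces $\sup\,\mathrm{supp}\,\nu\ge-\inf\,\mathrm{supp}\,\nu$, and since $\mathrm{supp}\,\nu=\mathrm{spec}(P)$ (by faithfulness of $\delta_e$, as you note), $\norm P=1$ already gives $1\in\mathrm{spec}(P)$. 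With that, your second paragraph can begin directly with approximate eigenvectors for eigenvalue $1$. Also, as you say, once one has $\ell^2$-almost-invariant vectors one could simply invoke Hulanicki--Reiter; carrying the coarea argument through to explicit Følner sets is a nice touch but optional.
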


This classification extends beyond Cayley graphs, to infinite networks (that is, graphs with positive weights, also known as conductance, on edges, and with Cheeger constant defined accordingly) and the associated network random walk, where the random walk always chooses the next step proportional to the weights on the incident edges. See \cite[Chapters 6.1 and 6.2]{LyonsBook} for details.
\begin{thm}
    A network on an infinite locally finite graph $G$ is amenable if and only if the spectral radius of the network random walk $\rho(G)$ is $1$.
\end{thm}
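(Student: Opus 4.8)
The plan is to reduce the statement to the classical Cheeger inequality for reversible Markov chains, after first passing to a bipartite cover to remove the parity obstruction built into the definition of $\rho$. Write $\pi(x):=\sum_{y}c(x,y)$ for the vertex weights of the network, let $p(x,y):=c(x,y)/\pi(x)$ be the network random walk, and let $P$ denote its transition operator; then $P$ is self-adjoint on $\ell^2(V,\pi)$ with $\|P\|\le 1$. The first ingredient I would invoke (see \cite[Chapter 6]{LyonsBook}) is that for a connected network $\rho(G)=\|P\|_{\ell^2(V,\pi)}$, so that, $\mathrm{spec}(P)$ being a compact subset of $[-1,1]$, we have $\rho(G)=1$ exactly when $1\in\mathrm{spec}(P)$ or $-1\in\mathrm{spec}(P)$. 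Hence it suffices to show that $\|P\|=1$ if and only if the Cheeger constant $\Phi(G):=\inf_{|K|<\infty}\frac{\sum_{e\in\partial_E K}c(e)}{\sum_{x\in K}\pi(x)}$ of the network vanishes (for bounded degree this is equivalent to the vertex Cheeger constant being $0$, matching the paper's notion of amenability).

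Next I would pass to the bipartite double cover $G^{\pm}$, with vertex set $V\times\{+,-\}$ and an edge $(x,+)\sim(y,-)$ of conductance $c(x,y)$ whenever $x\sim_G y$. Two elementary facts are needed. First, the $G^{\pm}$-walk is the $G$-walk decorated with an alternating sign, so $p^{G^{\pm}}_{2n}\bigl((x,+),(x,+)\bigr)=p^{G}_{2n}(x,x)$ and therefore $\rho(G^{\pm})=\rho(G)$. Second, $\Phi(G^{\pm})=0$ iff $\Phi(G)=0$: for ``$\le$'' test finite sets of the form $K\times\{+,-\}$; for the reverse, given a finite $S\subseteq V\times\{+,-\}$ put $K:=\{x:(x,+)\in S\text{ or }(x,-)\in S\}$ and check that the boundary conductance of $K$ in $G$ is at most that of $S$ in $G^{\pm}$ while $\pi(K)\ge\tfrac12\pi_{G^{\pm}}(S)$. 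Since $G^{\pm}$ is bipartite, we have reduced to the case that $G$ itself is bipartite.

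For a bipartite network with parts $A,B$, the unitary $U$ on $\ell^2(V,\pi)$ that multiplies functions supported on $B$ by $-1$ satisfies $UPU=-P$, so $\mathrm{spec}(P)$ is symmetric about $0$ and in particular $\|P\|=\sup\mathrm{spec}(P)$. Thus $\|P\|=1$ iff $\inf_{f\ne 0}\langle(I-P)f,f\rangle_\pi/\|f\|_\pi^2=0$, where $\langle(I-P)f,f\rangle_\pi=\tfrac12\sum_{x\sim y}c(x,y)\bigl(f(x)-f(y)\bigr)^2$ is the Dirichlet form. Testing with indicators $f=\ind_K$ gives $1-\sup\mathrm{spec}(P)\le\Phi(G)$, while the Cheeger inequality for reversible chains \cite[Chapter 6]{LyonsBook} gives $1-\sup\mathrm{spec}(P)\ge\Phi(G)^2/2$; together these show the infimum above vanishes precisely when $\Phi(G)=0$. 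Chaining the three steps gives $\rho(G)=1\iff\Phi(G^{\pm})=0\iff\Phi(G)=0$, that is, $\rho(G)=1$ iff $G$ is amenable.

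The one genuinely delicate point is exactly why the proof is not simply ``$\rho=1\iff$ no spectral gap $\iff$ Cheeger'': a priori $\|P\|$ could be attained at the bottom of $\mathrm{spec}(P)$ rather than the top, and the bottom of the spectrum is not controlled by $\Phi(G)$ on its own. Passing to $G^{\pm}$ is the device that makes this symmetry explicit and lets one use only the one-sided Cheeger inequality. An alternative, more computational route would instead use the positive operator $P^2$: one recognizes $\langle(I-P^2)f,f\rangle_\pi$ as the Dirichlet form of the ``two-step'' network $\wt G$ with conductances $\wt c(x,y)=\pi(x)p_2(x,y)$ and vertex weights again $\pi$, so $\rho(G)=1\iff\Phi(\wt G)=0$, and then compares $\Phi(\wt G)$ with $\Phi(G)$ by hand (one direction is a union bound together with reversibility; the other requires enlarging a near-optimal set $K$ by the vertices from which the walk returns to $K$ with probability $\ge\tfrac12$). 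I expect this comparison of the two Cheeger constants, i.e.\ the parity issue, to be the only place where some care is genuinely required; everything else is a citation of the standard network theory.
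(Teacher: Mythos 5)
The paper does not prove this theorem --- it is presented as classical, with a pointer to Lyons--Peres, Chapters 6.1--6.2. Concretely, the bounds the paper invokes immediately afterwards in Theorem~\ref{thm:wKesten}, namely $1-\sqrt{1-\Phi_E(G,c)^2}\le 1-\rho(G)\le\Phi_E(G,c)$, are Theorem 6.7 of Lyons--Peres and yield the equivalence at once.

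Your proof is a correct standalone argument, and you correctly isolate the genuinely delicate point: $\rho(G)=\|P\|_{\ell^2(\pi)}$ could a priori be attained at $-\inf\mathrm{spec}(P)$, whereas the classical Cheeger inequality only controls $1-\sup\mathrm{spec}(P)$. Passing to the bipartite double cover $G^\pm$ is a clean fix. Both comparisons you assert check out: the return-probability identity $p^{G^\pm}_{2n}\bigl((x,+),(x,+)\bigr)=p^{G}_{2n}(x,x)$ gives $\rho(G^\pm)=\rho(G)$ directly from the $p_{2n}$-definition; and passing between a finite $S\subseteq V\times\{+,-\}$ and its projection $K$ changes $\pi$-mass by at most a factor $2$ while each boundary edge of $K$ produces a boundary edge of $S$ of the same conductance, so $\Phi(G^\pm)=0\Leftrightarrow\Phi(G)=0$. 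The spectral-symmetry identity $UPU=-P$ in the bipartite case is also correct, and then the one-sided Cheeger bounds finish the job. By contrast, the Lyons--Peres argument achieves the same end by working with the positive operator $P^2$, obtaining the sharper quantitative bound $\rho\le\sqrt{1-\Phi_E^2}$ without symmetrizing the spectrum; this is essentially the ``alternative route'' you sketch at the end. Either approach is fine: yours is perhaps cleaner to state if one only wants the dichotomy, while the $P^2$ route gives a better exponent. One small imprecision: bounded degree alone does not make the edge and vertex Cheeger constants of a network comparable (the conductances matter, not just the combinatorial degree); what makes this work in the paper's application is the two-sided bound $d^{-1}\le\w^x(y)\le d$ along every edge, which is exactly what the paper's Lemma~\ref{lem:edgeamenability} establishes for the $\sqrt{\w}$-biased network.
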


We apply this theory to an explicit random walk whose bias accounts for the weight function $\w$. Consider a $\Gamma$ closed subgroup of $\Aut(G)$ that acts transitively on $G$ and let $\w^o(\cdot)$ be the induced relative weight function on the vertices as in \eqref{def:haarweights} with respect to some reference point $o\in V$. Besides edge weights being constant $1$, the most natural network to be associated with $(G,\w^o)$ is the one whose stationary measure is $\w^o$. This is the network where the weight of edge $(x,y)$ are $\widetilde\w^o (x,y):=\sqrt{\w^o(x)\w^o(y)}$ (or any multiple of it by a positive constant). Indeed, letting the transition probability from $x$ to $y$ be $p^{(\w)}(x,y)\propto \widetilde\w^o (x,y)$, we have reversibility:

\begin{align*}
\w^o(x)p^{(\w)}(x,y)&=\w^o(x)\frac{\sqrt{\w^o(x)\w^o(y)}}{\sum_{z\sim x} \sqrt{\w^o(x)\w^o(z)}}=\frac{\sqrt{\w^o(x)\w^o(y)}}{\sum_{z\sim x} \sqrt{\frac{\w^o(z)}{\w^o(x)}}}\\
&=\frac{\sqrt{\w^o(x)\w^o(y)}}{\sum_{z\sim y} \sqrt{\frac{\w^o(z)}{\w^o(y)}}}=\w^o(y)\frac{\sqrt{\w^o(x)\w^o(y)}}{\sum_{z\sim y} \sqrt{\w^o(y)\w^o(z)}}
=\w^o(y)p^{(\w)}(y,x),    
\end{align*}
where in the first equality in the second line we used transitivity of $G$ and $\Gamma$-invariance of $\w$.

The corresponding random walk is called the \textbf{square-root biased} ($\sqrt{\w}$-biased) random walk $X^{(\w)}_n$ on $G$ and was first used by Tang in \cite{Pengfei18} to prove indistinguishability of heavy clusters of Bernoulli percolation. The name comes from the fact that one can reinterpret the transition probabilities as at each step the random walk makes a step to a neighboring vertex with probability proportional to the square root of its weight: for any $x\sim y$
\begin{equation}\label{eq:biaskernel}
p^{(\w)}(x,y)=\frac{\sqrt{\w^o(x)\w^o(y)}}{\sum_{z\sim x} \sqrt{\w^o(x)\w^o(z)}}=\frac{\sqrt{\w^o(y)}}{\sum_{z\sim x}\sqrt{\w^o(z)}}=\frac{1}{D^{\w}}\sqrt{\w^x(y)},
\end{equation}
where we call the normalization constant the \textbf{$\sqrt{\w}$-degree} of $G$ as 
    \begin{equation}\label{eq:D}
    D^{\w}:=D^{\w}(o,G):=\sum_{z\sim o}\sqrt{\w^o(z)}.
    \end{equation}
Notice that by transitivity of $G$ and the invariance of the weight function $\w$, $D^{\w}$ does not depend on the choice of the reference vertex $o$. Also note that when $G$ is transitive and unimodular (\ie\ $\w\equiv1$) it reduces to the simple random walk on $G$.

Following \cite[Chapter 6.1]{LyonsBook} with conductance given by 
\begin{equation}\label{def:cond}
    c(x,y):=\frac{\widetilde\w^o (x,y)}{D^{\w}}=\w^o(x)p^{(\w)}(x,y)
\end{equation}
we now define the \textbf{weighted edge-expansion constant} for a subgraph $F\subseteq G$ as
    \begin{equation}\label{eq:edgewCheegerset}
    \Phi^{\w}_E(F;G,c):=\frac{c(\partial_E F)}{\w^o(F)}=\frac{1}{\w^o(F)}\sum_{x\in F\atop (x,y)\in \partial_E F} \frac{\sqrt{\w^o(x)\w^o(y)}}{D^{\w}},
    \end{equation}
    where for a set of edges $E'\subset E$ we set $c(E'):=\sum_{e\in E'}c(e)$ and $\partial_E F$ denotes the external edge boundary. Taking infimum over all subgraphs of finite weight we define
    \begin{equation}\label{eq:edgewCheeger}
    \Phi^{\w}_E(G,c):=\inf_{F\subseteq V\atop \w(F)<\infty}\Phi^{\w}_E(F;G,c).
    \end{equation}

\begin{lem}\label{lem:edgeamenability}
    Let $\Gamma$ be a closed subgroup of $\Aut(G)$ that acts transitively on $G$ and let $\w$ be the induced relative weight function as in \eqref{def:haarweights}. Suppose each vertex in $G$ is $d$-regular then
    \[
    \frac{1}{\sqrt{d} D^{\w}}\cdot \iota_E^{\,\w}(G)\le\Phi^{\w}_E(G,c)\le\iota_E^{\,\w}(G)
    \]
    and
    \[
   \frac{1}{\sqrt{d} D^{\w}} \cdot \Phi^{\w}_V(G)\le \Phi^{\w}_E(G,c)\le \frac{d}{D^{\w}}\cdot \Phi^{\w}_V(G),
    \]
    where $\Phi^{\w}_V(G)$ is as in \eqref{eq:wCheeger} and $\iota_E^{\,\w}(G)$ is as in \eqref{def:BLPS_edgeiso}.
    
    In particular, a graph $G$ is $\w$-amenable if and only if $\Phi^{\w}_E(G,c)=0$.
\end{lem}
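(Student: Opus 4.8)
The plan is to prove the two chains of inequalities by direct estimates comparing the edge-quantities, using only the bound $d^{-1}\le \w^x(y)\le d$ for adjacent $x,y$ (which follows from \eqref{def:haarweights}) and the normalization $D^\w=\sum_{z\sim o}\sqrt{\w^o(z)}$. The key elementary observation is that for an edge $(x,y)\in\partial_E F$ with $x\in F$, the conductance term is $c(x,y)=\w^o(x)\sqrt{\w^x(y)}/D^\w$, so that
\[
\frac{1}{D^\w}\cdot\frac{1}{\sqrt d}\,\w^o(x)\le c(x,y)\le \frac{1}{D^\w}\cdot\sqrt d\,\w^o(x),
\]
using $d^{-1/2}\le\sqrt{\w^x(y)}\le d^{1/2}$.

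First I would prove the first chain, relating $\Phi^\w_E(G,c)$ to $\iota_E^{\,\w}(G)$ from \eqref{def:BLPS_edgeiso}. Summing the displayed two-sided bound over all boundary edges, and noting that for each $x\in F$ the number of edges of $\partial_E F$ incident to $x$ is at most $d$, one gets for any finite $F$
\[
\frac{1}{\sqrt d\,D^\w}\sum_{x\in F,\,(x,y)\in\partial_E F}\w^o(x)\le c(\partial_E F)\le \frac{\sqrt d}{D^\w}\sum_{x\in F,\,(x,y)\in\partial_E F}\w^o(x).
\]
The right-hand inequality actually only uses $\sqrt{\w^x(y)}\le\sqrt d$ edgewise, so no extra factor of $\sqrt d$ appears; dividing by $\w^o(F)$ and taking the infimum over finite $F$ gives $(\sqrt d\,D^\w)^{-1}\iota_E^{\,\w}(G)\le\Phi^\w_E(G,c)$. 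For the upper bound $\Phi^\w_E(G,c)\le\iota_E^{\,\w}(G)$ I note $c(x,y)=\sqrt{\w^o(x)\w^o(y)}/D^\w=\w^o(x)\sqrt{\w^x(y)}/D^\w\le\w^o(x)$, since $\sqrt{\w^x(y)}\le\sqrt d\le D^\w$ (as $D^\w=\sum_{z\sim o}\sqrt{\w^o(z)}\ge \sqrt d\cdot\min_z\sqrt{\w^o(z)}$... here one must be a little careful). Actually the cleanest route for the upper bound is: $c(x,y)=\sqrt{\w^o(x)\w^o(y)}/D^\w$ and by AM–GM $\sqrt{\w^o(x)\w^o(y)}\le\tfrac12(\w^o(x)+\w^o(y))$, but since we want the quantity in \eqref{def:BLPS_edgeiso} which weights by $\w^o(x)$ only, I instead use $\sqrt{\w^o(x)\w^o(y)}=\w^o(x)\sqrt{\w^x(y)}$ and bound $\sqrt{\w^x(y)}/D^\w\le 1$, which holds because $D^\w=\sum_{z\sim o}\sqrt{\w^o(z)}\ge\sqrt{\w^o(y')}$ for any neighbor $y'$ of $o$, and by transitivity the maximal value of $\sqrt{\w^x(y)}$ over edges equals such a term. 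Summing gives $c(\partial_E F)\le\sum_{x\in F,\,(x,y)\in\partial_E F}\w^o(x)$, hence $\Phi^\w_E(G,c)\le\iota_E^{\,\w}(G)$.

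Next I would deduce the second chain from the first by combining with the already-proven Lemma stating $d^{-1}\Phi^\w_V(G)\le\iota_E^{\,\w}(G)\le d\,\Phi^\w_V(G)$. Chaining: $\Phi^\w_E(G,c)\ge(\sqrt d\,D^\w)^{-1}\iota_E^{\,\w}(G)\ge(\sqrt d\,D^\w)^{-1}\cdot d^{-1}\Phi^\w_V(G)$ — wait, this gives $d^{3/2}D^\w$ in the denominator, not $\sqrt d\,D^\w$, so the stated inequality must come from a more direct comparison rather than from chaining through $\iota_E^{\,\w}$. So for the lower bound in the second chain I would argue directly: $c(\partial_E F)=D^{\w,-1}\sum\sqrt{\w^o(x)\w^o(y)}\ge D^{\w,-1}\sum_{y\in\partial_V F}\sqrt{\w^o(y)}\cdot\min(\ldots)$; more precisely for each $y\in\partial_V F$ pick one edge to $F$, so $c(\partial_E F)\ge D^{\w,-1}\sum_{y\in\partial_V F}\sqrt{\w^o(x_y)\w^o(y)}\ge (\sqrt d\,D^\w)^{-1}\sum_{y\in\partial_V F}\w^o(y)=(\sqrt d\,D^\w)^{-1}\w^o(\partial_V F)$, using $\sqrt{\w^o(x_y)}=\sqrt{\w^o(y)\w^{y}(x_y)}\ge\sqrt{\w^o(y)}/\sqrt d$. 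For the upper bound: $c(\partial_E F)=D^{\w,-1}\sum_{x\in F,(x,y)\in\partial_E F}\sqrt{\w^o(x)\w^o(y)}$, and for each $y\in\partial_V F$ there are at most $d$ incident edges into $F$ and each contributes $\sqrt{\w^o(x)\w^o(y)}\le\sqrt d\,\w^o(y)$, giving $c(\partial_E F)\le D^{\w,-1}\cdot d\sqrt d\cdot\w^o(\partial_V F)$ — again a power mismatch. The resolution: one should bound $\sum_{x\in F}\sqrt{\w^o(x)\w^o(y)}$ over neighbors $x$ of fixed $y$ by $\sqrt{\w^o(y)}\sum_{x\sim y}\sqrt{\w^o(x)}=\sqrt{\w^o(y)}\cdot D^\w$ (the full $\sqrt\w$-degree), so $c(\partial_E F)\le D^{\w,-1}\sum_{y\in\partial_V F}\sqrt{\w^o(y)}D^\w\le\sum_{y\in\partial_V F}\sqrt d\,\w^o(y)$, wait that uses $\sqrt{\w^o(y)}\le\sqrt d$ times $\w^o(y)/\w^o(y)$... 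The correct clean bound is: $\sqrt{\w^o(y)}\le\sqrt d\cdot$ is not generally true. I would instead write $\sqrt{\w^o(y)}=\w^o(y)/\sqrt{\w^o(y)}$ and use that $\w^o(y)\le d\cdot\w^o(x)$ for neighbors — ultimately the honest statement is $c(\partial_E F)\le\frac{d}{D^\w}\w^o(\partial_V F)$ via bounding each $\sqrt{\w^o(x)\w^o(y)}\le\w^o(y)\sqrt{\w^y(x)}\le\sqrt d\,\w^o(y)$ and then summing at most $d$ edges per boundary vertex but dividing appropriately; I expect the constant bookkeeping here to be the \emph{main obstacle}, and I would resolve it by the mass-transport / TMTP argument exactly as in the preceding lemmas (each $z\in\partial_V F$ receives mass from at most $d$ clusters, $\w^z(x)\cdot|\{x\in F: x\sim z\}|\le D^\w(z,G)$ suitably interpreted), which is the same trick used to prove the $\iota_E^{\,\w}\le d\,\Phi^\w_V$ lemma. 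Finally, for the concluding biconditional: $\w$-amenability means $\Phi^\w_V(G)=0$ by Definition~\ref{def:wamen}, and by the second chain $\Phi^\w_V(G)=0\iff\Phi^\w_E(G,c)=0$, since $D^\w$ and $d$ are finite positive constants (finiteness of $D^\w$ uses local finiteness). This is immediate once the chain is established.
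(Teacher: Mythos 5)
Your treatment of the first chain and of the lower bound of the second chain matches the paper's proof: write $c(x,y)=\w^o(x)\,p^{(\w)}(x,y)$, use $p^{(\w)}(x,y)\le 1$ for the upper bound and $p^{(\w)}(x,y)=\sqrt{\w^x(y)}/D^\w\ge 1/(\sqrt d\,D^\w)$ for the lower bound, and for the second chain's lower bound use reversibility $c(x,y)=\w^o(y)\,p^{(\w)}(y,x)$ and one boundary edge per $y\in\partial_V F$. That is exactly the paper's route, and your estimates there are correct.

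The one place where you do not actually close the argument is the upper bound of the second chain, and you say as much (``I expect the constant bookkeeping here to be the main obstacle''). The attempts you float give $d^{3/2}/D^\w$ rather than $d/D^\w$, and the deferral to ``the TMTP argument as in the preceding lemmas'' is not a proof. The clean fix is to use that $p^{(\w)}(y,\cdot)$ is a probability measure on the neighbors of $y$, so there is no need to estimate each edge separately and multiply by $d$ afterward: for a fixed $y\in\partial_V F$,
\[
\sum_{\substack{x\in F\\ x\sim y}} c(x,y)\;=\;\w^o(y)\sum_{\substack{x\in F\\ x\sim y}} p^{(\w)}(y,x)\;\le\;\w^o(y)\sum_{x\sim y} p^{(\w)}(y,x)\;=\;\w^o(y),
\]
hence $c(\partial_E F)\le\w^o(\partial_V F)$, i.e.\ $\Phi^\w_E(G,c)\le\Phi^\w_V(G)$. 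Combining with $D^\w\le d$ (Lemma~\ref{lem:weightdegineq}) gives $\Phi^\w_E(G,c)\le\frac{d}{D^\w}\Phi^\w_V(G)$ as claimed. (The paper's own write-up bounds this step only by $d\,\w^o(\partial_V F)$, which yields $d\,\Phi^\w_V(G)$ rather than the advertised constant; the sum-to-one observation above is the tight version, and it is what the statement actually needs.) Your final sentence deducing the biconditional from the second chain and the finiteness and positivity of $d$ and $D^\w$ is correct.
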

\begin{proof}
Indeed, to derive the first set of inequalities notice that
Recall that that for any pair of adjacent vertices $x,y$ we have that $1/d\le\w^x(y)\le d$. Let $F\subseteq G$ be an arbitrary finite subgraph then to see the first set of inequalities notice that
\begin{align*}
    c(\partial_E F)&=\sum_{x\in F\,,\,y\in\partial_V F \atop x\sim y}  \w^o(x)p^{(\w)}(x,y)\le \sum_{x\in F\,,\,y\in\partial_V F \atop x\sim y}  \w^o(x)
\end{align*}
and by rewriting $p^{(\w)}(x,y)$ as in \eqref{eq:biaskernel} we get
\begin{align*}
    c(\partial_E F)&=\sum_{x\in F\,,\,y\in\partial_V F \atop x\sim y}  \w^o(x)\frac{\sqrt{\w^{x}(y)}}{D^{\w}}\ge \frac{1}{\sqrt{d} D^{\w}} \sum_{x\in F\,,\,y\in\partial_V F \atop x\sim y}  \w^o(x).
\end{align*}
To derive the second set of inequalities we use reversibility
    \begin{align*}
    c(\partial_E F)&=\sum_{x\in F\,,\,y\in\partial_V F \atop x\sim y}  \w^o(x)p^{(\w)}(x,y)=\sum_{x\in F\,,\,y\in\partial_V F \atop x\sim y}  \w^o(y)p^{(\w)}(y,x)\\
    &\le\sum_{x\in F\,,\,y\in\partial_V F \atop x\sim y}  \w^o(y)\le d \cdot \w^o(\partial_V F)
    \end{align*}
    and again \eqref{eq:biaskernel} to conclude that
    \begin{align*}
    c(\partial_E F)
     &=\sum_{x\in F\,,\,y\in\partial_V F \atop x\sim y}  \w^o(y)p^{(\w)}(y,x)\ge \frac{1}{\sqrt{d} D^{\w}}  \sum_{y\in\partial_V F}\w^o(y)= \frac{1}{\sqrt{d} D^{\w}}  \cdot \w^o(\partial_V F).\tag*{\qedhere}
    \end{align*}
\end{proof}
Lemma~\ref{lem:edgeamenability} together with a simple application of \cite[Theorem 6.7]{LyonsBook} yields the following analog of Kesten's characterization.
\begin{thm}\label{thm:wKesten}
    Let $\Gamma$ be a closed subgroup of $\Aut(G)$ that acts transitively on $G$ and let $\w$ be the induced relative weight function as in \eqref{def:haarweights}. Then the spectral radius $\rho^{(\w)}(G)$ of $X^{(\w)}$ satisfies
    \[
    \frac12\Phi^{\w}_E(G,c)^2\le1-\sqrt{1-\Phi^{\w}_E(G,c)^2}\le1-\rho^{(\w)}(G)\le \Phi^{\w}_E(G,c).
    \]
    In particular, $\rho^{(\w)}(G)=1$ if and only if $G$ is $\w$-amenable.
\end{thm}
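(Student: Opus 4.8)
\textit{The plan} is to recognize $X^{(\w)}$ as the network random walk of an explicit conductance network, quote the classical discrete Cheeger inequalities in the form of \cite[Theorem 6.7]{LyonsBook}, and then read off the ``in particular'' from Lemma~\ref{lem:edgeamenability}. For the setup, let $c$ be the conductance of \eqref{def:cond}. Then $(G,c)$ is a network whose network random walk has transition probabilities $c(x,y)/\sum_{z\sim x}c(x,z)=p^{(\w)}(x,y)$, i.e.\ it is exactly $X^{(\w)}$; by the reversibility computation preceding \eqref{def:cond} it is reversible with respect to $\w^o$, and the conductance of a vertex is $\sum_{y\sim x}c(x,y)=\w^o(x)\sum_{y\sim x}p^{(\w)}(x,y)=\w^o(x)$. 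Hence the edge-isoperimetric constant of $(G,c)$ in the sense of \cite[Chapter 6]{LyonsBook} --- the infimum of $c(\partial_E F)/\w^o(F)$ --- is precisely $\Phi^{\w}_E(G,c)$ of \eqref{eq:edgewCheeger}. I would also recall the standard fact (\cite[Section 6.1]{LyonsBook}) that for this reversible chain the return-probability spectral radius $\rho^{(\w)}(G)=\limsup_n p_{2n}(x,x)^{1/2n}$ equals the operator norm on $\ell^2(\w^o)$ of the transition operator, is independent of $x$, and lies in $[0,1]$; note that $\w^o$ is a stationary measure but in general not finite, so one works in $\ell^2(\w^o)$ throughout rather than in $\ell^2$ of a probability measure.

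With this dictionary the three middle inequalities are \cite[Theorem 6.7]{LyonsBook} applied to $(G,c)$: the variational (``easy'') bound --- testing the Dirichlet form against $\ind_F$ for near-optimal light $F$, which is legitimate since $\ind_F\in\ell^2(\w^o)$ --- gives $1-\rho^{(\w)}(G)\le\Phi^{\w}_E(G,c)$, while the discrete Cheeger inequality gives $\rho^{(\w)}(G)\le\sqrt{1-\Phi^{\w}_E(G,c)^2}$, equivalently $1-\sqrt{1-\Phi^{\w}_E(G,c)^2}\le1-\rho^{(\w)}(G)$; this is meaningful because $\Phi^{\w}_E(G,c)\le1$, as $c(\partial_E F)\le\w^o(F)$ for every $F$. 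The leftmost inequality is the elementary estimate $1-\sqrt{1-t}\ge t/2$ on $[0,1]$, applied with $t=\Phi^{\w}_E(G,c)^2$.

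Finally, for the ``in particular'': if $\Phi^{\w}_E(G,c)=0$ then $1-\rho^{(\w)}(G)\le0$, which together with $\rho^{(\w)}(G)\le1$ forces $\rho^{(\w)}(G)=1$; if instead $\Phi^{\w}_E(G,c)>0$ then $1-\rho^{(\w)}(G)\ge\tfrac12\Phi^{\w}_E(G,c)^2>0$, so $\rho^{(\w)}(G)<1$. By Lemma~\ref{lem:edgeamenability} the vanishing $\Phi^{\w}_E(G,c)=0$ is equivalent to $\w$-amenability of $G$, which completes the proof. I do not expect a genuine obstacle: all the analytic content is already contained in \cite[Theorem 6.7]{LyonsBook}, and the only points demanding attention are the verification that the network's vertex conductances are exactly $\w^o$ (so that its edge-Cheeger constant really is $\Phi^{\w}_E(G,c)$), the harmless discrepancy between taking the infimum in \eqref{eq:edgewCheeger} over light versus finite sets, and the need to work in $\ell^2(\w^o)$ for a possibly infinite stationary measure.
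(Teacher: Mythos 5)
Your proposal is correct and matches the paper's argument exactly: the paper's proof is the one-line remark that "Lemma~\ref{lem:edgeamenability} together with a simple application of \cite[Theorem 6.7]{LyonsBook}" yields the result, which is precisely the dictionary you set up (vertex conductances equal $\w^o$, edge Cheeger constant of the network $(G,c)$ equals $\Phi^{\w}_E(G,c)$) followed by the quoted Cheeger inequalities. You also carefully spell out the small points the paper leaves implicit --- working in $\ell^2(\w^o)$, the bound $\Phi^{\w}_E(G,c)\le 1$, and the elementary estimate $1-\sqrt{1-t}\ge t/2$ --- which is a welcome bit of extra rigor but not a different method.
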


By the exact same arguments as in the proofs of Theorem~\ref{thm:Ewh{D}inv} and Corollary~\ref{thm:ph_threshold} (replacing $\wh{A}^{\,\w}$ from with $A^{\w}(F):=\frac{1}{\w^o(F)}\sum_{x,y\in F\atop (x,y)\in F}\sqrt{\w^o(x)\w^o(y)}/D^{\w}$) one can get percolation threshold results for this notion of the weighted degree. First, similarly to the definition of weighted degree of $x$ in $F$ as in \eqref{def:BLPS_D_F} we define the $\sqrt{\w}$-\textbf{degree of a vertex $x$ in a subgraph} $F\subseteq G$ as
     \begin{equation}\label{def:sqrtdeg}
     D^{\w}(x,F):=\sum_{z\in V\atop (x,z)\in F}\sqrt{\w^x(z)}.
     \end{equation}
     \begin{thm}\label{thm:EDinv}
     Let $\Gamma$ be a closed subgroup of $\Aut(G)$ that acts transitively on $G$ and $\w$ be the induced relative weight function as in \eqref{def:haarweights}. Suppose that $\mathbf{P}$ is a $\Gamma$-invariant bond percolation in $G$. If $\mathbf{P}$-a.s.\  all clusters are light, then
     \begin{equation*}
     \mathbf{E} D^{\w}(x,\go) \leq (1-\Phi_E^{\w}(G,c)) D^{\w}.   
     \end{equation*}
\end{thm}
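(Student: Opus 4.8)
The plan is to mimic exactly the mass-transport argument used in the proof of Theorem~\ref{thm:Ewh{D}inv}, but with the $\sqrt{\w}$-degree $D^{\w}(x,\go)$ in place of $\wh{D}^{\w}(x,\go)$ and the functional $A^{\w}$ in place of $\wh{A}^{\,\w}$. Define, for a finite subgraph $F\subseteq G$,
\[
A^{\w}(F):=\frac{1}{\w^o(F)}\sum_{x,y\in F\atop (x,y)\in F}\frac{\sqrt{\w^o(x)\w^o(y)}}{D^{\w}}
=\frac{1}{\w^o(F)}\sum_{(x,y)\in E(F)}\frac{\sqrt{\w^o(x)\w^o(y)}}{D^{\w}}\cdot 2,
\]
and $A^{\w}(G):=\sup_{F\subseteq G,\ \abs{F}<\infty}A^{\w}(F)$. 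The first routine step is the identity $\Phi_E^{\w}(G,c)+A^{\w}(G)=1$: indeed $c(\partial_E F)/\w^o(F)+A^{\w}(F)$ telescopes to $\frac{1}{\w^o(F)}\sum_{x\in F}\sum_{y\sim x}\sqrt{\w^o(x)\w^o(y)}/D^{\w}=\frac{1}{\w^o(F)}\sum_{x\in F}\w^o(x)=1$, by the definition \eqref{eq:D} of $D^{\w}$ and the cocycle identity; taking infima gives the claim.

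Next I would set up the mass transport exactly as before. Let $C(x)$ be the cluster of $x$ and $\ga_x:=\w^o(C(x))\in(0,\infty)$, which is finite a.s.\ by the lightness hypothesis. Each vertex $x$ sends to every $y\in C(x)$ (including $y=x$) the mass $\ga_x^{-1}\w^o(y)\,D^{\w}(x,\go)$; since $\sum_{y\in C(x)}\ga_x^{-1}\w^o(y)=1$, the expected mass sent out of $x$ is $\mathbf{E}\,D^{\w}(x,\go)$. For the incoming mass, the tilted mass transport principle (Theorem~\ref{thm:TMTP}) applied to the $\Gamma$-diagonally-invariant transport function contributes a factor $\w^x(y)$, and one computes
\[
\mathbf{E}\sum_{y\in C(x)}\frac{\w^o(x)}{\ga_x}D^{\w}(y,\go)\,\w^x(y)
=\mathbf{E}\,\ga_x^{-1}\sum_{y\in C(x)}\w^o(y)\sum_{z\in C(x)\atop (y,z)\in E(\go)}\sqrt{\w^y(z)}
=\mathbf{E}\,\ga_x^{-1}\sum_{(y,z)\in E(\go[C(x)])}\bigl(\sqrt{\w^o(y)\w^o(z)}+\sqrt{\w^o(z)\w^o(y)}\bigr),
\]
using $\w^o(y)\sqrt{\w^y(z)}=\sqrt{\w^o(y)\w^o(y)\w^y(z)}=\sqrt{\w^o(y)\w^o(z)}$ together with symmetrizing over the two orientations of each edge; this equals $\mathbf{E}\,D^{\w}\cdot A^{\w}(C(x))$. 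Equating outgoing and incoming mass by TMTP gives $\mathbf{E}\,D^{\w}(x,\go)=D^{\w}\cdot\mathbf{E}\,A^{\w}(C(x))\le D^{\w}\cdot A^{\w}(G)=(1-\Phi_E^{\w}(G,c))\,D^{\w}$, which is the assertion.

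The only genuinely new bookkeeping compared with Theorem~\ref{thm:Ewh{D}inv} is checking that the weight-tilts cancel correctly for the square-root normalization — i.e.\ that $\w^o(y)\sqrt{\w^y(z)}=\sqrt{\w^o(y)\w^o(z)}$, which is just the cocycle relation $\w^y(z)=\w^o(z)/\w^o(y)$ — and identifying the resulting sum with $D^{\w}\,A^{\w}(C(x))$; this is where I expect the main (though mild) obstacle to lie, since one must be careful about the orientation of edges in $E(\go)$ versus the symmetric sum defining $A^{\w}$. Everything else, including the finiteness of $\ga_x$ and the justification that the transport function is diagonally $\Gamma$-invariant so that Theorem~\ref{thm:TMTP} applies, is identical to the proof already given for $\wh{D}^{\w}$. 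Since the excerpt explicitly flags that this follows ``by the exact same arguments'' with the stated substitution, a short proof saying precisely this, with the two computations above displayed, suffices.
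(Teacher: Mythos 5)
Your proof is correct and follows exactly the approach the paper intends: it carries out the same mass-transport scheme as in Theorem~\ref{thm:Ewh{D}inv} with $\wh{D}^{\w}, \wh{A}^{\,\w}$ replaced by $D^{\w}, A^{\w}$, verifying the cocycle cancellation $\w^o(y)\sqrt{\w^y(z)}=\sqrt{\w^o(y)\w^o(z)}$ and the complementary identity $\Phi_E^{\w}(G,c)+A^{\w}(G)=1$. This is precisely the substitution the paper flags in the sentence preceding the statement, so there is nothing to add.
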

\begin{cor}\label{thm:ph_threshold}
     Let $\Gamma$ be a closed subgroup of $\Aut(G)$ that acts transitively on $G$ and $\w$ be the induced relative weight function as in \eqref{def:haarweights}. Suppose $\mathbf{P}$ is a $\Gamma$-invariant bond percolation on $G$. If 
     \[
     \frac{\mathbf{E} D^{\w}(x,\go)}{D^{\w}}>1-\Phi_E^{\w}(G,c)
     \] then with positive probability there is a heavy cluster in $\go$. Moreover, with positive probability there is a cluster $C\subseteq \go$ such that $p_h(C,\gC)<1$.  
\end{cor}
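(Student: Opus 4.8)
The plan is to mirror the proof of Corollary~\ref{thm:ph_threshold} in its first incarnation (the one following Theorem~\ref{thm:Ewh{D}inv}), now using the $\sqrt{\w}$-degree $D^{\w}(x,\go)$ from \eqref{def:sqrtdeg} and the quantity $A^{\w}$ in place of $\wh{A}^{\,\w}$. So the argument rests entirely on Theorem~\ref{thm:EDinv}, which itself is proved by the ``exact same arguments'' as Theorem~\ref{thm:Ewh{D}inv}: one runs the mass transport scheme where each vertex $x$ redistributes a mass equal to its $\sqrt{\w}$-degree $D^{\w}(x,\go)$ among the vertices of its cluster $C(x)$, proportionally to their $\w^o$-weights, and TMTP together with the identity $\Phi_E^{\w}(G,c)\,D^{\w}+A^{\w}(G)=D^{\w}$ (the $\sqrt{\w}$-analogue of $\iota_E^{\,\w}(G)+\wh{A}^{\w}(G)=d$) gives $\mathbf{E}D^{\w}(x,\go)=\mathbf{E}A^{\w}(C(x))\le A^{\w}(G)=(1-\Phi_E^{\w}(G,c))D^{\w}$ whenever all clusters are a.s.\ light.

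Granting Theorem~\ref{thm:EDinv}, the corollary is a contrapositive plus a monotonicity argument. First I would observe that if $\mathbf{E}D^{\w}(x,\go)/D^{\w}>1-\Phi_E^{\w}(G,c)$, then by Theorem~\ref{thm:EDinv} it is impossible that $\mathbf{P}$-a.s.\ all clusters are light; hence with positive probability $\go$ contains a heavy cluster. For the ``moreover'' part I would condition on the positive-probability event that $\go$ has a heavy cluster, and perform an independent Bernoulli$(p)$ sparsification of $\go$. The key point is that the expected $\sqrt{\w}$-degree of a vertex in the $p$-thinned configuration is $p\cdot\mathbf{E}[D^{\w}(x,\go)\mid\text{heavy cluster present}]$-type quantity, which is continuous (indeed linear) in $p$ and tends to the full $\sqrt{\w}$-degree as $p\to1$; so for $p$ close enough to $1$ the resulting invariant percolation on this conditioned configuration violates the bound of Theorem~\ref{thm:EDinv}, forcing a heavy cluster $C$ in the $p$-thinning, and therefore a cluster $C\subseteq\go$ with $p_h(C,\gC)<1$. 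One should take a little care that the sparsified process remains $\gC$-invariant and that the relevant cluster of $\go$ is itself transitive (or at least carries a transitive subgroup of automorphisms) so that $p_h(C,\gC)$ is meaningful; this is handled exactly as in the original Corollary~\ref{thm:ph_threshold}.

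The main obstacle is not really in the corollary itself — which is a routine repackaging — but in making sure the $\sqrt{\w}$-version of the mass transport identity in Theorem~\ref{thm:EDinv} genuinely goes through unchanged. The delicate point is the computation of the expected received mass: one needs
\[
\mathbf{E}\sum_{y\in C(x)}\frac{\w^o(x)}{\ga_x}D^{\w}(y,\go)\,\w^x(y)=\mathbf{E}\,\ga_x^{-1}\sum_{y\in C(x)}\sum_{z\in C(x),\,(y,z)\in E(\go)}\w^o(y)\sqrt{\w^y(z)}=\mathbf{E}\,A^{\w}(C(x))\,D^{\w},
\]
where the last step uses the cocycle identity $\w^o(y)\sqrt{\w^y(z)}=\sqrt{\w^o(y)\w^o(z)}$ and the definition $A^{\w}(F)=\frac{1}{\w^o(F)}\sum_{(x,y)\in E(F)}\sqrt{\w^o(x)\w^o(y)}/D^{\w}$, being careful with the factor $D^{\w}$ coming from \eqref{def:sqrtdeg} versus \eqref{def:BLPS_D_F}. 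Once this bookkeeping is checked, the TMTP application and the identity $A^{\w}(G)+\Phi_E^{\w}(G,c)D^{\w}=D^{\w}$ (which follows by summing $\sqrt{\w^o(x)\w^o(y)}/D^{\w}$ over all edges at a vertex and comparing with \eqref{eq:edgewCheegerset}) close the argument, and the corollary follows as above.
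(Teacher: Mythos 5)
Your proof follows the paper's approach: the first claim is the contrapositive of Theorem~\ref{thm:EDinv}, and the second comes from the continuity of the expected $\sqrt{\w}$-degree under independent Bernoulli$(p)$ sparsification of $\go$, exactly as in the first incarnation of the corollary. One bookkeeping inconsistency should be fixed: with the paper's definition of $A^{\w}(F)$ (which carries an explicit $1/D^{\w}$), the correct identity is $A^{\w}(G)+\Phi_E^{\w}(G,c)=1$, and the mass transport gives $\mathbf{E}D^{\w}(x,\go)=D^{\w}\,\mathbf{E}A^{\w}(C(x))\le D^{\w}A^{\w}(G)=(1-\Phi_E^{\w}(G,c))D^{\w}$. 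You state this correctly in your last paragraph, but your middle paragraph drops the $D^{\w}$ factor in $\mathbf{E}D^{\w}(x,\go)=\mathbf{E}A^{\w}(C(x))$ and compensates by writing $A^{\w}(G)=(1-\Phi_E^{\w}(G,c))D^{\w}$; the two slips cancel so the final bound is right, but those two intermediate equalities (and the displayed identity $A^{\w}(G)+\Phi_E^{\w}(G,c)D^{\w}=D^{\w}$) are off by a factor of $D^{\w}$.
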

\begin{lem}\label{lem:weightdegineq}
    Let $\Gamma$ be a closed subgroup of $\Aut(G)$ that acts transitively on $G$ and let $\w$ be the induced relative weight function as in \eqref{def:haarweights}. Assume that $G$ is $d$-regular then $
    D^{\w}\le d,
    $
    with equality attained if and only if $\Gamma$ is unimodular.
\end{lem}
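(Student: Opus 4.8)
The plan is to exploit the level decomposition of the neighbours of the reference vertex $o$ that already underlies \eqref{eq:wD=d}. For each $\delta\in\bR^+$ realised as $\w^o(z)$ for some neighbour $z$ of $o$, set $N_\delta:=\abs{\{z\sim o : \w^o(z)=\delta\}}$. By local finiteness only finitely many $\delta$ occur, so there are no convergence issues, and the TMTP (applied to the $\Gamma$-invariant function $f(x,y)=\ind[x\sim y,\ \w^x(y)=\delta]$) gives $\delta N_\delta=N_{1/\delta}$, exactly as in the proof of \eqref{eq:wD=d}. In these terms $D^\w=\sum_\delta\sqrt{\delta}\,N_\delta$ while $d=\sum_\delta\delta\,N_\delta$, so it suffices to compare the two sums level by level.

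First I would pair the level $\delta$ with the level $1/\delta$. Using $N_{1/\delta}=\delta N_\delta$, the contribution of a pair $\{\delta,1/\delta\}$ with $\delta>1$ to $D^\w$ equals $\sqrt{\delta}\,N_\delta+\tfrac{1}{\sqrt{\delta}}N_{1/\delta}=2\sqrt{\delta}\,N_\delta$, whereas its contribution to $d$ equals $\delta N_\delta+\tfrac1\delta N_{1/\delta}=(\delta+1)N_\delta$; the level $\delta=1$ contributes $N_1$ to both sums. Since $(\delta+1)-2\sqrt{\delta}=(\sqrt{\delta}-1)^2\ge 0$ for every $\delta>1$, summing over all pairs yields $D^\w\le d$.

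For the equality case, the computation above shows $D^\w=d$ iff $N_\delta=0$ for every $\delta\neq 1$, i.e.\ every neighbour of $o$ has weight $1$. By transitivity the same holds around every vertex, and then the cocycle identity propagates $\w^o\equiv 1$ along paths; since $G$ is connected this means $\Gamma$ is unimodular. The converse is immediate, as $\Gamma$ unimodular gives $\w\equiv 1$ and hence $D^\w=d$. The only mildly delicate point is this last propagation step, but it is routine given transitivity, connectedness and the cocycle property, so I do not anticipate a genuine obstacle here.
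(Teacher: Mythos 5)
Your proof is correct and takes essentially the same route as the paper: pairing the levels $\delta$ and $1/\delta$ via the TMTP identity $\delta N_\delta=N_{1/\delta}$ and reducing to $(\sqrt{\delta}-1)^2\ge 0$. The only cosmetic difference is that you write $d=\sum_\delta\delta N_\delta$ (invoking \eqref{eq:wD=d}), whereas the paper uses the trivial count $d=\sum_\delta N_\delta$; after pairing, the contributions agree, so the algebra is identical. One small merit of your version: you spell out the ``if'' part of the equality characterization (all neighbours of weight $1$ around every vertex propagates to $\w\equiv 1$ by the cocycle identity and connectedness), which the paper's proof leaves implicit by simply writing ``$>0$'' at the end.
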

\begin{proof}
    Suppose $\delta>1$ such that there is a vertex $z\in V$ that is a neighbor of $o$ and $\w^o(z)=\delta$. Recall the notation $N_{\delta}:=\abs{\{z\sim o \mid \w^o(z)=\delta\}}$
    and that
    \begin{equation}\label{eq:ratioofneighb}
        \delta N_\delta=N_{1/\delta}.
    \end{equation}
    Using this we can rewrite the difference between the degree and $\sqrt{\w}$-degree of $G$ as follows:
    \begin{align*}
        d-D^{\w}&=\sum_{z\sim o}1-\sqrt{\w^o(z)}=\sum_{\delta>1} \left(1-\sqrt{\delta}\right) N_{\delta}+\left(1-\frac{1}{\sqrt{\delta}}\right)N_{1/\delta}\\
        &=\sum_{\delta>1} \left(\delta+1-2\sqrt{\delta}\right)N_{\delta}>0.\tag*{\qedhere}
    \end{align*}
\end{proof}

\begin{lem}\label{lem:symwalk}
     Let $\Gamma$ be a closed subgroup of $\Aut(G)$ that acts transitively on $G$ and let $\w$ be the induced relative weight function as in \eqref{def:haarweights}. Suppose $X^{(\w)}_n$ is the $\sqrt{\w}$-biased random walk on $G$. Then $Y_n:=\log\left(\w^o\left(X^{\w}_n\right)\right)$ is a symmetric random walk on a countable subset of $\bR$.
\end{lem}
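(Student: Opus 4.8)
The plan is to identify the increments of $Y_n$ explicitly and show that they are i.i.d.\ with a symmetric law supported on a countable set. First I would use the cocycle identity $\w^o(y)/\w^o(x)=\w^x(y)$ (a restatement of \eqref{eq:invcoc}, since $\w^o(x)=m(\Gamma_x)/m(\Gamma_o)$) to write
\[
Y_{n+1}-Y_n=\log\w^o\big(X^{(\w)}_{n+1}\big)-\log\w^o\big(X^{(\w)}_{n}\big)=\log\w^{X^{(\w)}_n}\!\big(X^{(\w)}_{n+1}\big).
\]
Thus, conditionally on $X^{(\w)}_n=x$, the increment takes the value $\log\w^x(y)$ with probability $p^{(\w)}(x,y)=(D^{\w})^{-1}\sqrt{\w^x(y)}$ for each neighbor $y$ of $x$ (using \eqref{eq:biaskernel}); call this conditional law $\nu_x$.

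Next I would show that $\nu_x$ does not depend on $x$. Given $x,x'\in V$, transitivity yields $\gamma\in\Gamma$ with $\gamma x=x'$; it maps the neighbors of $x$ bijectively onto the neighbors of $x'$, and by the $\Gamma$-invariance of the cocycle \eqref{eq:invcoc} we have $\w^x(y)=\w^{\gamma x}(\gamma y)=\w^{x'}(\gamma y)$ for every $y\sim x$. Since $D^{\w}$ is independent of the reference vertex (as noted after \eqref{eq:D}), it follows that $\nu_x=\nu_{x'}=:\nu$. Because $X^{(\w)}$ is a Markov chain and $Y_{n+1}-Y_n$ is a function of $(X^{(\w)}_n,X^{(\w)}_{n+1})$ whose conditional law given $\mathcal F_n:=\sigma(X^{(\w)}_0,\dots,X^{(\w)}_n)$ is the fixed measure $\nu$, the increments $(Y_{n+1}-Y_n)_{n\ge 0}$ are i.i.d.\ with law $\nu$ and independent of $Y_0$. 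Hence $Y_n=Y_0+\sum_{k=1}^n(Y_k-Y_{k-1})$ is a random walk on $\bR$; it lives on the countable set $\{\log\w^o(v):v\in V\}$ because $V$ is countable.

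It remains to verify that $\nu$ is symmetric, and here I would compute $\nu=\nu_o$ at the reference vertex. With $N_\delta:=\abs{\{z\sim o\mid \w^o(z)=\delta\}}$, for each $t$ in the (finite) support of $\nu$,
\[
\nu(\{t\})=\sum_{\substack{y\sim o\\ \w^o(y)=e^t}}\frac{\sqrt{\w^o(y)}}{D^{\w}}=\frac{e^{t/2}}{D^{\w}}\,N_{e^t},
\qquad
\nu(\{-t\})=\frac{e^{-t/2}}{D^{\w}}\,N_{e^{-t}}.
\]
Applying the mass-transport identity $\delta N_\delta=N_{1/\delta}$ from \eqref{eq:ratioofneighb} with $\delta=e^t$ gives $N_{e^{-t}}=e^t N_{e^t}$, whence $\nu(\{-t\})=e^{-t/2}(D^{\w})^{-1}e^tN_{e^t}=\nu(\{t\})$. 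This proves symmetry and completes the argument.

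I do not expect a serious obstacle here. The only point requiring care is that the square root in the bias kernel \eqref{eq:biaskernel} is exactly what makes the factors $e^{\pm t/2}$ conspire with $N_{e^{\pm t}}$ through $\delta N_\delta=N_{1/\delta}$ to yield equality of $\nu(\{t\})$ and $\nu(\{-t\})$; with any other power the walk would not be symmetric. One should also phrase the "position-independence of $\nu_x$" step via the Markov property so that it delivers genuine independence of the increments, not merely equality of their marginal laws.
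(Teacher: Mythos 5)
Your argument is correct and uses the same key computation as the paper: writing the one-step probabilities to level $\delta$ in terms of $N_\delta$ and invoking $\delta N_\delta = N_{1/\delta}$ to get symmetry. You are more explicit than the paper about the increment decomposition $Y_{n+1}-Y_n = \log\w^{X_n^{(\w)}}(X_{n+1}^{(\w)})$, the $\Gamma$-invariance giving $\nu_x = \nu$ for all $x$, and the Markov property yielding genuine i.i.d.\ increments, but these are exactly the details the paper leaves implicit, so the approach is the same.
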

\begin{proof}
    The statement follows easily from the following computation.
    The probability that $\sqrt{\w}$-biased random walk makes a step from $o$ to the level with weight $\delta$ can be written as
    \begin{align*}
    \pr_o \left(\w^o\left(X^{(\w)}_1\right)=\delta\right)&=\sum_{z\sim o\atop \w^o(z)=\delta} p^{(\w)}(o,z)=\frac{1}{D^{\w}}\sum_{z\sim o\atop \w^o(z)=\delta}\sqrt{\delta}=\frac{\sqrt{\delta}}{D^{\w}}N_\delta\\
    [\textnormal{ by \eqref{eq:ratioofneighb} }]&=\frac{1}{\sqrt{\delta} D^{\w}}N_{1/\delta}=\frac{1}{D^{\w}}\sum_{v\sim o\atop \w^o(v)=\delta^{-1}}\frac{1}{\sqrt{\delta}}=\sum_{v\sim o\atop \w^o(v)=\delta^{-1}} p^{(\w)}(o,v)\\
    &=\pr_o \left(\w^o\left(X^{(\w)}_1\right)=\frac{1}{\delta}\right).\tag*{\qedhere}
    \end{align*}
\end{proof}

\begin{lem}\label{lem:specrad}
    Let $\Gamma$ be a closed subgroup of $\Aut(G)$ that acts transitively on $G$ and let $\w$ be the induced relative weight function as in \eqref{def:haarweights}. Suppose $\rho(G)$ and $\rho^{\w}(G)$ are the spectral radii of the simple random walk on $G$ and the $\sqrt{\w}$-biased random walk on $G$, respectively. Then
    \[
    \rho(G)\le \rho^{\w}(G).
    \]
\end{lem}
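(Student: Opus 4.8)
\textsc{Proof proposal.} The plan is to compare the two random walks at the level of closed-walk counts. First I would recall from \eqref{eq:biaskernel} that for adjacent $x,y$ we have $p^{(\w)}(x,y)=\frac{1}{D^{\w}}\sqrt{\w^o(y)/\w^o(x)}$, whereas the simple random walk on $G$ has $p(x,y)=1/d$, where $d$ is the (constant, by transitivity) degree of $G$. Both kernels are supported on adjacent pairs, so $p_n(o,o)$ and $p^{(\w)}_n(o,o)$ are sums over exactly the same collection $\mathcal W_n$ of sequences $(o=x_0,x_1,\dots,x_n=o)$ with $x_{i-1}\sim x_i$ for all $i$. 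The key observation is that along any such closed walk the weight factors telescope:
\[
\prod_{i=1}^n p^{(\w)}(x_{i-1},x_i)=\frac{1}{(D^{\w})^n}\prod_{i=1}^n\sqrt{\frac{\w^o(x_i)}{\w^o(x_{i-1})}}=\frac{1}{(D^{\w})^n}\sqrt{\frac{\w^o(x_n)}{\w^o(x_0)}}=\frac{1}{(D^{\w})^n},
\]
since $x_n=x_0=o$, while the same walk has probability $d^{-n}$ under the simple random walk. Summing over $\mathcal W_n$ gives $p^{(\w)}_n(o,o)=\bigl(d/D^{\w}\bigr)^n\,p_n(o,o)$.

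Taking $(2n)$-th roots and $\limsup$ in $n$ then yields $\rho^{\w}(G)=\frac{d}{D^{\w}}\,\rho(G)$. Finally I would invoke Lemma~\ref{lem:weightdegineq}, which gives $D^{\w}\le d$ (with equality exactly when $\Gamma$ is unimodular), so that $d/D^{\w}\ge 1$ and hence $\rho(G)\le\rho^{\w}(G)$, as claimed; moreover equality holds iff $\Gamma$ is unimodular, in which case the two walks literally coincide.

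There is essentially no obstacle here. The only points needing a little care are: (i) transitivity of $G$ forces regularity, so that each length-$n$ closed walk really does carry simple-random-walk probability $d^{-n}$; and (ii) the telescoping identity above depends crucially on the walk being \emph{closed} — for a general $n$-step trajectory the leftover factor $\sqrt{\w^o(x_n)/\w^o(x_0)}$ does not cancel, which is precisely why the clean comparison is with $\rho$ expressed through return probabilities $p_{2n}(x,x)$. (Equivalently one could argue operator-theoretically, conjugating $P^{(\w)}$ on $\ell^2(\w^o)$ by the unitary $f\mapsto f/\sqrt{\w^o}$ to obtain $\frac{d}{D^{\w}}P$ on $\ell^2(V)$, but the closed-walk computation is the most transparent route.)
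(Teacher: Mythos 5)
Your proof is correct, and it is in fact sharper than the paper's: the telescoping identity $p^{(\w)}_n(o,o)=(d/D^{\w})^n\,p_n(o,o)$ is sound (the one preliminary you rely on implicitly is that $\sum_{z\sim x}\sqrt{\w^o(z)}=\sqrt{\w^o(x)}\,D^{\w}$, i.e.\ that $D^{\w}$ is independent of the base vertex, which indeed follows from transitivity as noted just after~\eqref{eq:D}), and together with Lemma~\ref{lem:weightdegineq} it delivers the exact relation $\rho^{\w}(G)=(d/D^{\w})\,\rho(G)$, not merely the inequality.

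The paper takes a genuinely different route. It projects both walks onto the level walk $Y_n:=\log\w^o(X_n)$, uses Lemma~\ref{lem:symwalk} to observe that the $\sqrt{\w}$-biased level walk is symmetric, and then argues in two steps: (a) the symmetric level walk has higher return probabilities than the SRW level walk, and (b) conditionally on the level trajectory, the choice of vertex within each level is the same for both chains, so the comparison lifts from levels back to $G$. These claims are true, but the paper states them rather tersely; if one unwinds them, the precise justification of (a) and the equality in (b) amount to the same exponential-tilting computation you carry out — indeed $\pr(Y^{\w}_{2n}=0)=(d/D^{\w})^{2n}\pr(Y_{2n}=0)$ by the identical telescoping at the level of the walk on $\log$-weights. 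So the two arguments share the same engine, but yours bypasses the level decomposition and the qualitative ``symmetric beats biased'' assertion, replacing them with a one-line algebraic identity. What the paper's version buys is a conceptual picture (the bias is felt only in the vertical direction, and $\sqrt{\w}$-biasing removes it); what yours buys is precision — the equality $\rho^{\w}=(d/D^{\w})\rho$, hence also the equality case of the lemma and, as a byproduct, the nontrivial bound $\rho(G)\le D^{\w}/d<1$ for any nonunimodular $G$.
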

\begin{proof}
    Recall that the level of a vertex $x\in V$ is defined as $$\level(x):=\{y\in V \mid \w^o(y)=1\}.$$ In particular, levels do not depend on the choice of the reference point $o$. Notice that when a simple random walk makes a step from $x\in V$ to a vertex in level $L$ such a vertex is chosen uniformly at random among the neighbors of $x$ that belong to the level $L$, moreover the same is true for the $\sqrt{\w}$-biased random walk. 

    The symmetric random walk $Y_n:=\log\left(\w^o\left(X^{\w}_n\right)\right)$ from Lemma~\ref{lem:symwalk} has greater return probabilities than the one generated by a simple random walk $X_n$ (that is, $\log\left(\w^o\left(X_n\right)\right)$).
    Writing the event that a random walk returned to $o$ as the event it came back to the original level and then choosing $o$ in it from a corresponding distribution, we notice that $\sqrt{\w}$-biasing makes the return probability to the level bigger while keeping the latter distribution unchanged. Thus
    \begin{align*}
        \rho(G)&=\lim\sup_{n\to \infty} p_{2n}(o,o)^{\frac{1}{2n}}\le\sup_{n\to \infty} p_{2n}^{\w}(o,o)^{\frac{1}{2n}}=\rho^{\w}(G).\tag*{\qedhere}
    \end{align*}
\end{proof}

%
\section{Level-amenability and hyperfiniteness}\label{sec:levelamen}
%
Besides weighted-amenability, there are other natural candidates to grasp ``amenable behavior'' of nonunimodular transitive graphs. We define two of these alternatives next. The first one was implicit in \cite[Theorem 5.1]{BLPS99inv}, where it was essentially shown to be equivalent to $\w$-amenability.

\begin{defn}[Hyperfiniteness]\label{def:hyperfinite}
Let $\Gamma$ be a closed subgroup of $\Aut(G)$ that acts transitively on $G$ and $\w$ be the induced relative weight function as in \eqref{def:haarweights}. Suppose $\go$ is an invariant random subgraph of $G$. 
We say that $\go$ is \textbf{hyperfinite} if there is an increasing family random subgraphs $\go_1\subseteq\go_2\subseteq\ldots\subseteq\go_n\subseteq\ldots\subseteq\go$ such that 
\begin{enumerate}
    \item the joint law of $(\go,\{\go_n\}_{n\in\bN})$ is $\Gamma$-invariant,
    \item\label{def:hyperfinite1} $\cup\omega_n=\go$,
    \item\label{def:hyperfinite2} for all $n\ge1$ all clusters are finite $\omega_n$-a.s.\ 
\end{enumerate}

\end{defn}
We will write $\go_n\nearrow \go$ if $\go_n\subseteq\go_{n+1}$ for all $n\in\bN$ and $\cup_n \go_n=\go$.

The above definition can be viewed as an adaptation of the notions of $\mu$-hyperfiniteness for graphings (or for measurable equivalence relations) and unimodular random graphs \cite{URG}.

\begin{lem}\label{lem:finite->light}
    Let $\Gamma$ be a closed subgroup of $\Aut(G)$ that acts transitively on $G$ and $\w$ be the induced relative weight function as in \eqref{def:haarweights}. Then
    \begin{enumerate}
        \item\label{lem:finite->light:wamen} In Definition~\ref{def:wamen} one can replace $``\abs{F}<\infty"$  by $``\w^o(F)<\infty"$, \ie\
        $$
            \quad\Phi^{\w}_V(G):=\inf_{F\subseteq V\atop \abs{F}<\infty}\frac{\w^o(\partial_V F)}{\w^o(F)}=0\quad\textnormal{if and only if}\quad\inf_{F\subseteq V\atop \w^o(F)<\infty}\frac{\w^o(\partial_V F)}{\w^o(F)}=0.
        $$
        \item\label{lem:finite->light:hf} Definition~\ref{def:hyperfinite} remains the same if we replace the second condition by
        \[
        \textnormal{for all $n\ge1$ all clusters are \textit{light} $\omega_n$-a.s.}
        \]
    \end{enumerate}
\end{lem}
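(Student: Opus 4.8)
The two parts are independent; in each, one inclusion is immediate (a set of finite cardinality has finite weight, and a finite cluster is light) and the work is in producing a ``finite'' object from a ``light'' one.

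For part \ref{lem:finite->light:wamen}, the ``only if'' direction is trivial, since the infimum over finite-weight sets is at most $\Phi^{\w}_V(G)$. For the converse I would fix $\eps>0$, take a nonempty $F\subseteq V$ with $0<\w^o(F)<\infty$ and $\w^o(\partial_V F)<\eps\,\w^o(F)$ (sets with $\w^o(\partial_V F)=\infty$ play no role), and, if $F$ is infinite, truncate it by metric balls: $F_n:=\{v\in F:\dist(o,v)\le n\}$ is finite by local finiteness, $\w^o(F_n)\nearrow\w^o(F)$, and every vertex of $\partial_V F_n$ lies either in $\partial_V F$ or in $F$ at distance exactly $n+1$ from $o$, so $\w^o(\partial_V F_n)\le\w^o(\partial_V F)+\w^o(\{v\in F:\dist(o,v)>n\})$, whose last term tends to $0$ because $\w^o(F)<\infty$. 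Then $\limsup_n\w^o(\partial_V F_n)/\w^o(F_n)\le\w^o(\partial_V F)/\w^o(F)<\eps$, so a finite $F_n$ witnesses a ratio $<\eps$; letting $\eps\to0$ gives $\Phi^{\w}_V(G)=0$.

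For part \ref{lem:finite->light:hf}, suppose $\go_n\nearrow\go$ with each $\go_n$ a $\Gamma$-invariant percolation whose clusters are light a.s.; I must build such a family with all clusters \emph{finite}. A light cluster $C$ has finite total weight, hence $W_C:=\sup_{v\in C}\w^o(v)<\infty$, and for any $m$ only finitely many $v\in C$ satisfy $\w^o(v)\ge W_C/m$. I would let $\go_n^{(1/m)}$ keep an edge $xy\in\go_n$ iff $\w^o(x)\ge W_{C_n(x)}/m$ and $\w^o(y)\ge W_{C_n(x)}/m$, where $C_n(x)$ is the $\go_n$-cluster of $x$. Even though $\w^o(\cdot)$ is not $\Gamma$-invariant on a transitive graph, this rule is: an automorphism rescales $\w^o$ on a cluster and its supremum by the same constant, which cancels in the inequality, so $\go_n^{(1/m)}$ is $\Gamma$-invariant; by the above, each of its clusters is contained in the finite ``active'' set of its ambient light $\go_n$-cluster, hence is finite a.s.; and $\go_n^{(1/m)}\nearrow\go_n$ as $m\to\infty$. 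Finally I would diagonalize: $\psi_m:=\bigcup_{j\le m}\go_j^{(1/m)}$. From $\go_j^{(1/m)}\subseteq\go_j^{(1/(m+1))}$ one gets $\psi_m\subseteq\psi_{m+1}$, each $\psi_m$ is $\Gamma$-invariant, and $\bigcup_m\psi_m=\bigcup_j\go_j=\go$.

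The main obstacle is that $\psi_m$ is a union of $m$ finite-cluster configurations, and such a union can in principle have an infinite cluster. I expect to resolve this by induction on $\ell\le m$, showing that $\bigcup_{j\le\ell}\go_j^{(1/m)}$ has finite clusters a.s. In the inductive step all the configurations sit inside $\go_{\ell+1}$, whose clusters are light a.s., and inside a light $\go_{\ell+1}$-cluster $C$ the edges of $\go_{\ell+1}^{(1/m)}$ meet only the finite set $B:=\{v\in C:\w^o(v)\ge W_C/m\}$; hence adjoining $\go_{\ell+1}^{(1/m)}$ merges only those of the (finite, by induction) clusters of $\bigcup_{j\le\ell}\go_j^{(1/m)}$ that meet $B$, at most $\abs{B}$ of them within each $\go_{\ell+1}$-cluster, so the result still has finite clusters. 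The remaining points are routine: measurability of $\go_n^{(1/m)}$ as a function of $\go_n$ (so that a monotone coupling of the $\go_n$ descends to one of the $\psi_m$), and restricting all arguments to the a.s.\ event on which the relevant clusters are genuinely light. The analogous statements for site percolation follow by the same arguments.
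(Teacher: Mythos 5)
Your proof of part \eqref{lem:finite->light:wamen} is correct and follows essentially the same idea as the paper's: take a finite-weight set $F$ with small boundary ratio and truncate it to a finite subset, observing that the boundary weight can increase by at most the weight of the discarded tail. The paper discards an arbitrary cofinite part of small weight; you use metric balls $F_n=F\cap B(o,n)$. Both truncations work for the same reason.

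Your proof of part \eqref{lem:finite->light:hf} is also correct, but takes a genuinely different and more self-contained route than the paper's. The paper picks, in each light $\go_n$-cluster $C$, a uniformly random vertex $z_C$ of maximal weight, exhibits $\go_n$ as hyperfinite via the balls $\eta_m(C)=\{(x,y)\in C:\max(\dist_C(x,z_C),\dist_C(y,z_C))\le m\}$, and then invokes, as a black box, that a countable increasing union of hyperfinite invariant random subgraphs is hyperfinite. You avoid both the extra randomness and the black box: the deterministic active set $B_C=\{v\in C:\w^o(v)\ge W_C/m\}$ plays the role of the ball, and the diagonal $\psi_m=\bigcup_{j\le m}\go_j^{(1/m)}$ with the inductive merging argument directly produces a finite-cluster invariant exhaustion of $\go$. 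The crux is the inductive step: since every new edge of $\go_{\ell+1}^{(1/m)}$ inside a light $\go_{\ell+1}$-cluster $C$ has both endpoints in the finite set $B_C$, only finitely many of the (finite, by induction) clusters of $\bigcup_{j\le\ell}\go_j^{(1/m)}$ get glued together, and the result is finite. In effect you are supplying a proof of the ``increasing union of hyperfinite is hyperfinite'' fact in this setting, which the paper leaves implicit. Your invariance check for $\go_n^{(1/m)}$ (the cocycle factor $\w^o(\gamma o)$ rescales both $\w^o(x)$ and $W_C$, and cancels in the inequality) is also the right observation, and it is what lets you dispense with the random root $z_C$ the paper needs.
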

\begin{proof}
    Since every finite subgraph is automatically light the forward direction of \eqref{lem:finite->light:wamen} is trivial. For the converse direction suppose $F_n\subseteq G$ is a sequence of light subgraphs satisfying $\w^o(\partial_VF_n)\le \eps_n \w^o(F_n)$, for some $\eps_n\searrow0$. Since for every $n$ we have that $\sum_{x\in F_n}\w^o(x)<\infty$ there is a large enough finite set $F'_n$ such that
    $\sum_{x\in F_n\setminus F'_n}\w^o(x)=\delta_n$, for an arbitrarily small $\delta_n\searrow0$. Notice that $\w^o(\partial_VF'_n)\le\w^o(\partial_VF_n) +\delta_n$. Thus,
    \[
    \frac{\w^o(\partial_VF'_n)}{\w^o(F'_n)}\le \frac{\w^o(\partial_VF) +\delta_n}{\w^o(F_n)-\delta_n}\le \frac{\eps_n\w^o(F_n) +\delta_n}{\w^o(F_n)-\delta_n}\to 0 \textnormal{ as } n\to\infty.
    \]
    We now show \eqref{lem:finite->light:hf}. Again, the forward direction is trivial. Suppose now that $\{\go_n\}_{n\in\bN}$ is an increasing family of configurations sampled from $\{\textbf{P}_n\}_{n\in\bN}$ such that $\go_n$ contains only light clusters for all $n\ge1$. Since each light cluster $C$ admits finitely many vertices of maximal weight, choose one of such vertices uniformly at random and denote it by $z_C$. Construct a sequence of bond percolation configurations $\eta_m(C)$ by
    \[
    (x,y)\in \eta_m(C) \Leftrightarrow (x,y)\in C \textnormal{ and } \max\left\{\dist_C(x,z_C),\dist_C(y,z_C)\right\}\le m.
    \]
    By local finiteness of $G$ for all $m\in\bN$ all clusters in $\eta_m(C)$ are finite and $\eta_m(C)\nearrow C$. Notice that $\eta_m(\go_n):=\bigcup_C \eta_m(C)$ witnesses hyperfiniteness of $\go_n$. The conclusion now follows from the fact that the countable increasing union of hyperfinite graphs is hyperfinite.
\end{proof}

This notion of hyperfiniteness for a random subgraph of $G$ is connected to hyperfiniteness from measure group theory via the cluster graphing construction (see Subsection~\ref{sec:mgt}).

\begin{lem}\label{lem:hf=mu_hf}
    Let $\Gamma$ be a closed subgroup of $\Aut(G)$ that acts transitively on $G$ and $\w$ be the induced relative weight function as in \eqref{def:haarweights}. Suppose $\go$ is an invariant random subgraph of $G$. Then $\go$ is hyperfinite if and only if $\go$ admits a cluster graphing $\cG^{\mathrm{cl}}$ on a standard probability space $(Y,\nu)$ that is $\nu$-hyperfinite.
\end{lem}
\begin{proof}
    $\Rightarrow:$ Let $\go_n$ be a sequence of random subgraphs that witnesses hyperfiniteness of $\go.$ Let $\mathbf{P}$ denote the joint law of $(\go,\{\go_n\}_{n\in\bN})$ and consider the \textit{joint cluster graphing}, that is the same construction as in the summary at the end Subsection~\ref{sec:mgt}, but started with an ergodic free pmp action of $\Gamma$ on the standard
    probability space $(X,\mu)$ that factors onto $\left((2^E)^\bN, \mathbf{P}\right)$. This yields an increasing family of cluster graphings $\cG^{\mathrm{cl}}_{\go_n}$ on $(Y,\nu)$ with finite connected components on a $\nu$-conull set (because each of these connected components is isomorphic to a cluster in the corresponding configuration $\go_n$), and $\cG^{\mathrm{cl}}_{\go}=\bigcup_n\cG^{\mathrm{cl}}_{\go_n}$. Thus $\cG^{\mathrm{cl}}_{\go}$ is $\nu$-hyperfinite.
    
    $\Leftarrow:$ Suppose the cluster graphing $\cG^{\mathrm{cl}}_{\go}$ of $\go$ is an increasing union of Borel graphs $\{\cG_n\}_{n\in\bN}$ whose connected components are finite. By Observation~\ref{obs:cluster} each $\cG^{\mathrm{cl}}_{\go}$-connected component of $\wt{x}=[(x,o)]_\Gamma\in Y$ is isomorphic to the connected component of $o$ in $\pi(x)$, thus the images under this isomorphism of the $\cG_n$ yield an increasing exhausting family of invariant random  subgraphs of $\go$. In particular, $\go$ is hyperfinite.
\end{proof}

To define level-amenability of a graph we first introduce some terminology. Recall that the level of a vertex $x\in V$ is defined as $
\level(x):=\{y\in V \mid \w^x(y)=1\}.$ A subgraph induced by a finite union of levels is called a \textbf{slice}. The idea to characterize the behavior of percolation on nonunimodular graphs by the existence of nonamenable slices first appeared in \cite{Timar06nonu}. We formalize it here via the following definition.

\begin{defn}
    We say that $G$ is \textbf{level-amenable} if any slice of $G$ is amenable, \ie~every connected component in the slice is an amenable graph.
\end{defn}
Notice that $\w^o(\cdot)$ restricted to a slice takes only finitely many values, thus each slice is a quasi-transitive \textit{unimodular} graph (see \cite[Proposition 4.3]{Pengfei18} for a proof).

\begin{obs}\label{obs:levels}
    Let $\Gamma$ be a closed subgroup of $\Aut(G)$ that acts transitively on $G$ and $\w$ be the induced relative weight function as in \eqref{def:haarweights}. Define the \textbf{level graph} $\cL(G)$ as the graph on the set of levels of $G$ with edges induced by the adjacency in $G$. Then $\cL$ is the Cayley graph of a finitely generated Abelian group. In particular, it is hyperfinite.
\end{obs}

Given an invariant random partition of the set of levels of $G$, we define a \textbf{level-percolation} on $G$ is defined on by retaining only those edges whose endpoints are in levels in the same class in the partition. 
A hyperfinite exhaustion of the level graph $\cL(G)$ defines an increasing family of invariant random partitions such that each class is a slice and their union is $G$. We are now ready to prove Theorem \ref{thm:levelamen-hf}.

\begin{proof}[Proof of Theorem~\ref{thm:levelamen-hf}]
As we mentioned above $\eqref{thm:levelamen-hf:amen}\Leftrightarrow\eqref{thm:levelamen-hf:hf}$ was essentially shown in \cite[Theorem 5.1]{BLPS99inv} (see Theorem~\ref{thm:BLPShf}), in fact, their proof gives that a $\w$-amenable graph $G$ admits a factor of iid hyperfinite exhaustion.  In the context of measured group theory the equivalence of $\mu$-hyperfiniteness and $\mu$-amenability is classical \cite{CFW}, including the  characterization via the weighted isoperimetric constant \cite{kaimanovich1997amenability} for bounded degree Borel graphs.

$\eqref{thm:levelamen-hf:amen}\Rightarrow\eqref{thm:levelamen-hf:lvl}:$ Fix $\eps>0$, suppose $F\subset G$ is of finite weight and is such that $\w^{o}(\partial_VF)/\w^{o}(F)<\eps.$ Consider a slice $S$ an let $h$ be the number of levels contained in it. By transitivity of $G$ there is a family of automorphisms $\gamma_i\in\gC$ such that for any two levels there is exactly one $\gamma_i$ that maps the first one to the second one.
The union of $\gamma_i S$ covers $G$ by copies of $S$ such that every level of $G$ is contained in exactly $h$ many copies of $S$. Denote $F_i:=F\cap \gamma_i S$ and notice that 
    \[
    \sum_{i}\frac{\w((\partial_V F_i) \cap \gamma_i S)}{\w(F_i)}\cdot\frac{\w(F_i)}{h\w(F)}=\sum_{i}\frac{\w((\partial_V F_i) \cap \gamma_i S)}{h\w(F)}\le \frac{\w (\partial_V F)}{\w(F)}<\eps.
    \]
Now since $\sum_{i}\w(F_i)/(h\w(F))=1$ we conclude that there is some $i$ such that 
    \[
    \frac{\w((\partial_V F_i) \cap \gamma_i S)}{\w(F_i)}<\eps.
    \]
Thus the same holds for $\gamma_i^{-1}F_i\subset S$. Taking arbitrarily small $\eps$ yields that $S$ is $\w$-amenable. Since $S$ is unimodular, it is then also amenable.
    
$\eqref{thm:levelamen-hf:lvl}\Rightarrow\eqref{thm:levelamen-hf:hf}:$ Consider an increasing sequence of level-percolations on $G$ defined by a hyperfinite exhaustion of the level graph $\cL(G)$. Each such percolation splits $G$ into a disjoint union of slices, all of which are amenable unimodular quasi-transitive graphs. By Theorem~\ref{thm:BLPShf} they are hyperfinite. Since $G$ is the increasing union of hyperfinite graphs it is also hyperfinite.


$\eqref{thm:levelamen-hf:hf}\Rightarrow\eqref{thm:levelamen-hf:amen}:$ Suppose $G$ is hyperfinite, as witnessed by a sequence of processes $\{\textbf{P}_n\}_{n\in\bN}$. Let $\{\go_n\}_{n\in\bN}$ be configurations sampled from $\{\textbf{P}_n\}_{n\in\bN}$. Since $\{\textbf{P}_n\}_{n\in\bN}$ are stochastically increasing, we may assume that $\go_n\nearrow G$. Without loss of generality, we may assume that for any neighboring vertices $u$ and $v$ in $G$ that are also in the same cluster $\go_n$, we have that the edge $(u,v)\in \go_n$, because after imposing such a requirement percolation remains $\Gamma$-invariant and the components of the $\go_n$ have the same, light vertex sets. Thus, denoting the expectation with respect to $\textbf{P}_n$ by $\textbf{E}_n$, we have for any $x\in V$
     \[
     \textbf{E}_{n} \wh{D}^{\w}(x,\go_n)\to \wh{D}^{\w}(x,G)=d \quad\textnormal{as}\quad n\to\infty,
     \]
     where $\wh{D}(\cdot,\cdot)$ denotes the the weighted degree as in \eqref{def:BLPS_D_F} and $d$ is the degree in $G$. 
     The conclusion now follows, from Theorem~\ref{thm:Ewh{D}inv}. 
\end{proof}
\begin{cor}\label{cor:wamen-treeable}
Let $\Gamma$ be a closed subgroup of $\Aut(G)$ that acts transitively on $G$ and $\w$ be the induced relative weight function as in \eqref{def:haarweights}. Suppose $G$ is $\w$-amenable. Then $G$ admits a $\Gamma$-invariant random spanning tree.
\end{cor}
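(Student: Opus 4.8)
The plan is to obtain the tree directly from hyperfiniteness. By Theorem~\ref{thm:levelamen-hf}, $\w$-amenability implies that $G$ is hyperfinite, and (as in the proof of that theorem, via Bernoulli level-percolation together with the bond version of Theorem~\ref{thm:BLPShf}) this can be witnessed by a monotone sequence of $\Gamma$-invariant \emph{bond} percolations $\go_1\subseteq\go_2\subseteq\cdots$ with $\go_n\nearrow G$ and with all clusters of each $\go_n$ finite almost surely. I would then enlarge the probability space by an independent family $(U_e)_{e\in E}$ of i.i.d.\ $\mathrm{Uniform}[0,1]$ labels on which $\Gamma$ acts by permuting coordinates; the joint law of $\big((\go_n)_n,(U_e)_e\big)$ remains $\Gamma$-invariant and, since $E$ is countable, the $U_e$ are almost surely pairwise distinct.

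Next I would build an increasing sequence of forests $T_0\subseteq T_1\subseteq\cdots$ with $E(T_n)\subseteq E(\go_n)$ — setting $\go_0:=\varnothing$ and $T_0:=\varnothing$ — such that, viewing $T_n$ as a spanning subgraph of $G$, its connected components are exactly the clusters of $\go_n$. For the inductive step, fix a cluster $C$ of $\go_n$: it is the disjoint union of finitely many clusters $C_1,\dots,C_k$ of $\go_{n-1}$, each already spanned by one of the trees of $T_{n-1}$. Contract each $C_i$ to a single vertex to form the finite connected multigraph $H_C$ whose edges are the $\go_n$-edges joining distinct $C_i$'s, let $S_C$ be the (almost surely unique) minimum-weight spanning tree of $H_C$ with respect to $(U_e)$, lift its edges back to $G$, and set $T_n:=T_{n-1}\cup\bigcup_C S_C$, the union over clusters $C$ of $\go_n$. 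One checks immediately that $T_n$ is again a forest with $E(T_n)\subseteq E(\go_n)$ whose components are the clusters of $\go_n$. Finally put $T:=\bigcup_n T_n$: it is acyclic because any cycle would lie in some (acyclic) $T_n$, and it is connected because $\go_n\nearrow G$ with $G$ connected forces any two vertices into a common cluster of $\go_n$, hence into a common component of $T_n\subseteq T$, for $n$ large. Thus $T$ is a spanning tree of $G$, and since each $T_n$ is a $\Gamma$-equivariant deterministic function of $\big((\go_m)_m,(U_e)_e\big)$ (contraction commutes with the $\Gamma$-action and the minimum spanning tree is determined by the labels), the law of $T$ is $\Gamma$-invariant.

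Essentially all of the work is packaged into Theorem~\ref{thm:levelamen-hf}; the two points requiring a little care are (i) taking the hyperfinite exhaustion by \emph{bond} rather than site percolations, so that each cluster carries a genuine finite connected subgraph on which to build a tree, and (ii) making the per-cluster choice of spanning tree $\Gamma$-equivariant, which the external i.i.d.\ labels together with the minimum-spanning-tree rule handle — note that the naive choice ``minimum spanning tree of each $\go_n$-cluster'' would fail to be monotone in $n$, which is exactly why the inductive contraction is needed. I do not anticipate a genuine obstacle here: this is the standard ``increasing union of finite equivalence relations yields an invariant treeing'' argument, carried out in the present weighted setting.
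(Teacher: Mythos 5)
Your argument is correct and matches the paper's own proof in structure: both derive hyperfiniteness from Theorem~\ref{thm:levelamen-hf}, fix a monotone exhaustion $\go_n \nearrow G$ by $\Gamma$-invariant percolations with finite clusters a.s., and build the spanning tree inductively by extending the forest spanning the $\go_{n-1}$-clusters to one spanning the $\go_n$-clusters. The only difference is cosmetic: the paper selects the extension edges sequentially uniformly at random (adding an edge iff it creates no cycle), whereas you use external i.i.d.\ edge labels and a minimum-spanning-tree rule on the contracted cluster multigraph; both devices make the per-cluster choice $\Gamma$-equivariant, and your remark that the naive ``MST of each $\go_n$-cluster'' would break monotonicity is exactly the observation motivating the inductive extension in the paper as well.
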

\begin{proof}
    By Theorem~\ref{thm:levelamen-hf}, $G$ is hyperfinite. Consider a sequence of measures $\{\textbf{P}_n\}_{n\in\bN}$ that witnesses hyperfiniteness, and an increasing percolation configurations $\go_n\nearrow G$ sampled from them, such that every cluster of $\go_n$ is finite.
We build the spanning tree inductively. Assuming that every cluster of $\go_k$ is spanned by a tree, we extend it by spanning each cluster of $\go_{k+1}$ independently of each other. For each such cluster $C\in \go_{k+1}$ sequentially choose an edge from $C\setminus \go_{k}$ uniformly at random and add it if it does not create a cycle until $C$ is spanned by a tree. Since the $\{\textbf{P}_n\}_{n\in\bN}$ are $\Gamma$-invariant and we used only uniform choice conditioned on the sample from such measures, thus the resulting tree also has a $\Gamma$-invariant law.
\end{proof}

The following easy corollary will be handy in a later construction.
\begin{cor}\label{cor:inv-slice}
Let $\Gamma$ be a closed subgroup of $\Aut(G)$ that acts transitively on $G$ and $\w$ be the induced relative weight function as in \eqref{def:haarweights}. If $G$ is $\w$-nonamenable, then there is an invariant level-percolation on $G$ that a.s.\ contains a connected component that is a nonamenable quasi-transitive unimodular graph.
\end{cor}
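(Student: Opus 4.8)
The plan is to invoke Theorem~\ref{thm:levelamen-hf} to extract an explicit nonamenable component of a slice, and then to produce a finite, positive-probability pattern of level-labels that turns this component into a full cluster of Bernoulli$(p)$ level-percolation. Concretely: since $G$ is $\w$-nonamenable, Theorem~\ref{thm:levelamen-hf} gives that $G$ is not level-amenable, so some slice of $G$ has a nonamenable connected component $C$. As slices are finite unions of levels, the set $J$ of levels meeting $C$ is finite, $C$ is a connected component of the slice $S_J$ spanned by $\bigcup_{L\in J}L$, and $C$ is a quasi-transitive unimodular graph, being a component of a slice (cf.\ the remark before Theorem~\ref{thm:levelamen-hf} and \cite[Proposition 4.3]{Pengfei18}).

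Next I would check that only finitely many levels are $G$-adjacent to $C$. Let $\Gamma_{\level}:=\{\gamma\in\Gamma:\w^o(\gamma o)=1\}$, the kernel of the modular homomorphism $\gamma\mapsto\w^o(\gamma o)$. This subgroup fixes every level setwise, and it acts transitively on each level (immediately for $\level(o)$, which is an orbit of $\Gamma_{\level}$, and in general since $\Gamma_{\level}$ is normal in $\Gamma$). Hence its subgroup $\{\gamma\in\Gamma_{\level}:\gamma C=C\}$ acts transitively on $C\cap L$ for each $L\in J$, so quasi-transitively on $C$; choosing finitely many orbit representatives $x_1,\dots,x_m\in C$, every vertex of $C$ is a $\Gamma_{\level}$-image of some $x_i$, so all of its $G$-neighbours lie in the fixed finite family $\mathcal L$ of levels containing the neighbours of $x_1,\dots,x_m$. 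Put $J':=\mathcal L\setminus J$, a finite set.

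Finally, in Bernoulli$(p)$ level-percolation, with probability at least $p^{|J|}(1-p)^{|J'|}>0$ every level in $J$ gets label $1$ and every level in $J'$ gets label $0$; on this event the level-percolation cluster of any $v\in C$ equals $C$. Indeed, every edge of $C$ survives, since its endpoints lie in $J$-levels; and no surviving edge leaves $C$, because a $G$-neighbour $u'\notin C$ of some $u\in C$ lies either in a level of $J$ --- impossible, as then $u'$ would belong to the $S_J$-component $C$ --- or in a level of $J'$, which is closed. So on this positive-probability event a component of the level-percolation is $C$, a nonamenable quasi-transitive unimodular graph, which is the assertion. (When $\Gamma$ is nonunimodular there are infinitely many levels, on which $\Gamma$ acts transitively, so the $\Gamma$-invariant event that some level-percolation component is nonamenable is governed by an ergodic generalized Bernoulli shift and in fact holds almost surely; no such improvement is possible in the unimodular case, where level-percolation is degenerate.)

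The step that needs genuine care is the middle one: verifying that a nonamenable component $C$ of a slice is itself quasi-transitive, hence borders only finitely many levels, so that its survival can be forced by fixing finitely many level-labels. Everything else is routine bookkeeping with the cocycle $\w$.
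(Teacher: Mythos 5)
Your proof is correct; the paper states this corollary without giving a proof, and your argument — extract a nonamenable slice-component $C$ via Theorem~\ref{thm:levelamen-hf}, then isolate it with a finite, positive-probability pattern of level-labels — is surely the intended one. The step you flag as needing care, that $C$ borders only finitely many levels (established via the quasi-transitive action of $\{\gamma\in\Gamma_{\level}:\gamma C=C\}$ on $C$, where $\Gamma_{\level}=\ker(\gamma\mapsto\w^o(\gamma o))$ acts transitively on each level), is the genuine content and is handled correctly; as a byproduct this also re-derives directly that $C$ is quasi-transitive and unimodular, since for $x\in C$ the stabilizer $\Gamma_x$ already lies in $\Gamma_{\level}$ and fixes $C$, so the modular function of $\{\gamma\in\Gamma_{\level}:\gamma C=C\}$ on $C$ agrees with $\w_\Gamma$ restricted to a single level and is therefore $\equiv 1$.
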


\section{Invariant random spanning trees and forests}\label{sec:invarforest}

An essential tool for many percolation results in the unimodular setting is \cite[Proposition 7.2]{BLPS99inv}. It connects the number of ends of an invariant random spanning forest $\cF$ to $p_c(\cF)$, and to the average degree in the infinite components of $\cF$. In addition to their applications in percolation theory,
invariant spanning forests also played an important role in measured group theory, especially in the pmp setting. Famously, this machinery enabled Gaboriau and Lyons to prove a positive answer to a measure theoretic version of the Day--von Neumann conjecture \cite{Gaboriau-Lyons}. As we highlighted in Subsection~\ref{sec:mgt}, extensions of this theory to the mcp setting have received an increasing amount of attention in recent years. In this section, we consider invariant random spanning trees and forests in the context of $\w$-amenability and nonunimodular transitive graphs.

First, we recall that the \textbf{Free Minimal Spanning Forest} on the graph $G$ is a random subforest that is constructed as follows:

\begin{itemize}
    \item Let $\{U_e\}_{e\in E}$ be a collection of independent random variables with $\mathrm{Uniform}[0,1]$ distribution. Notice that almost surely we have $U_e\neq U_{e'}$ for each pair of distinct edges $e$ and $e'$.
    
    \item For each cycle in $G$, delete the edge $e$ with the largest value of the label $U_e$. 
\end{itemize}
In other words, for each $e \in E(G)$, we have $e \in \mathrm{FMSF}(G)$ if and only if each cycle containing $e$ also contains another edge $e'$ with $U_e < U_{e'}$.

In \cite{FmaxSF22} the authors introduced a weighted analog of the Free Minimal Spanning Forest (FMSF) called the Free $\w$-Maximal Spanning Forest, denoted as $\fmax_{\w}$, and showed that it naturally extends many desired properties of FMSF to the nonunimodular setting. 
Given a relative weight function $\w^o:V\to\bR^+$ with respect to some reference vertex $o\in V$ the \textbf{Free $\w$-Maximal Spanning Forest} $\fmax_{\w}(G)$ is constructed in a similar fashion:

\begin{itemize}
    \item For each edge $e=(u,v)\in E(G)$ define $\w^o(e)=\min(\w^o(u),\w^o(v))$.
    \item Let $\{U_e\}_{e\in E}$ as above.
    \item For each cycle in $G$, delete the edge $e$ with the lowest weight $\w^o(e)$, using $U_e$ as a tie-breaker (deleting the edge with the largest value).
\end{itemize}

In other words, for every cycle in $G$ select the set of edges that contain vertices with the smallest $\w$-weight and delete the one with the largest $U_e$ label. Notice that when $G$ is unimodular we have that $\fmax_{\w}(G) = \FMSF(G)$. 

First we would like to highlight some of the results from \cite{FmaxSF22}. We present the first one for Bernoulli$(p)$ percolation, however, the original result holds more generally for insertion and deletion tolerant $\Gamma$-invariant percolation processes see \cite[Theorem 4.16 and Remark 4.17]{FmaxSF22}.

\begin{thm}[{\cite{FmaxSF22}}]\label{thm:CorBernPerc}
Let $\Gamma$ be a closed subgroup of $\Aut(G)$ that acts transitively on $G$ and $\w$ be the induced relative weight function as in \eqref{def:haarweights}. Suppose $p\in[0,1]$ is such that there are infinitely many heavy clusters $\pr_p$-a.s. Then for $\pr_p$-a.e.\ configuration $\omega$, for every heavy cluster $C \subseteq\omega$, the random forest $\fmax_{\w}(C)\subseteq\fmax_{\w}(\omega)$ a.s.\ contains a tree $T \subseteq C$ with infinitely many $\w$-nonvanishing ends, none of which is isolated.
\end{thm}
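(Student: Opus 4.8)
The plan is to localise the statement to a single heavy cluster, to show that such a cluster must carry at least three $\w$-nonvanishing ends, and then to feed this into Theorem~\ref{thm:FmaxSF}. First I would record the reduction: every cycle of a configuration $\omega$ lies inside one cluster, so $\fmax_{\w}(\omega)$ restricted to a cluster $C$ coincides with $\fmax_{\w}(C)$ computed intrinsically (with the ambient weights restricted to $C$); and every end of a \emph{light} cluster is $\w$-vanishing, so light clusters contribute no tree with a $\w$-nonvanishing end and may be discarded. Hence it suffices to show that for $\pr_p$-a.e.\ $\omega$ and a.e.\ heavy cluster $C$, the forest $\fmax_{\w}(C)$ contains a tree whose set of $\w$-nonvanishing ends is nonempty and perfect; since there are infinitely many heavy clusters $\pr_p$-a.s., this at once upgrades to infinitely many such trees.

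Second, I would prove that $\pr_p$-a.s.\ every heavy cluster has at least three $\w$-nonvanishing ends. By Corollary~\ref{lem:noends} every heavy cluster has at least one, and by Tang's indistinguishability of heavy clusters \cite{Pengfei18} the number $N\in\{1,2,\dots,\infty\}$ of $\w$-nonvanishing ends of a heavy cluster is $\pr_p$-a.s.\ the same for all of them. Suppose $N\le 2$. By a standard insertion-tolerance argument one may, with positive $\pr_p$-probability, open a finite path of $G$ joining two distinct heavy clusters $C_1$ and $C_2$; in the resulting configuration the merged cluster is heavy and its $\w$-nonvanishing ends form exactly the disjoint union of those of $C_1$ and of $C_2$ (one connecting path creates no new end), so it has at least two. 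If $N=1$ this contradicts $N=1$; if $N=2$, then each $C_i$ has exactly one $\w$-nonvanishing end, so the merged cluster has exactly two, again contradicting $N=1$, whereas merging two clusters each with two $\w$-nonvanishing ends produces one with four, contradicting $N=2$. In all cases we reach a contradiction, so $N\ge 3$.

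Third, I would pass to the cluster graphing $\cG$ of the union of the heavy clusters under $\pr_p$ (as in Subsection~\ref{sec:mgt}, with the invariant percolation taken to be the heavy part of Bernoulli$(p)$); its Radon--Nikodym cocycle is $\w$ by Observation~\ref{obs:cluster}, and a.e.\ $\cG$-component is isomorphic to a heavy cluster, hence has $\ge 3$ $\w$-nonvanishing ends by the previous step. Theorem~\ref{thm:FmaxSF} then yields a Borel subforest of $\cG$ whose a.e.\ component has a nonempty perfect space of $\w$-nonvanishing ends. To transfer this to $\fmax_{\w}$ itself, I would use that $\fmax_{\w}(C)$ favours heavy vertices: for each level $t$, a cycle of $C$ that leaves the subgraph of $C$ induced on the vertices of weight exceeding $t$ contains a cheaper edge and is cut there first, so every connected component of that subgraph is spanned by a subtree of $\fmax_{\w}(C)$. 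Consequently $\fmax_{\w}(C)$ has a $\w$-nonvanishing ray converging to each $\w$-nonvanishing end of $C$ approached along a ``thick'' component, so the perfect set of $\w$-nonvanishing ends furnished by Theorem~\ref{thm:FmaxSF} is inherited (possibly spread over several trees of $\fmax_{\w}(C)$); a routine mass-transport argument in $\cG$ then excludes isolated $\w$-nonvanishing ends and finiteness of their number in an invariant subforest, giving a tree of $\fmax_{\w}(C)$ with nonempty perfect $\w$-nonvanishing end space. Running this over the infinitely many heavy clusters finishes the proof.

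The step I expect to be the main obstacle is this last transfer: that it is $\fmax_{\w}$, and not merely the auxiliary subforest produced by Theorem~\ref{thm:FmaxSF}, that carries a perfect set of $\w$-nonvanishing ends. This is a ``domination'' statement — that $\fmax_{\w}$ realises enough of the $\w$-nonvanishing ends of its ambient heavy cluster — and is essentially the internal content of \cite{FmaxSF22}; the greedy/invasion description of $\fmax_{\w}$ together with the mass-transport bookkeeping for ends are what make it delicate. By comparison, the reduction to a single cluster and the end-counting via indistinguishability and insertion tolerance are routine.
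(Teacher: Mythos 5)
The paper does not actually prove this statement: it is imported verbatim from \cite[Corollary 4.19]{FmaxSF22}, with only the remark that the earlier version of that manuscript assumed $p\in(p_h,p_u)$ and that the proof survives the relaxation. So there is no internal argument to compare yours against, and I have to assess your sketch on its own.

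Your first two steps are sound in spirit. The localisation to a single heavy cluster (cycles live inside clusters, light clusters only have $\w$-vanishing ends) is correct, and the claim that $\fmax_{\w}(\omega)$ restricted to a cluster $C$ equals $\fmax_{\w}(C)$ is right. The argument that every heavy cluster has infinitely many (not just $\ge 3$) $\w$-nonvanishing ends, via Corollary~\ref{lem:noends}, Tang's indistinguishability, and insertion tolerance, is the standard route; the case analysis in the middle of that paragraph is garbled (``if $N=2$, then each $C_i$ has exactly one $\w$-nonvanishing end'' is false and the following clause contradicts $N=1$ when it should contradict $N=2$), but the correct argument is clearly what you meant: if $N<\infty$ then merging gives a heavy cluster with $2N\ne N$ such ends, so $N=\infty$.

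The genuine gap is the transfer step, as you yourself flag. Theorem~\ref{thm:FmaxSF} produces \emph{some} Borel subforest $\cF\subseteq\cG$ with perfect $\w$-nonvanishing end sets, but says nothing about the particular forest $\fmax_{\w}$. Your ``thick component'' observation — that for every $t$, the forest $\fmax_{\w}(C)$ restricted to the subgraph $H_t$ of vertices with weight $>t$ is a spanning forest of $H_t$ — is correct (it is exactly Kruskal run on the heaviest edges first). But it does not yield what you assert next. A $\w$-nonvanishing end $\xi$ of $C$ is witnessed by a sequence $x_n\to\xi$ with $\w(x_n)>t$; nothing forces infinitely many of the $x_n$ to lie in a single component of $H_t$. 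If they are scattered across infinitely many components, your argument produces infinitely many disjoint finite subtrees, not a ray of $\fmax_{\w}(C)$ converging to $\xi$. So the step ``$\fmax_{\w}(C)$ has a $\w$-nonvanishing ray converging to each $\w$-nonvanishing end of $C$ approached along a thick component'' does not follow, and the promised mass-transport bookkeeping that would rule out such scattering is never specified. This is precisely the delicate domination statement — that $\fmax_{\w}$, rather than the abstract subforest from Theorem~\ref{thm:FmaxSF}, realises a perfect set of $\w$-nonvanishing ends — and it is where the actual content of \cite{FmaxSF22} lives. Citing ``the internal content of \cite{FmaxSF22}'' at this point is circular, since that is exactly the result you set out to prove.

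In short: steps one and two are fine modulo wording, but the proof as written does not close the gap between the existential subforest of Theorem~\ref{thm:FmaxSF} and the specific forest $\fmax_{\w}$. Without that, the argument establishes only that the cluster graphing of the heavy clusters is $\mu$-nonamenable, not that $\fmax_{\w}$ itself has the stated end structure.
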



\begin{thm}[{\cite[Corollary 5.9]{FmaxSF22}}]\label{thm:cluster-graphing_nonamenable}
Let $\Gamma$ be a closed subgroup of $\Aut(G)$ that acts transitively on $G$ and $\w$ be the induced relative weight function as in \eqref{def:haarweights}.
Let $\mathbf{P}$ be a $\Gamma$-invariant percolation such that with positive probability there a cluster with $\ge 3$ $\w$-nonvanishing ends.
Then the cluster graphing $\cG^{\mathrm{cl}}$ with the measure induced by $\mathbf{P}$ is $\mu$-nonamenable.
In fact, it contains a $\mu$-nonamenable Borel subforest.
\end{thm}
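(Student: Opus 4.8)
The plan is to derive this from the main result of \cite{FmaxSF22}, Theorem~\ref{thm:FmaxSF}, applied to a restriction of the cluster graphing $\cG^{\mathrm{cl}}$ constructed in Subsection~\ref{sec:mgt}. Recall that $\cG^{\mathrm{cl}}$, equipped with the measure $\wh\mu$ induced by $\mathbf{P}$, is a locally finite mcp Borel graph whose Radon--Nikodym cocycle with respect to $\wh\mu$ is the relative weight function $\w$ read off at the roots, and whose $\go$-component is isomorphic --- compatibly with the weights --- to the $\go$-cluster of the root $o$ in $G$; this is the content of Observation~\ref{obs:cluster}. The only discrepancy with the hypothesis of Theorem~\ref{thm:FmaxSF} is that here we merely assume that \emph{some} cluster has $\ge 3$ $\w$-nonvanishing ends almost surely, rather than that \emph{a.e.}\ component does; this will be bridged by passing to an invariant Borel subset.

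First I would record that the event $B_o$ that the cluster of the root $o$ has $\ge 3$ $\w$-nonvanishing ends satisfies $\mathbf{P}(B_o)>0$. Indeed, if $\mathbf{P}(B_o)=0$, then $\Gamma$-invariance and transitivity of $\mathbf{P}$ give $\mathbf{P}(B_v)=0$ for every $v\in V$ --- here one uses that $\w$-nonvanishingness of an end is preserved by the elements of $\Gamma$, by the cocycle identity~\eqref{eq:invcoc} --- and then, since $V$ is countable, the event $\bigcup_{v\in V}B_v$, which is exactly the event that some cluster has $\ge 3$ $\w$-nonvanishing ends, would be null, contradicting the hypothesis. Transporting $B_o$ across the cluster-graphing correspondence, the set $A\subseteq\wh\gO$ of configurations whose $\cG^{\mathrm{cl}}$-component has $\ge 3$ $\w$-nonvanishing ends is an $\cR^{\mathrm{cl}}$-invariant Borel set with $\wh\mu(A)>0$.

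Next I would restrict to $A$. The graph $\cG^{\mathrm{cl}}|_A$, with the normalized restriction of $\wh\mu$, is still a locally finite mcp Borel graph whose Radon--Nikodym cocycle is $\w$, and now a.e.\ component has $\ge 3$ $\w$-nonvanishing ends. Theorem~\ref{thm:FmaxSF} then supplies a Borel subforest $\cF_0\subseteq\cG^{\mathrm{cl}}|_A$ for which a.e.\ $\cF_0$-component has nonempty perfect --- hence infinite --- space of $\w$-nonvanishing ends, and it also asserts that $\cG^{\mathrm{cl}}|_A$ is $\mu$-nonamenable. Applying Theorem~\ref{thm: Anush-Robin} to the forest $\cF_0$, whose components each have infinitely many $\w$-nonvanishing ends, shows that $\cF_0$ is $\mu$-nonamenable as well.

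Finally I would pull these two conclusions back from $A$ to all of $\wh\gO$. Extending $\cF_0$ trivially over $\wh\gO\setminus A$ produces a Borel subforest $\cF\subseteq\cG^{\mathrm{cl}}$ with $\cF|_A=\cF_0$. Since $\mu$-hyperfiniteness is inherited by restrictions to Borel subsets --- a countable increasing exhaustion by sub-equivalence relations with finite classes restricts to one --- non-hyperfiniteness of $\cR^{\mathrm{cl}}|_A$ forces $\cR^{\mathrm{cl}}$ to be $\mu$-nonamenable, and non-hyperfiniteness of $\cF_0$ forces $\cF$ to be $\mu$-nonamenable; these are the two asserted statements. The part I expect to need the most care is checking that the isomorphism in Observation~\ref{obs:cluster} transports the Radon--Nikodym cocycle of $\cR^{\mathrm{cl}}$ to the relative weight function, so that ``$\w$-nonvanishing end'' is the same notion on the component side and the cluster side and $A$ is honestly the measurable transfer of $B_o$; beyond that, the argument is a routine combination of the quoted mcp results with the standard permanence of hyperfiniteness under restriction.
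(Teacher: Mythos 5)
The statement is imported into the paper as \cite[Corollary 5.7]{FmaxSF22} and no proof is given, so there is no in-paper argument to compare against. On its own merits your derivation is correct: (i) passing from ``a.s.\ some cluster has $\ge3$ $\w$-nonvanishing ends'' to $\mathbf{P}(B_o)>0$ via countability of $V$, transitivity, and $\Gamma$-equivariance of the nonvanishing-end notion is sound; (ii) the transfer of $B_o$ to an $\cR^{\mathrm{cl}}$-invariant Borel set $A\subseteq\wh\gO$ of positive $\wh\mu$-measure is legitimate because $B_o$ is $\Gamma_o$-invariant and the rooted-isomorphism and cocycle statements in Observation~\ref{obs:cluster} ensure the notion of $\w$-nonvanishing end matches across the correspondence; (iii) restricting to $A$ puts you in the hypothesis of Theorem~\ref{thm:FmaxSF}, whose output forest has a.e.\ component with a nonempty perfect (hence infinite) set of $\w$-nonvanishing ends, and Theorem~\ref{thm: Anush-Robin} then gives nonamenability of that forest; and (iv) since $\mu$-hyperfiniteness passes to restrictions along Borel subsets, both conclusions propagate from $A$ to $\wh\gO$. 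One small streamlining you could note: the nonamenability of $\cG^{\mathrm{cl}}$ already follows from the existence of the nonamenable subforest (a subgraph of a $\mu$-hyperfinite graph is $\mu$-hyperfinite), so you do not need to invoke the ``in particular'' clause of Theorem~\ref{thm:FmaxSF} separately.
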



The following theorem is largely an adaptation of the results due to Adams and Lyons \cite{AdamsLyons} (see also \cite[Section 3.6]{JKLcber}) and \cite{AnushRobin}, as in Theorem~\ref{thm: Anush-Robin}, to our setting. Our main addition to the statement is the third condition that is similar to the $p_c$ condition from \cite[Proposition 7.2]{BLPS99inv}.

\begin{thm}\label{thm:trees and ph}
    Let $\Gamma$ be a closed subgroup of $\Aut(G)$ that acts transitively on $G$ and $\w$ be the induced relative weight function as in \eqref{def:haarweights}. Suppose $\cF$ is a $\Gamma$-invariant random spanning subforest of $G$. Then almost surely the following are equivalent:
    \begin{enumerate}
        \item\label{thm:trees and ph:hf} $\cF$ is hyperfinite.
        \item\label{thm:trees and ph:endselection} On a possibly larger probability space there is an invariant way to select an end in every infinite tree in $\cF$. 
        \item\label{thm:trees and ph:ph} For every heavy tree $T\in \cF$ we have $p_h(T)=1$ a.s.
        \item\label{thm:trees and ph:ends}  Every heavy tree $T\in \cF$ has $\le2$ $\w$-nonvanishing ends.
        \item\label{thm:trees and ph:geod}  Every heavy tree $T\in \cF$ has $\le2$ ends with heavy geodesics.
    \end{enumerate}
\end{thm}
\begin{rem}
    Without enlarging the probability space \eqref{thm:trees and ph:endselection} can be changed to the existence of an invariant selection at most two ends or finitely many vertices in every tree in $\cF$.
\end{rem}
\begin{proof}[Proof of Theorem~\ref{thm:trees and ph}]
    \eqref{thm:trees and ph:hf}$\Rightarrow$\eqref{thm:trees and ph:endselection}: First we consider light infinite trees in $\cF$. Since every such cluster has finitely many vertices of maximal weight, choosing one
    of them at random allows for an invariant selection of a vertex. Then we can select an invariant
    random end from all the ends by choosing a random infinite path that starts at this vertex, by
    iteratively choosing its $n$’th vertex uniformly at random from all vertices that allow a continuation
    to an infinite path. Thus, it remains to consider heavy trees. 
    
    Consider a cluster graphing $\cG^{\mathrm{cl}}$ of $\cF$. Then by Lemma~\ref{lem:hf=mu_hf} $\cF$ is hyperfinite if and only if $\cG^{\mathrm{cl}}$ is $\nu$-hyperfinite. Now, by \cite[Lemma 2.21]{JKLcber}, in a.e.~$\cG^{\mathrm{cl}}$-connected component we may select $\le 2$-ends or finitely many points in a Borel way. By Observation~\ref{obs:cluster}, $\cG^{\mathrm{cl}}$ components are isomorphic to corresponding clusters of $\cF$. Hence the selection in $\cG^{\mathrm{cl}}$ induces a selection in the clusters of $\cF$. Clearly, by the TMTP a.s.~in every heavy tree in $\cF$ we selected $1$ or $2$ ends. In case of one we are done, while in case of two choosing between selected ends at random yields the desired selection.

    \eqref{thm:trees and ph:endselection}$\Rightarrow$\eqref{thm:trees and ph:ph} 
    Suppose every heavy tree $T\in \cF$ has exactly one distinguished end, call it $\xi(T)$, and consider Bernoulli$(p)$ percolation on $\cF$ for any $p<1$. Since for every point in $T$ the geodesic path towards $\xi(T)$ will contain closed edges infinitely often, every cluster in $T$ will have a unique closest point to $\xi(T)$. By the TMTP all clusters must be light, hence $p_h(T)=1$. 
    
    \eqref{thm:trees and ph:ph}$\Rightarrow$\eqref{thm:trees and ph:hf}.
    Assume that for every heavy tree $T\in\cF$ we have $p_h(T)=1$. Associate an independent Uniform $[0,1]$ random variable $U_e$ to every edge $e\in \cF$ and for $\eps\in [0,1]$ let $\cF_{1-\eps}$ consist of those edges that have $U_e\le 1-\eps$. For every $\eps>0$ all clusters in $\cF_{1-\eps}$ are light and $\cF_{1-\eps}\nearrow T$ as $\eps\searrow0$. By part \eqref{lem:finite->light:hf} of Lemma~\ref{lem:finite->light}, $\cF$ is hyperfinite.

    \eqref{thm:trees and ph:hf}$\Leftrightarrow$\eqref{thm:trees and ph:ends},\eqref{thm:trees and ph:geod}. This equivalence follows from \cite{AnushRobin} as in Theorem~\ref{thm: Anush-Robin}. Indeed, consider the cluster graphing $\cG^{\mathrm{cl}}$ associated with $\cF$ and its connectedness relation $\cR^{\mathrm{cl}}$ that are defined on the standard probability space $(Y,\nu)$ as in Subsection~\ref{sec:mgt}. By Observation~\ref{obs:cluster} the $\cG^{\mathrm{cl}}$-connected component of $\wt{x}\in Y$ is isomorphic to the cluster of the root in the corresponding $\cF$-configuration. Moreover, this isomorphism maps values of the Radon--Nikodym cocylce $\w_\nu$ of $\cR^{\mathrm{cl}}$ with respect to $\nu$ to the relative weight function $\w$ on $G$. In particular, $\cG^{\mathrm{cl}}$ is $\nu$-hyperfinite if and only if $\cF$ is hyperfinite; 
    $\nu$-a.e.\ component $\cG^{\mathrm{cl}}$-component has $\le2$ $\w_\nu$-nonvanishing ends (resp.~$\le2$ ends with heavy geodesics) if and only if the same holds a.s.~for every heavy tree $T\in\cF$ with respect to $\w$. The conclusion now follows from Theorem~\ref{thm: Anush-Robin}.
\end{proof}

\begin{rem}[Heavy ends in a hyperfinite tree]
    One cannot replace $\w$-nonvanishing ends by heavy ends in the statement of Theorem~\ref{thm:trees and ph} and, as a consequence, in Theorems~\ref{thm:BLPSnew} and~\ref{thm:Forestequiv_new}. Indeed, consider $T_{1,2}$ from Example~\ref{ex:nonuni-tree}. Such a tree is hyperfinite, has a unique nonvanishing end, but every end in this tree is heavy. We delve deeper in the relations between various notions of ends at the end of this section.
\end{rem}

Before we move on to the proof of Theorem~\ref{thm:BLPSnew}, we observe in the next result a more general characterization of $\w$-amenability through invariant random forests.

\begin{thm}\label{thm:Forestequiv_new}
    Let $\Gamma$ be a closed subgroup of $\Aut(G)$ that acts transitively on $G$ and $\w$ be the induced relative weight function as in \eqref{def:haarweights}. Then the following are all equivalent:
    \begin{enumerate}
        \item\label{thm:Forestequiv1} $G$ is $\w$-amenable.
        \item\label{thm:Treeequiv} There is a $\Gamma$-invariant random spanning tree of $G$ with $\le 2$ $\w$-nonvanishing ends a.s.
        \item\label{thm:Treeequiv2} There is a $\Gamma$-invariant random spanning tree of $G$ with $\le 2$ ends with heavy geodesics a.s.
        \item\label{thm:Forestequiv2} For any $\Gamma$-invariant random spanning forest $\cF$ of $G$, the $\cF$-configuration $\varphi$ contains only trees with $\le 2$ $\w$-nonvanishing ends a.s.
        \item\label{thm:Forestequiv3} For any $\Gamma$-invariant random spanning forest $\cF$ of $G$, the $\cF$-configuration $\varphi$ satisfies $p_h(\varphi)=1$ a.s.
    \end{enumerate}
\end{thm}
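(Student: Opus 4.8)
\textbf{Proof plan for Theorem~\ref{thm:Forestequiv_new}.}

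The plan is to combine the characterization of weighted-amenability via hyperfiniteness (Theorem~\ref{thm:levelamen-hf}) with the per-cluster dichotomy of Theorem~\ref{thm:trees and ph}, and to organize the argument as a cycle of implications. I will first dispose of the easy equivalences: by Theorem~\ref{thm:trees and ph}, applied to any $\Gamma$-invariant random spanning forest, for every heavy tree $T$ in the configuration the statements ``$T$ has $\le 2$ $\w$-nonvanishing ends'', ``$T$ has $\le 2$ ends with heavy geodesics'', and ``$p_h(T)=1$'' are all equivalent (light trees are automatically hyperfinite, have a unique maximal level by Lemma~\ref{lem:maxweight}, hence at most one $\w$-nonvanishing end, and satisfy $p_h=1$ trivially). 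This immediately gives \eqref{thm:Treeequiv}$\Leftrightarrow$\eqref{thm:Treeequiv2} and \eqref{thm:Forestequiv2}$\Leftrightarrow$\eqref{thm:Forestequiv3}, and reduces the whole theorem to showing \eqref{thm:Forestequiv1}$\Rightarrow$\eqref{thm:Forestequiv2}, the trivial implication \eqref{thm:Forestequiv2}$\Rightarrow$\eqref{thm:Treeequiv} (a spanning tree is a spanning forest, and one needs to exhibit at least one such forest — which exists by Corollary~\ref{cor:wamen-treeable} once we know \eqref{thm:Forestequiv1}, so the cycle should be routed carefully), and \eqref{thm:Treeequiv}$\Rightarrow$\eqref{thm:Forestequiv1}.

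For \eqref{thm:Forestequiv1}$\Rightarrow$\eqref{thm:Forestequiv2}: assume $G$ is $\w$-amenable, hence hyperfinite by Theorem~\ref{thm:levelamen-hf}, and let $\mathbf{F}$ be any $\Gamma$-invariant random spanning forest with configuration $\varphi$. Suppose for contradiction that with positive probability $\varphi$ contains a tree $T$ with $\ge 3$ $\w$-nonvanishing ends; by Theorem~\ref{thm:trees and ph} such a $T$ is non-hyperfinite, and in fact (re-examining the proof of \eqref{thm:trees and ph:ends}$\Rightarrow$\eqref{thm:trees and ph:hf}) one extracts an invariant random subforest of $T$, built from $\w$-trifurcation vertices on a single level, whose trees have infinitely many ends and are not hyperfinite. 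The contradiction should come from the fact that a hyperfinite graph cannot contain an invariant percolation supported on non-hyperfinite pieces: more precisely, I would use the monotone exhaustion $\go_n\nearrow G$ witnessing hyperfiniteness together with the expected-weighted-degree estimate of Theorem~\ref{thm:Ewh{D}inv} (exactly as in the \eqref{thm:levelamen-hf:hf}$\Rightarrow$\eqref{thm:levelamen-hf:amen} step), restricted to the intersection with the bad trees, to force $\iota_E^{\,\w}$ of those trees to vanish — but an invariant forest of trees each with $\ge 3$ $\w$-nonvanishing ends carries a nonzero weighted isoperimetric profile via the trifurcation structure, a contradiction. Alternatively, and perhaps more cleanly, the measured-group-theory route: the cluster graphing of $\mathbf{F}$ would be $\mu$-hyperfinite if $G$ is ($\Gamma$ amenable $\Rightarrow$ all invariant percolations give $\mu$-amenable cluster graphings via Observation~\ref{obs:cluster}\ref{obs:cluster-amen} and the fact that subrelations of amenable ones are amenable), yet Theorem~\ref{thm:cluster-graphing_nonamenable} says a cluster with $\ge 3$ $\w$-nonvanishing ends makes the cluster graphing $\mu$-nonamenable.

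For \eqref{thm:Treeequiv}$\Rightarrow$\eqref{thm:Forestequiv1}: this is exactly implication \eqref{thm:BLPSnew2}$\Rightarrow$\eqref{thm:BLPSnew1} of Theorem~\ref{thm:BLPSnew}, so I would either cite it directly or, to keep this section self-contained, argue that a $\Gamma$-invariant spanning tree with $\le 2$ $\w$-nonvanishing ends is hyperfinite by Theorem~\ref{thm:trees and ph} (applied with $T=G$ viewed inside the trivial forest $\mathbf{F}=G$ when $G$ is itself a tree, or more generally by noting such a tree has $p_h=1$), and a graph containing an invariant spanning hyperfinite subgraph is hyperfinite, hence $\w$-amenable by Theorem~\ref{thm:levelamen-hf}. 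The main obstacle I anticipate is the \eqref{thm:Forestequiv1}$\Rightarrow$\eqref{thm:Forestequiv2} direction, specifically making rigorous that the non-hyperfiniteness of a single bad tree propagates to a contradiction with the hyperfiniteness of the ambient graph $G$ through an \emph{arbitrary} invariant forest — one has to be careful that the forest need not be ``compatible'' with the hyperfinite exhaustion, so the argument must go through an intrinsic invariant (weighted isoperimetry of the trifurcation subforest, or $\mu$-amenability of the cluster equivalence relation) rather than trying to combine the two exhaustions directly; the measured-group-theory formulation via Theorem~\ref{thm:cluster-graphing_nonamenable} and Observation~\ref{obs:cluster} seems the most economical way to close this gap.
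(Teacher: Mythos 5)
Your decomposition of the theorem matches the paper's in most respects: you correctly reduce \eqref{thm:Treeequiv}$\Leftrightarrow$\eqref{thm:Treeequiv2} and \eqref{thm:Forestequiv2}$\Leftrightarrow$\eqref{thm:Forestequiv3} to the per-tree dichotomy of Theorem~\ref{thm:trees and ph}, and your measured-group-theory route for \eqref{thm:Forestequiv1}$\Rightarrow$\eqref{thm:Forestequiv2} (cluster graphing of $\mathbf{F}$ is $\mu$-amenable when $\Gamma$ is amenable via Observation~\ref{obs:cluster}\ref{obs:cluster-amen}, while a tree with $\ge 3$ $\w$-nonvanishing ends forces $\mu$-nonamenability) is essentially what the paper does, with the only cosmetic difference that the paper invokes Theorem~\ref{thm: Anush-Robin} where you suggest Theorem~\ref{thm:cluster-graphing_nonamenable} — both suffice since the forest is acyclic. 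Your sketch of \eqref{thm:Treeequiv}$\Rightarrow$\eqref{thm:Forestequiv1} (tree has $p_h=1$ by Theorem~\ref{thm:trees and ph}, hence $G$ is hyperfinite, hence $\w$-amenable by Theorem~\ref{thm:levelamen-hf}) is also on the right track, and your vaguer isoperimetric alternative for \eqref{thm:Forestequiv1}$\Rightarrow$\eqref{thm:Forestequiv2} is best dropped in favor of the MGT argument.

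The genuine gap is the step from \eqref{thm:Forestequiv2} back into the cycle. You propose the chain \eqref{thm:Forestequiv1}$\Rightarrow$\eqref{thm:Forestequiv2}$\Rightarrow$\eqref{thm:Treeequiv}$\Rightarrow$\eqref{thm:Forestequiv1} and call \eqref{thm:Forestequiv2}$\Rightarrow$\eqref{thm:Treeequiv} ``trivial,'' but it is not, and you half-notice this yourself: \eqref{thm:Forestequiv2} is a universal statement about forests and says nothing about the \emph{existence} of a $\Gamma$-invariant random spanning \emph{tree}, which Corollary~\ref{cor:wamen-treeable} produces only under the hypothesis \eqref{thm:Forestequiv1} — exactly what you are trying to conclude. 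Flagging this as ``route the cycle carefully'' does not repair it; as stated your chain is circular at that node. What you are missing is a direct proof of \eqref{thm:Forestequiv2}$\Rightarrow$\eqref{thm:Forestequiv1}, and the paper supplies it by contrapositive: if $G$ is $\w$-nonamenable, then by Corollary~\ref{cor:inv-slice} some component of Bernoulli level-percolation is a nonamenable quasi-transitive unimodular graph, and by the forest-perturbation result of Benjamini--Lyons--Schramm (\cite[Theorem~3.10]{BLS-perturb}, adapted to the quasi-transitive case) that component carries a $\Gamma$-invariant random spanning forest with positive Cheeger constant, hence with trees having infinitely many ends; since a slice meets only finitely many levels, those ends are all $\w$-nonvanishing, violating \eqref{thm:Forestequiv2}. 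This slice-based construction of a ``bad'' forest is the nontrivial ingredient your plan omits. Separately, if you keep the measured-group-theory route for \eqref{thm:Forestequiv1}$\Rightarrow$\eqref{thm:Forestequiv2}, you should state explicitly which observation closes the loop: it is Observation~\ref{obs:cluster}\ref{obs:cluster-amen} combined with Theorem~\ref{thm:BLPSw-amen}, not merely the assertion that subrelations of amenable relations are amenable.
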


\begin{proof}

    \eqref{thm:Forestequiv1} $\Leftrightarrow$ \eqref{thm:Treeequiv},\eqref{thm:Treeequiv2}.
    From Corollary~\ref{cor:wamen-treeable} we know that a $\w$-amenable graph $G$ admits a hyperfinite $\Gamma$-invariant random spanning tree $T$. By Theorem~\ref{thm:trees and ph} hyperfiniteness of $T$ is equivalent to having $\le 2$ $\w$-nonvanishing ends (resp.~ ends with heavy geodesics).

    \eqref{thm:Forestequiv1}$\Rightarrow$\eqref{thm:Forestequiv2}. Suppose the opposite, that there exists a $\Gamma$-invariant random spanning forest $\cF$ of $G$ that contains a component with $\ge 3$ $\w$-nonvanishing ends with positive probability. By Theorem~\ref{thm:trees and ph}, such a forest is nonhyperfinite and hence  $G$ cannot be hyperfinite. By Theorem~\ref{thm:levelamen-hf} $G$ is $\w$-nonamenable, a contradiction.
    
    \eqref{thm:Forestequiv2}$\Rightarrow$\eqref{thm:Forestequiv1}. Suppose $G$ is $\w$-nonamenable. By Corollary~\ref{cor:inv-slice} there is an invariant level-percolation on $G$ whose cluster contains a nonamenable slice a.s. By \cite[Theorem 3.10]{BLS-perturb} adopted to the quasi-transitive setting we have that such a component admits an invariant spanning forest with positive Cheeger constant and, thus, infinitely many ends. Since a slice contains only finitely many levels each these ends is $\w$-nonvanishing.
    
    \eqref{thm:Forestequiv2}$\Leftrightarrow$\eqref{thm:Forestequiv3}. Follows from Theorem~\ref{thm:trees and ph}.
\end{proof}

\begin{proof}[Proof of Theorem~\ref{thm:BLPSnew}]
\eqref{thm:BLPSnew1}$\Leftrightarrow$\eqref{thm:BLPSnew2} is part of Theorem \ref{thm:Forestequiv_new}.

\eqref{thm:BLPSnew1}$\Rightarrow$\eqref{thm:BLPSnew3} The forward direction was shown in Theorem~\ref{thm:BLPSnew}, as a $\Gamma$-invariant random spanning tree $T$ with $\le 2$ $\w$-nonvanishing ends has $p_h(T)=1$ by Theorem~\ref{thm:trees and ph}.

\eqref{thm:BLPSnew1}$\Leftarrow$\eqref{thm:BLPSnew3} The other direction follows from hyperfiniteness and is based on the first part of the argument in the proof of \cite[Theorem 8.21]{LyonsBook}. Assign to each edge $e\in E(G)$ an independent Uniform$[0,1]$ random variable $U_e$. Given a random connected subgraph $\go$ let $\go_{1-1/n}$ be its subgraph that contains all of the edges with $U_e\le1-1/n$. Since $p_h(\go)=1$ all clusters in $\go_{1-1/n}$ are light. For every vertex $x\in V$ define $W(x)$ to be the closest vertex (in $G$-distance) in $\go$ to $x$, if there are several then choose one uniformly at random. Let $\xi_n$ consist of all edges $(x,y)$ such that $W(x)$ and $W(y)$ are in the same cluster in $\go_{1-1/n}$. By the TMTP, for all $n$ every cluster in $\xi_n$ has to be light and $\xi_n\nearrow G$. Hence $G$ is hyperfinite which is equivalent to $\w$-amenability by Theorem~\ref{thm:levelamen-hf}.

\eqref{thm:BLPSnew1}$\Leftrightarrow$\eqref{thm:BLPSnew4} This was proved by Benjamini, Lyons, Peres and Schramm, as in Theorem \ref{thm:BLPSinv5.3}.
\end{proof}

\begin{lem}\label{lem:geod-path}
    Let $\Gamma$ be a closed subgroup of $\Aut(G)$ that acts transitively on $G$ and $\w$ be the induced relative weight function as in \eqref{def:haarweights}. Suppose $\mathbf{P}$ is a $\Gamma$-invariant random spanning forest such that all trees have $\le2$ $\w$-nonvanishing ends. Then $\mathbf{P}$-a.s.\ in every cluster the geodesic path is light for every vanishing end.
\end{lem}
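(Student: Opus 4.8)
The plan is to recognize Lemma~\ref{lem:geod-path} as the percolation counterpart of the ``moreover'' clause of Theorem~\ref{thm: Anush-Robin} and to transfer it through a cluster graphing, exactly in the spirit of the proof of Theorem~\ref{thm:trees and ph}. Concretely, I would view the invariant random spanning forest $\mathbf{P}=\mathbf{F}$ as a $\Gamma$-invariant bond percolation on $G$ and form its cluster graphing $\cG^{\mathrm{cl}}$ over $(\wh\gO,\wh\mu)$ as in Subsection~\ref{sec:mgt}. Since $\mathbf{F}$ is a forest, every component of $\cG^{\mathrm{cl}}$ is a tree, so $\cG^{\mathrm{cl}}$ is acyclic; by Observation~\ref{obs:cluster} it is mcp, its Radon--Nikodym cocycle is the relative weight function $\w$, and a.e.\ component is isomorphic---carrying $\w$ to $\w^o$---to the $\mathbf{F}$-cluster of the root.

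The key steps are then the following. First I would record the translation of the hypothesis: the assumption that $\mathbf{P}$-a.s.\ every tree has $\le 2$ $\w$-nonvanishing ends is, under the above isomorphism, exactly condition~(2) of Theorem~\ref{thm: Anush-Robin} for $\cG^{\mathrm{cl}}$, namely that $\wh\mu$-a.e.\ component has $\le 2$ $\w$-nonvanishing ends. (The passage from ``$\wh\mu$-a.e.\ component'' to ``$\mathbf{P}$-a.s.\ every cluster'' and back is the usual invariance argument: a property failing on a positive-$\wh\mu$-measure set of roots corresponds to a positive-probability event that the cluster of a fixed vertex is bad, and conversely, since every cluster contains a vertex.) Next I would invoke the ``moreover'' clause of Theorem~\ref{thm: Anush-Robin}: a.e.\ component then has the property that all of its ends other than those at most two $\w$-nonvanishing ones---that is, all of its $\w$-vanishing ends---admit only light geodesic paths. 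Unwinding the isomorphism, $\mathbf{P}$-a.s.\ in every $\mathbf{F}$-cluster the unique geodesic ray to each vanishing end has finite total $\w^o$-weight, which is the assertion.

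I would also note the more hands-on route, were a self-contained argument desired: fixing a heavy tree $T$ (light trees being trivial) with a vanishing end $\xi$ and geodesic ray $R=(x_0,x_1,\dots)$, and writing $T_i^+$ for the component of $T\setminus\{x_{i-1}x_i\}$ containing $x_i$, one observes that if some $T_{i_0}^+$ is light then the tail of $R$, hence $R$ itself, is light; so one is reduced to the case that every $T_i^+$ is heavy, and must then produce a contradiction---ideally a third $\w$-nonvanishing end, or a violation of the TMTP---from the heaviness of these shrinking subtrees together with the bound on the number of $\w$-nonvanishing ends. The main obstacle is precisely this last step, and it is the reason I would prefer the graphing proof: lightness of a geodesic ray is a tail property of $T$ that is not detectable by local percolation comparisons, so the structural input ``no three $\w$-nonvanishing directions'' must enter essentially, and packaging it cleanly is exactly what the Tserunyan--Tucker-Drob argument behind Theorem~\ref{thm: Anush-Robin} accomplishes. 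In the graphing proof the only real care needed is the bookkeeping matching ends of the graphing components with ends of the $\mathbf{F}$-trees, together with the a.e.-versus-a.s.\ transfer, both of which are routine given Observation~\ref{obs:cluster}.
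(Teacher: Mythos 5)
Your proof is correct and is exactly the paper's argument: the paper's own (one-line) proof invokes precisely the cluster graphing of $\mathbf{P}$, Observation~\ref{obs:cluster}, and the ``moreover'' clause of Theorem~\ref{thm: Anush-Robin}, which you spell out carefully. The extra hands-on sketch you append is not in the paper and, as you yourself flag, its final step is incomplete, but your main route matches the source.
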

\begin{proof}
    It  follows directly from considering the cluster graphing induced by the percolation process $\mathbf{P}$ and applying Theorem~\ref{thm: Anush-Robin} and Observation~\ref{obs:cluster}.
\end{proof}

Theorem~\ref{thm:trees and ph} and Lemma~\ref{lem:geod-path} naturally motivate the question about the relations between various notions of ends introduced in Subsection~\ref{sec:ends} at least in the context of invariant random subforests.

\begin{rem}[Nonvanishing ends, heavy ends, heavy geodesics]\label{rem:nonvan}
The results of the present section naturally raise the question on how nonvanishing ends, heavy ends and heavy geodesics are related in invariant random forests. 
As usual, let $\Gamma$ be a closed subgroup of $\Aut(G)$ that acts transitively on $G$ and $\w$ be the induced relative weight function as in \eqref{def:haarweights}, and consider some $\Gamma$-invariant random subforest of $G$.

  \textit{Case 1: $\ge 3$ (equivalently, infinitely many) nonvanishing ends.} Suppose a heavy component-tree has $\ge3$ nonvanishing ends. Then it is not hyperfinite, and (perhaps surprisingly) all vanishing ends are light, and thus have only light geodesic paths. This was shown in \cite{AnushRobin} for Borel acyclic graphs, and applying this result one can conclude the same for our setting. Geodesics towards nonvanishing ends in such trees might behave in all possible ways. For instance, consider $T_{2,3}$: it has only nonvanishing ends, and all geodesic paths that consist of only incoming (resp.~outgoing) edges in the orientation are light (resp.~heavy).

\textit{Case 2: two nonvanishing ends.} Suppose a heavy component-tree contains exactly two nonvanishing ends. Then by a simple application of TMTP all vanishing ends are light, and thus have only light geodesics. Indeed, let every vertex in such a tree send a unit mass to the closest point on the unique bi-infinite geodesic path that connects two nonvanishing ends. Similarly, the $\limsup$'s of weights along geodesic paths to nonvanishing ends must be equal to each other and greater than zero. Assuming the values of $\limsup$'s are different, orient the edges on the bi-infinite geodesic from the end with the lower value towards the one with the higher value. There must be the first vertex (in this orientation) whose weight is larger than the average of the $\limsup$'s. Since this point was chosen invariantly, this violates TMTP. Finally, assume both $\limsup$'s are zero then there are finitely many vertices on the bi-infinite path of maximal weight, this again contradicts TMTP.

  \textit{Case 3: unique nonvanishing end.} Suppose a heavy component-tree contains only one nonvanishing end. Firstly, a heavy end might be vanishing and contain only light geodesics. For example in $T_{1,2}$ every vanishing end is heavy, but the unique geodesic towards it has exponential decay in weight.
 Moreover, by Lemma~\ref{lem:geod-path} there can be at most one end with a heavy geodesic and it must be the nonvanishing end. However, there might be no ends with a heavy geodesic at all as we show in the following example.
\end{rem}
 
 \begin{ex}[Heavy forest without heavy geodesics]
     Recall the construction of $\DL(2,3)$ from Example~\ref{ex:DL}. We will construct an invariant random forest on $\DL(2,3)$ by running an invariant percolation on one of the trees it is constructed from. Start with $T_1:=T_{1,2}$ and $T_2:=T_{1,3}$ oriented in opposite directions. Now for each vertex in $T_{1,2}$ delete an edge to a uniformly chosen offspring (\ie, a neighbor of lower weight). The resulting configuration $\go_1$ consists only of infinite rays. We construct a percolation configuration $\omega$ on $\DL(2,3)$ by
    \[
    \left((x,y),(x',y')\right)\in \go \Leftrightarrow (x,x')\in \go_1 \textnormal{ and } (y,y')\in E(T_2).
    \]
    Clearly, such construction is invariant. Notice that every cluster in $\go$ is a copy of a canopy tree, in particular, it is heavy and one-ended. The weights along the geodesic path towards this end decay exponentially. 
\end{ex}

%
\section{Percolation phase transitions}\label{sec:phasetrans}
%

\subsection{The question of $p_h<p_u$}
We begin by recalling a result from \cite{Timar06nonu} rephrased to our terminology.

\begin{thm}[{\cite[Corollary 5.8]{Timar06nonu}}]\label{cor: Adam5.8}
  Let $\Gamma$ be a closed subgroup of $\Aut(G)$ that acts transitively on $G$ and $\w$ be the induced relative weight function as in \eqref{def:haarweights}. Then if $G$ is level-amenable then there cannot be infinitely many heavy clusters in Bernoulli$(p)$ percolation on $G$ for any $p\in[0,1]$.
\end{thm}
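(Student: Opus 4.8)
The plan is to deduce this from the invariant-forest results of Section~\ref{sec:invarforest}, rather than re-running the slice technique of \cite{Timar06nonu}. First I would reduce to a contradiction statement: since level-amenability of $G$ is equivalent to $\w$-amenability (Theorem~\ref{thm:levelamen-hf}), which in turn is equivalent to amenability of $\Gamma$ (Theorem~\ref{thm:BLPSw-amen}), it suffices to show that if Bernoulli$(p)$ percolation on $G$ has, with positive probability, infinitely many heavy clusters, then $\Gamma$ is \emph{non}amenable. Bernoulli$(p)$ percolation is ergodic under the transitive $\Gamma$-action, and the number of heavy clusters is $\pr_p$-a.s.\ an element of $\{0,1,\infty\}$ (the heavy analogue of the Newman--Schulman trichotomy, implicit in the four-phase picture; cf.\ \cite{Haggstrom99} and Subsection~\ref{sec:percintro}), so the ``positive probability'' hypothesis upgrades to ``$\pr_p$-a.s.''.

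Assume now that $\pr_p$-a.s.\ there are infinitely many heavy clusters. By Theorem~\ref{thm:CorBernPerc}, for $\pr_p$-a.e.\ configuration $\omega$ the forest $\fmax_{\w}(\omega)$ almost surely contains a tree with at least three (in fact infinitely many) $\w$-nonvanishing ends. Sampling $\omega\sim\pr_p$ jointly with the i.i.d.\ uniform labels used to define $\fmax_{\w}$ and pushing forward, one obtains a $\Gamma$-invariant bond percolation $\mathbf{P}$ on $G$ whose typical configuration is $\fmax_{\w}(\omega)$; here one uses that $\fmax_{\w}$ is $\Gamma$-equivariant, which holds because $\w$ is a $\Gamma$-invariant cocycle, so an element $\gamma\in\Gamma$ merely multiplies all vertex weights by a common positive constant and hence preserves the weight-ordering of edges that defines $\fmax_{\w}$. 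By construction, $\mathbf{P}$-a.s.\ some cluster has $\ge 3$ $\w$-nonvanishing ends.

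Finally I would invoke Theorem~\ref{thm:cluster-graphing_nonamenable}: the cluster graphing $\cG^{\mathrm{cl}}$ associated with $\mathbf{P}$ is $\mu$-nonamenable, so by Observation~\ref{obs:cluster}\ref{obs:cluster-amen} the group $\Gamma$ is nonamenable, the desired contradiction. One can avoid measured group theory here: by Theorem~\ref{thm:levelamen-hf} a $\w$-amenable $G$ is hyperfinite, hence so is every $\Gamma$-invariant random subgraph of $G$ (intersect it with an independent hyperfiniteness witness of $G$), and hence so are the component trees of $\fmax_{\w}(\omega)$; but a heavy tree with $\ge 3$ $\w$-nonvanishing ends is not hyperfinite by Theorem~\ref{thm:trees and ph}. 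I expect no serious obstacle, since the content is carried entirely by Theorems~\ref{thm:CorBernPerc}, \ref{thm:cluster-graphing_nonamenable}, \ref{thm:levelamen-hf} and \ref{thm:trees and ph}, with only bookkeeping left (the $0$-$1$-$\infty$ law for heavy clusters, $\Gamma$-equivariance of $\fmax_{\w}$, and matching the ``$\ge 3$ $\w$-nonvanishing ends'' hypothesis across the cited results). The one genuinely delicate point arises only if one insists on a self-contained, slice-based argument in the spirit of \cite{Timar06nonu}: there the difficulty is that a heavy cluster of $G$ may pass through unboundedly many levels, so it is not immediate that infinitely many heavy $G$-clusters force infinitely many infinite components inside a single (finite) slice --- one would first localize each heavy cluster to a slice using that it meets some fixed level infinitely often (cf.\ Lemma~\ref{lem:maxweight}) and then run a Burton--Keane argument on the amenable unimodular quasi-transitive slice.
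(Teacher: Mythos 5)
The paper does not reprove Theorem~\ref{cor: Adam5.8}; it is cited as \cite[Corollary 5.8]{Timar06nonu}, where it was established by the slice technique you sketch at the end. However, the route you take is essentially the paper's own ``Third proof'' of the closely related Corollary~\ref{thm:ph=pu}, lightly adapted to cover any $p$ with infinitely many heavy clusters (including the boundary values $p=p_h$ or $p_u$). Your chain --- reduce via Theorems~\ref{thm:levelamen-hf} and~\ref{thm:BLPSw-amen} to showing $\Gamma$ is nonamenable, then invoke Theorem~\ref{thm:CorBernPerc} to get $\ge 3$ $\w$-nonvanishing ends in $\fmax_{\w}(\omega)$, then Theorem~\ref{thm:cluster-graphing_nonamenable} and Observation~\ref{obs:cluster}\ref{obs:cluster-amen} --- is exactly what the paper runs there, and the bookkeeping points you flagged ($\Gamma$-equivariance of $\fmax_{\w}$, which follows from $\w^{\gamma y}(\gamma x)=\w^y(x)$ so that weight-orderings of edges are preserved; and the $0$-$1$-$\infty$ law for heavy clusters, which holds by insertion tolerance since a union of heavy sets is heavy) are fine. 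There is no circularity since Theorems~\ref{thm:levelamen-hf}, \ref{thm:CorBernPerc}, \ref{thm:cluster-graphing_nonamenable}, and \ref{thm:trees and ph} are proved independently of Theorem~\ref{cor: Adam5.8}. Your second route, using that any $\Gamma$-invariant random subgraph of a hyperfinite $G$ is hyperfinite and that a tree with $\ge 3$ $\w$-nonvanishing ends is not (Theorem~\ref{thm:trees and ph}), is also correct; it avoids the measured-group-theoretic translation and is arguably more self-contained. The overall proposal is sound.
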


A conjecture due to Hutchcroft \cite[Conjecture 8.5]{Hutchcroft20} states that this is, in fact, a characterization of the existence of the infinite phase for the heavy clusters. In light of Theorem~\ref{thm:levelamen-hf}, this conjecture can be restated as a generalization of the ``$p_c<p_u$'' conjecture of Benjamini and Schramm mentioned above, \cite[Conjecture 6]{BSbeyond}.

\begin{conj}\label{conj:ph_vs_pu}
    Let $\Gamma$ be a closed subgroup of $\Aut(G)$ that acts transitively on $G$ and $\w$ be the induced relative weight function as in \eqref{def:haarweights}. Then $G$ is $\w$-amenable if and only if $p_h(G,\Gamma)=p_u(G)$.
\end{conj}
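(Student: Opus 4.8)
The plan is to treat the two implications separately, since one is essentially a corollary of results already in this paper while the other is not expected to be settled in full.

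\emph{Forward direction ($\w$-amenable $\Rightarrow p_h=p_u$).} If $\Gamma$ is unimodular then $\w\equiv 1$, so $\w$-amenability is ordinary amenability and $p_h=p_c$; the implication is then the theorem of \cite{pc=pu} that amenable transitive graphs satisfy $p_c=p_u$. If $\Gamma$ is nonunimodular, Theorem~\ref{thm:levelamen-hf} gives that $G$ is level-amenable, and Theorem~\ref{cor: Adam5.8} then forbids infinitely many heavy clusters in Bernoulli$(p)$ percolation for every $p$. But for every $p\in(p_h,p_u)$ there would be infinitely many heavy clusters: $p>p_h$ forces a heavy cluster to exist, $p<p_u$ forces infinitely many infinite clusters, and light-infinite and heavy clusters cannot coexist \cite{Haggstrom99}, so all of them are heavy. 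Hence $(p_h,p_u)=\varnothing$, and combined with $p_h\le p_u$ (trivial when $\Gamma$ is unimodular, Hutchcroft's inequality otherwise) this gives $p_h=p_u$.

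\emph{Reverse direction ($p_h=p_u\Rightarrow\w$-amenable).} The plan is to prove the contrapositive: $\w$-nonamenable $\Rightarrow p_h(G,\Gamma)<p_u(G)$. By the reformulation used above it suffices to exhibit one value $p$ for which Bernoulli$(p)$ percolation has infinitely many heavy clusters, since for $\w$-nonamenable $G$ such a $p$ automatically satisfies $p>p_h$ (by Theorem~\ref{thm:contph}) and $p<p_u$ (infinitely many infinite clusters are present). The first step is to quantify $\w$-nonamenability: by Lemma~\ref{lem:edgeamenability} and Theorem~\ref{thm:wKesten} it is equivalent to $\Phi^{\w}_E(G,c)>0$, i.e. $\rho^{(\w)}(G)<1$. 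The second step would be a Schonmann-type second-moment argument for the $\sqrt{\w}$-biased random walk on Bernoulli$(p)$ clusters: a sufficiently large weighted edge-Cheeger constant makes the $\sqrt{\w}$-biased walk on a cluster transient with uniformly controlled Green's function, which, through the $\sqrt{\w}$-degree bookkeeping of Theorem~\ref{thm:EDinv}, forces clusters of infinite $\w$-mass to coexist at some $p$ strictly below $p_u$. Since a bare positive weighted Cheeger constant is too weak for this, one replaces $G$ by a quasi-isometric transitive graph $G'$ still carrying the $\Gamma$-action whose weighted Cheeger constant has been inflated by adding ``deep'' edges, exactly as in the Pak--Smirnova-Nagnibeda treatment of $p_c<p_u$ for a Cayley graph of a nonamenable group \cite{PSN00}; this relaxed statement is Theorem~\ref{thm:wPSN}. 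The delicate points there are that the thickening must respect the $\Gamma$-orbit structure so that $\Gamma$ still acts transitively on $G'$, and that every isoperimetric estimate must be carried through with the weight function instead of the counting measure.

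\emph{Main obstacle.} The gap between Theorem~\ref{thm:wPSN} and the conjecture is genuine: taking $\Gamma$ unimodular specializes the reverse direction to the Benjamini--Schramm conjecture that nonamenable transitive graphs satisfy $p_c<p_u$ \cite{BSbeyond}, which is open. Concretely, what is missing is any lower bound on $p_u(G)$ in terms of $\Phi^{\w}_V(G)$ and the degree that stays useful when the weighted Cheeger constant is small but positive; without the graph modification there is no handle on $p_u$ at all. An alternative route, also currently stuck, is via invariant forests: Theorem~\ref{thm:Forestequiv_new} produces from $\w$-nonamenability a $\Gamma$-invariant spanning forest some of whose component trees $T$ have $p_h(T)<1$ and infinitely many $\w$-nonvanishing ends, and one would like to ``fatten'' such a forest inside Bernoulli$(p)$ percolation for $p<p_u$; but an invariant forest need not be stochastically dominated by a Bernoulli cluster, and inserting it invariantly without collapsing everything into a single cluster is exactly the difficulty that $p_c<p_u$ has resisted.
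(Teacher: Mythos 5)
You correctly identify the status of this statement: it is a genuine conjecture (restated from \cite[Conjecture 8.5]{Hutchcroft20}), the forward implication is Corollary~\ref{thm:ph=pu}, and the reverse is open. Your forward direction is exactly the paper's first proof of that corollary (Theorem~\ref{thm:levelamen-hf} gives level-amenability, Theorem~\ref{cor: Adam5.8} then rules out infinitely many heavy clusters for every $p$), and your derivation that $p_h<p_u$ is equivalent to the existence of a $p$ with infinitely many heavy clusters, using Theorem~\ref{thm:contph} to get a strict inequality, is sound.

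Where your sketch diverges from the paper is in the mechanism behind the relaxed statement, Theorem~\ref{thm:wPSN}. You describe a Schonmann-type second-moment argument for the $\sqrt{\w}$-biased random walk on clusters, with the $\sqrt{\w}$-degree bookkeeping of Theorem~\ref{thm:EDinv} detecting heavy clusters. The paper's actual proof avoids the $\sqrt{\w}$-biased walk entirely and does not use $\rho^{(\w)}$, $\Phi^\w_E$, or $D^\w$. Instead, Lemma~\ref{lem:nonamenlvl} is used first to replace $G$ by a quasi-isometric transitive graph on the same vertex set, carrying the same $\Gamma$-action and weights, in which every \emph{single level} already induces a nonamenable (unimodular, transitive) subgraph. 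The multigraph $G^{(k)}$ is then formed by joining the endpoints of paths in $G$ of length $\le k$ that start and end in the same level, so that the PSN inflation \cite[Lemma 2]{PSN00} applies within each level $L^{(k)}$. Crucially, the heaviness bookkeeping is then free: a level carries constant weight, so any infinite cluster contained in $L^{(k)}$ is automatically heavy, which gives $p_h(G^{(k)},\Gamma)\le p_c(L^{(k)})$. The rest of the chain
\[
p_h(G^{(k)},\Gamma)\le p_c(L^{(k)})\le \frac{1}{\Phi_E(L^{(k)})+1}<\frac{1}{\rho(G^{(k)})\,d(G^{(k)})}\le p_u(G^{(k)})
\]
uses only the ordinary (unweighted) Cheeger constant, degree, and spectral radius of $G^{(k)}$. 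So the key idea you are missing is that, after Lemma~\ref{lem:nonamenlvl}, the heavy-cluster problem is reduced to an ordinary infinite-cluster problem on a single unimodular nonamenable level, at which point the unweighted PSN argument applies verbatim; no weighted spectral theory or second-moment estimate on percolation clusters is needed.
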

Notice that indeed, when $G$ is unimodular, $\w$-amenability reduces to the regular notion of amenability for graphs and $p_c(G)=p_h(G,\Gamma)$ recovering \cite[Conjecture 6]{BSbeyond}.

We obtain the following analog of Gandolfi--Keane--Newman result \cite{pc=pu} about $p_c=p_u$ for unimodular amenable graphs. We will present three ways of deriving this corollary.
\begin{cor}\label{thm:ph=pu}
    Let $\Gamma$ be a closed subgroup of $\Aut(G)$ that acts transitively on $G$ and $\w$ be the induced relative weight function as in \eqref{def:haarweights}. If $G$ is $\w$-amenable then $p_h(G,\Gamma)=p_u(G)$.
\end{cor}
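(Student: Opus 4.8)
The plan is to derive $p_h(G,\Gamma)=p_u(G)$ from $\w$-amenability in three ways, each exploiting a different characterization of $\w$-amenability established earlier in the paper.

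\textbf{First approach (via hyperfiniteness).} Suppose $p\in(p_h,p_u)$; by definition of these thresholds, Bernoulli$(p)$ percolation on $G$ produces infinitely many heavy clusters a.s. By Corollary~\ref{lem:noends}, each such heavy cluster contains a $\w$-nonvanishing end, so $\pr_p$-a.s.\ there are infinitely many heavy clusters, and we may apply Theorem~\ref{thm:CorBernPerc}: for $\pr_p$-a.e.\ configuration $\omega$, the forest $\fmax_{\w}(\omega)$ contains (with positive probability, hence by ergodicity a.s.) a tree with $\ge 3$ $\w$-nonvanishing ends. But $\fmax_{\w}$, viewed as a forest on $G$, is a $\Gamma$-invariant random spanning forest of $G$, so this contradicts Theorem~\ref{thm:Forestequiv_new}\eqref{thm:Forestequiv2}, since $G$ is $\w$-amenable. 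Hence $(p_h,p_u)=\eset$, i.e.\ $p_h=p_u$. (One must also rule out the degenerate ordering $p_h>p_u$; but $p_h\le p_u$ always holds, as recalled in the introduction via \cite{Haggstrom99,Hutchcroft20}.)

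\textbf{Second approach (via invariant connected subgraphs / measured group theory).} By Theorem~\ref{thm:BLPSnew}, $\w$-amenability of $G$ is equivalent to amenability of $\Gamma$. Now suppose for contradiction that $p_h<p_u$, pick $p\in(p_h,p_u)$, and consider the union $\omega$ of all infinite (necessarily heavy) clusters under $\pr_p$ — or, more precisely, run the cluster-graphing construction of Subsection~\ref{sec:mgt} with $\mu=\pr_p$. Since $\pr_p$-a.s.\ there is a heavy cluster with $\ge 3$ $\w$-nonvanishing ends (again by Corollary~\ref{lem:noends} applied to each of the infinitely many heavy clusters, whose ends are distinct), Theorem~\ref{thm:cluster-graphing_nonamenable} shows the cluster graphing $\cG^{\mathrm{cl}}$ is $\mu$-nonamenable, hence by Observation~\ref{obs:cluster}\ref{obs:cluster-amen} the group $\Gamma$ is nonamenable, contradicting $\w$-amenability via Theorem~\ref{thm:BLPSw-amen}.

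\textbf{Third approach (direct, via isoperimetry).} This route avoids forests entirely. Suppose $p<p_u$ but $p>p_h$. At such $p$ there are infinitely many heavy clusters. The key will be to feed a cleverly chosen invariant percolation into Theorem~\ref{thm:Ewh{D}inv} or Corollary~\ref{thm:ph_threshold}: under $\w$-amenability we have $\iota_E^{\,\w}(G)=0$ (by Lemma~\ref{lem:edgeamenability} or directly from $\Phi_V^\w(G)=0$), so Theorem~\ref{thm:Ewh{D}inv} gives no contradiction by itself, and one instead argues the contrapositive by comparing with slices: by Theorem~\ref{thm:levelamen-hf} every slice of $G$ is amenable, hence (by Corollary~\ref{cor: Adam5.8}, restated from \cite{Timar06nonu}) there cannot be infinitely many heavy clusters in Bernoulli$(p)$ percolation on $G$ for \emph{any} $p$. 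This immediately gives $p_h=p_u$, since between $p_h$ and $p_u$ there would be infinitely many heavy clusters.

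\textbf{Main obstacle.} The delicate point common to the first two approaches is the passage ``infinitely many heavy clusters $\Rightarrow$ a single cluster-tree with $\ge 3$ $\w$-nonvanishing ends / $\mu$-nonamenable cluster graphing'': one must be careful that the $\w$-nonvanishing ends produced in distinct heavy clusters are genuinely distinct as ends of $G$ (immediate, as they lie in different clusters), and that the ergodicity/positive-probability bookkeeping in Theorems~\ref{thm:CorBernPerc} and~\ref{thm:cluster-graphing_nonamenable} is correctly invoked. The third approach is cleanest since Corollary~\ref{cor: Adam5.8} packages exactly the needed statement, so I would present that one first and the other two as remarks; the only thing to check there is the standard fact $p_h\le p_u$ and that the heavy-cluster-uniqueness regime above $p_u$ persists, which is immediate from the definitions in Figure~\ref{Fig:phases}.
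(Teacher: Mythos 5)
Your proposal is correct, and it essentially reproduces the paper's own proofs. Your ``third approach'' (pass from $\w$-amenability to level-amenability via Theorem~\ref{thm:levelamen-hf} and then invoke Theorem~\ref{cor: Adam5.8}) is exactly the paper's first proof and, as you say, the cleanest; your ``second approach'' (infinitely many heavy clusters $\Rightarrow$ a cluster with $\ge 3$ $\w$-nonvanishing ends $\Rightarrow$ $\mu$-nonamenable cluster graphing $\Rightarrow$ $\Gamma$ nonamenable) is the paper's third proof; and your ``first approach'' is a light repackaging of the same argument through Theorem~\ref{thm:Forestequiv_new}\eqref{thm:Forestequiv2}, which internally uses the identical cluster-graphing chain. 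Two small remarks: (i) the detour through Corollary~\ref{lem:noends} and the ``with positive probability, hence by ergodicity a.s.'' clause in your first approach are unnecessary, since Theorem~\ref{thm:CorBernPerc} already delivers the a.s.\ conclusion directly; and (ii) the paper also records a second, reference-only proof via the relative Burton--Keane theorem of Hutchcroft and Pan, which you did not mention.
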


\begin{proof}[First proof of Corollary~\ref{thm:ph=pu}] By Theorem ~\ref{thm:levelamen-hf} we know that $\w$-amenability implies level-amenability, so Theorem~\ref{cor: Adam5.8} implies the claim.
\end{proof}

\begin{proof}[Second proof of Corollary~\ref{thm:ph=pu}]
It follows from the relative Burton--Keane theorem \cite[Theorem 1.7]{HutchcroftPan24rel} of Hutchcroft and Pan.
\end{proof}

\begin{proof}[Third proof of Corollary~\ref{thm:ph=pu}]
    Suppose that $p_h<p_u$. Then for any $p\in(p_h,p_u)$ there are infinitely many heavy clusters. By Theorem~\ref{thm:CorBernPerc} for $\pr_p$-a.e.~configuration $\go$, the random forest $\fmax_{\w}(\go)$ almost surely contains infinitely many trees with infinitely many $\w$-nonvanishing ends. Thus by Theorem~\ref{thm:cluster-graphing_nonamenable} the cluster graphing of such a forest is $\mu$-nonamenable. Finally, by Observation~~\ref{obs:cluster}\ref{obs:cluster-amen} the group $\Gamma$ also must be nonamenable. Theorem~\ref{thm:BLPSw-amen} concludes the proof.
\end{proof}

Let us also mention that in \cite[Corollary 4.8]{Pengfei18} $p_h(G,\Gamma)\neq p_u(G)$ is characterized by the existence of an increasing exhausting sequence of slices $\{G_n\}_{n\in\bN}$ such that $\lim_{n\to\infty}p_c(G_n)< p_u(G).$

\begin{lem}\label{lem:nonamenlvl}
    Let $\Gamma$ be a closed subgroup of $\Aut(G)$ that acts transitively on $G$ and $\w$ be the induced relative weight function as in \eqref{def:haarweights}. If $G$ is $\w$-nonamenable then there exists a transitive graph $G'$ on the same vertex set such that
    \begin{enumerate}
        \item\label{lem:nonamenlvl1} $G'$ is quasi-isometric to $G$.
        \item\label{lem:nonamenlvl2} $\Gamma$ is a closed subgroup of $\Aut(G')$ and induces the same relative weight function $\w$.
        \item\label{lem:nonamenlvl3} Every level of $G'$ induces a nonamenable graph.
    \end{enumerate}
\end{lem}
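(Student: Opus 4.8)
The plan is to enlarge $G$ by a $\Gamma$-invariant family of short-range edges inside each level that turns every level into a nonamenable graph, while keeping all added edges of bounded $G$-length so that the identity map stays a quasi-isometry. Two preliminary facts drive everything. First, by \eqref{eq:invcoc} one has $\w^o(\gamma x)=\w^o(\gamma o)\,\w^o(x)$, so each $\gamma\in\Gamma$ rescales all weights by the constant $\w^o(\gamma o)$; hence $H:=\{\gamma\in\Gamma:\w^o(\gamma o)=1\}$ is a closed normal subgroup which stabilizes every level of $G$ setwise, with $\Gamma/H$ isomorphic to a subgroup of $(\bR^+,\cdot)$ (so amenable), and $H$ acts transitively on every level (if $\gamma$ sends a vertex to another of equal weight, then $\gamma\in H$). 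Second, since $G$ is $\w$-nonamenable it is not level-amenable by Theorem~\ref{thm:levelamen-hf}, so some slice of $G$ has a connected component $S$ which is a nonamenable graph; $S$ is connected, locally finite and quasi-transitive (a component of a quasi-transitive slice), so any level $L$ of $G$ meeting $V(S)$ embeds co-boundedly, i.e.\ $\sup_{v\in V(S)}\dist_S\!\big(v,\,V(S)\cap L\big)\le C<\infty$. (The nonamenability of $H$ also follows from the first fact and Theorem~\ref{thm:BLPSw-amen}, but we shall not need it directly.)

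The core step. Fix $L:=\level(o)$; since all levels are $\Gamma$-equivalent we may translate $S$ by an element of $\Gamma$ so that $L_S:=V(S)\cap L\ne\eset$, and set $R:=2C+1$. Let $K_S$ be the graph on $L_S$ joining $x\ne y$ whenever $\dist_S(x,y)\le R$. Then $K_S$ is locally finite, connected, invariant under $\{\gamma\in H:\gamma S=S\}$, quasi-isometric to $S$ because $L_S$ is $C$-dense in $S$, hence nonamenable; and every edge of $K_S$ has $\dist_G$-length $\le R$, since $\dist_G\le\dist_S$. As $H$ is transitive on $L$, the slice-components meeting $L$ form a single $H$-orbit of copies of $S$, and $L$ is the disjoint union of their intersections with $L$; placing the matching $H$-translate of $K_S$ on each yields an $H$-invariant locally finite graph $K^{L}$ on $L$ whose components are copies of $K_S$, so $\Phi_V(K^{L})=\Phi_V(K_S)>0$, with all edges of $\dist_G$-length $\le R$. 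Since $H$ is exactly the setwise $\Gamma$-stabilizer of $L$, transporting is consistent: for each level $L'$ choose $\gamma_{L'}\in\Gamma$ with $\gamma_{L'}L=L'$ and put $K^{L'}:=\gamma_{L'}(K^{L})$, which does not depend on the choice because two choices differ by an element of $H$, which fixes $K^{L}$. Finally let $G'$ be $G$ together with the edges of all the $K^{L'}$.

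The verification is then routine. The family $\{K^{L'}\}$ is $\Gamma$-equivariant by construction, so $\Gamma\le\Aut(G')$; $G'$ is transitive because $\Gamma$ already acts transitively on $V$; $\Gamma$ is closed in $\Aut(G')$ (same topology of pointwise convergence on $V$) and the relative weight function is determined by $\Gamma$ alone, which gives \ref{lem:nonamenlvl2}. Since $G\subseteq G'$ and every added edge has $\dist_G$-length $\le R$, one has $\tfrac1R\dist_G\le\dist_{G'}\le\dist_G$, so the identity is a quasi-isometry, giving \ref{lem:nonamenlvl1}. Lastly, a level of $G'$ is the same vertex set as the corresponding level of $G$, and the subgraph $G'$ induces on $L'$ contains $K^{L'}$, whose vertex-Cheeger constant is positive; adding further edges only increases it, so every level of $G'$ is nonamenable, giving \ref{lem:nonamenlvl3}. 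The hard part is the core step — producing a \emph{connected} nonamenable graph on (a piece of) a single level using only edges of bounded $G$-length. This is exactly where $\w$-nonamenability enters, via the existence of a nonamenable slice-component, combined with the elementary observation that a level sits co-boundedly inside such a component, so its bounded-distance graph is quasi-isometric to it and hence nonamenable; everything afterward is bookkeeping to make the transport $\Gamma$-invariant and to check that it covers all levels.
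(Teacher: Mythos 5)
Your proof is correct, but it takes a genuinely different route from the paper's. The paper's proof proceeds by \emph{iterative level-removal}: starting from a nonamenable slice $L_k$ (on $k$ levels), it peels off one level at a time, passing from $L_j$ to a graph $L_{j-1}$ on $j-1$ levels by bridging over the removed level with edges between vertices whose deleted neighbours are adjacent or coincide, and argues that each step preserves quasi-isometry type; after $k-1$ steps it lands on a nonamenable graph on a single level, which it then transports to every level. Your argument instead \emph{collapses in one shot}: you pick one nonamenable component $S$ of a slice, show (via quasi-transitivity of $S$) that a single level $L_S$ is co-bounded in $S$, and define $K_S$ directly by joining vertices of $L_S$ at bounded $S$-distance, which gives a graph on $L_S$ quasi-isometric to $S$ and hence nonamenable; then you extend $\Gamma$-equivariantly using the level-preserving normal subgroup $H$. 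What your approach buys is that the co-boundedness step is done explicitly (the $S$-distance to $L_S$ is constant on $\Delta_S$-orbits, hence takes finitely many values), and the well-definedness of the $\Gamma$-equivariant transport is verified carefully via $H$; the paper's iterative reduction instead front-loads the combinatorics into the one-level-at-a-time bridging rule and relies on the ``local function'' framing to get equivariance for free. One minor streamlining you could have made: the edges of $K_S$ can be described locally — join $x,y\in L$ whenever there is a path of length $\le R$ inside the slice connecting them — so $K^L$ is automatically $H$-invariant without explicitly translating $K_S$ around; your ``single $H$-orbit of copies of $S$'' observation is still needed to see that all components of $K^L$ are nonamenable. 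In all, the argument is sound and complete.
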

\begin{proof}
    Consider a $k>1$ and a graph $L_k$ with some unimodular group $\Delta$ of automorphisms acting on it in a quasi-transitive way with $k$ orbits. Let $V_{k-1}$ be an arbitrary union of $k-1$ of these orbits.  We will prove that there is a graph $L_{k-1}$ on $V_{k-1}$, such that every element of $\Delta|_{V_{k-1}}$ defines an isomorphism of $L_{k-1}$, and such that $L_{k-1}$ is quasi-isometric to $L_k$. 
    Moreover, the edges of $L_{k-1}$ will be defined as a local function of edges of $L_k$, (i.e., the $L_{k-1}$-edges on a vertex are given by a function from a suitable ball around the vertex, and the function is invariant under the rooted isomorphisms of this ball). 
    Once we have this construction, the proof is completed as follows. First use Theorem \ref{thm:levelamen-hf} to find a nonamenable subgraph $L_k$ of $G$ that is induced by the union of $k$ levels for a suitable $k<\infty$. Remove the level of lowest weight from $L_k$, call the remaining vertices $V_{k-1}$, and define $L_{k-1}$ for $L_k$ as above. Repeat this to obtain $L_{k-2}$ from $L_{k-1}$, and so on until we arrive to an $L_1$ where the restriction of $\Gamma$ to $L_1$ is transitive, and $L_1$ is quasi-isometric to $L_2$ and (inductively) to $L_k$. In particular, $L_1$ in nonamenable because $L_k$ is. Now, to every level of $G$ add edges as in $L_1$. The resulting graph $G'$ satisfies all the requirements of the theorem.

    To define $L_{k-1}$ from $L_k$, let the edge set of $L_{k-1}$ be the union of the edges of $L_k$ restricted to $V_{k-1}$, and all pairs of the form $\{u,v\}$ where $u$ and $v$ are in the same orbit and there are vertices $u',v'\in V(L_k)\setminus V_{k-1}$ such that $u$ is adjacent to $u'$, $v$ is adjacent to $v'$, and either $u'=v'$ or $u'$ and $v'$ are adjacent. To see that $L_{k-1}$ is quasi-isometric to $L_k$, note that every path in $L_k$ can be replaced by a path in $L_{k-1}$ whose length is at most twice its length.
\end{proof}

We now present the proof for the relaxation of Hutchcroft's conjecture.
\begin{proof}[Proof of Theorem~\ref{thm:wPSN}]
        By Lemma~\ref{lem:nonamenlvl} we may assume that each level of $G$ induces a nonamenable graph. Fix $k\in\bN$ and construct a multi-graph $G^{(k)}$ from $G$ by placing an edge between two endpoints of every path (in $G$) of length $\le k$ that starts and ends in the same level (with multiplicities). By \cite[Lemma 2]{PSN00} we have that 
    \begin{equation}\label{tendstoone}
        \frac{\Phi_E(L^{(k)})}{d(L^{(k)})}\to 1 \textnormal{ as } k\to\infty,
    \end{equation}
        where $L^{(k)}$ is the subgraph induced by a level in $G^{(k)}$, and $d(L^{(k)})$ denotes the degree in it.
        
        For any finite subgraph $F^{(k)}\subseteq G^{(k)}$ rewrite $\partial_E F^{(k)}$ as a disjoint union of the external edge boundary within levels and between levels, \ie\ $\partial_E F^{(k)}=E_{F^{(k)}}\sqcup \partial_E F^{(k)}\setminus E_{F^{(k)}}$, where
    \begin{align*}
        E_{F^{(k)}}:=\{(x,y)\in \partial_E F \mid  y\in\mathrm{Level}(x)\}.
    \end{align*}
    Notice that the size of $\partial_E F^{(k)}\setminus E_{F^{(k)}}$ is constant in $k$, while the size of $E_{F^{(k)}}$ increases as $k\to\infty$.
    Thus
    \begin{equation}\label{closeness}
    \frac{d(L^{(k)})}{d(G^{(k)})} \to 1 \quad\textnormal{as } k\to\infty.
    \end{equation}
    
   Consider any finite set $F\subseteq G^{(k)}$, and let $L_1,L_2,\ldots,L_m$ be the levels that it intersects. Then
    \begin{equation*}
        \frac{\abs{\partial_{E(G^{(k)})}F}}{\abs{F}}\ge\frac{\sum_{i=1}^m\abs{\partial_{E(L_i)}(F\cap L_i)}}{\sum_{i=1}^m\abs{F\cap L_i}}\ge \min_{i\in[m]}\frac{\abs{\partial_{E(L_i)}(F\cap L_i)}}{\abs{F\cap L_i}}\ge \Phi_E(L^{(k)}),
    \end{equation*}
    where the first inequality holds as we discarded edges on the boundaries that connect different levels, and the last inequality holds because all levels are isomorphic to each other. Taking infimum over all $F$ yields that 
    \begin{equation}\label{eq:cheegeq}
        \Phi_E(G^{(k)})\ge\Phi_E(L^{(k)}) \quad\textnormal{ for all }k\in\bN.
    \end{equation}
    
    By this, \eqref{tendstoone} and \eqref{closeness}, if $k$ is large enough then $\frac{\Phi_E(L^{(k)})}{d(L^{(k)})}> \frac{1}{\sqrt{2}}$ and even $\frac{\Phi_E(L^{(k)})}{d(G^{(k)})}> \frac{1}{\sqrt{2}}$. Thus, by \eqref{eq:cheegeq}, there exists $k$ such that $\frac{\Phi_E(G^{(k)})}{d(G^{(k)})}>\frac{1}{\sqrt{2}}$ and hence by \cite[Theorem 7.38]{LyonsBook} we have
    $\rho(G^{(k)})<\frac{1}{\sqrt{2}}$.
    Then we derive that
    \begin{align*}
    p_h\left(G^{(k)},\Gamma\right)&\le p_c(L^{(k)})\le \frac{1}{\Phi_E(L^{(k)})+1}<\frac{\sqrt{2}}{d(G^{(k)})}<\frac{1}{\rho(G^{(k)})d(G^{(k)})}
    \le p_u(G^{(k)}),
    \end{align*}
    where the second and the last inequalities are from Benjamini and Schramm \cite{BSbeyond}, see \cite[Theorem 6.46]{LyonsBook} and \cite[Theorem 7.32, Lemma 7.33]{LyonsBook} respectively. Choosing $G':=G^{(k)}$ completes the proof.
\end{proof}
Let us mention that by using \cite{ThomPSN} instead of \cite{PSN00} in the above proof one can attain a graph $G'$ in the theorem that has no parallel edges. 

In light of Theorem~\ref{thm:wPSN} one can ask the following equivalent of Conjecture~\ref{conj:ph_vs_pu}.

\begin{ques}
Let $G$ and $G'$ be graphs on the same vertex set and $\Gamma$ be a closed subgroup of their automorphism groups that acts transitively on both. Does $p_h(G)<p_u(G)$ imply $p_h(G')<p_u(G')$? More generally, is $p_h<p_u$ preserved by quasi-isometries that do not change the weights?
\end{ques}

\subsection{Continuity of the percolation phase transition for $\w$-nonamenable graphs}

In this section, we prove that there are no heavy clusters under critical Bernoulli$(p_h)$ bond percolation on a $\w$-nonamenable graph $G$. In the unimodular setting it was shown in \cite{BLPSpc}. The structure of our proof resembles the original argument (as in \cite[Theorem 8.21]{LyonsBook}), but is presented in terms of nonhyperfiniteness of the heavy clusters. Our contribution is in the development of proper weighted analogs of the statements referenced in the argument.

It is easy to show that in Theorem~\ref{thm:BLPSnew} one can replace \eqref{thm:BLPSnew3} by ``there is a $\Gamma$-invariant random connected hyperfinite subgraph of $G$''. This modification leads to the following result.
\begin{thm}\label{thm:heavy_nonhf}
    Let $\Gamma$ be a closed subgroup of $\Aut(G)$ that acts transitively on $G$ and $\w$ be the induced relative weight function as in \eqref{def:haarweights}. Suppose $G$ is $\w$-nonamenable.
    Suppose $p\in[0,1]$ is such that there is a heavy cluster $\pr_p$-a.s. Then $\pr_p$-a.s.,\ every heavy cluster in $\omega$ is not hyperfinite, \ie\ $\pr_p(C(o) \textnormal{ is not hyperfinite}\mid \w(C(o))=\infty)=1.$
\end{thm}
\begin{rem}
    Theorem~\ref{thm:heavy_nonhf} holds more generally for any insertion and deletion tolerant $\Gamma$-invariant percolation that admits a heavy cluster.
\end{rem}
\begin{proof}[Proof of Theorem~\ref{thm:heavy_nonhf}]
    In the case when there is a unique heavy cluster, one can extend a hyperfinite exhaustion of $\go$ to an exhaustion of $G$ using the same argument as in  Theorem~\ref{thm:BLPSnew}\eqref{thm:BLPSnew3}. 
    
    In the case when there are infinitely many heavy clusters, Theorem~\ref{thm:CorBernPerc} implies that a.s.,~for every heavy cluster $C\subseteq\go$, there is a tree $T\subseteq\fmax_\w(C)$ with $\ge3$ $\w$-nonvanishing ends. So with positive probability $o\in T$ and on this event $T$ is not hyperfinite by Theorem~\ref{thm:trees and ph}. Hence $C$ is also not hyperfinite.
\end{proof}
This immediately implies Theorem~\ref{thm:contph}.
\begin{proof}[Proof of Theorem~\ref{thm:contph}]
    The first part of the statement follows from the fact that percolation configuration at $p_h$ is by definition hyperfinite, hence by Theorem~\ref{thm:heavy_nonhf} it contains only light clusters.
    
    The inequality $p_h(G,\Gamma)<1$ follows from the first part or even more directly from Theorem~\ref{thm:Ewh{D}inv}.
\end{proof}

We now give an alternative proof of one of the main results in \cite{Timar06nonu}, which implies \cite[Corollary 5.8]{Timar06nonu} (as in Theorem~\ref{cor: Adam5.8}) used in the previous subsection. In fact, we present a slightly stronger (but conjecturally equivalent) statement as we do not require the existence of infinitely many heavy clusters. Instead, we assume that $G$ is $\w$-nonamenable. 

\begin{cor}[Slight strengthening of {\cite[Theorem 5.5]{Timar06nonu}}]
    Let $\Gamma$ be a closed subgroup of $\Aut(G)$ that acts transitively on $G$ and $\w$ be the induced relative weight function as in \eqref{def:haarweights}.
    Suppose $G$ is $\w$-nonamenable and $p\in[0,1]$ is such that there is a heavy cluster $\pr_p$-a.s. Then a.s.~for any infinite cluster $C$ there exists a slice $S$ such that $C\cap S$ induces an infinite connected component.
\end{cor}
\begin{proof}
    Suppose there is a heavy cluster $C\subset\go$ such that for any slice $S$ we have that $C\cap S$ induces only finite components. Consider level-percolation of $G$ defined by a hyperfinite exhaustion of the level graph $\cL(G)$. Since each connected component of such percolation is a slice, intersecting it with $\go$ yields a hyperfinite exhaustion of $C$, contradicting Theorem~\ref{thm:heavy_nonhf}.
\end{proof}

Tracing down all of the statements on which this proof relies both in the present work and in \cite{FmaxSF22} one can see that it is indeed independent of \cite{Timar06nonu}. However, some of the ideas in our argument are similar to those in \cite{Timar06nonu}. For example, both approaches rely on partitioning of the levels, either by taking a 1-partition or a level-percolation.
The proof in \cite{Timar06nonu} was based on a construction of an infinitely-ended forest on a finite set of levels, thus reducing the problem to unimodular quasi-transitive graphs (similarly to the present proof of Theorem \ref{thm:wPSN}). Here, through the use of the $\fmax$ with trees of infinitely many nonvanishing ends, we get a direct contradiction.

%
\section{Questions and further directions}\label{sec:next}
%

\subsection{Upper bounds on $p_h$}\label{subsec:ph}
The inequality of Benjamini and Schramm $p_c(G)\le 1/(\Phi_V(G)+1)$ \cite[Theorem 2]{BSbeyond} raises the following question.
\begin{ques}\label{ques:ph}
    Let $\Gamma$ be a closed subgroup of $\Aut(G)$ that acts transitively on $G$ and $\w$ be the induced relative weight function as in \eqref{def:haarweights}. Does the following inequality hold 
    \[
    p_h(G,\Gamma)\le \frac{1}{\Phi^{\w}_V(G)+1}?
    \]
\end{ques}
An identical argument to the that of the fact that $p_c^{\mathrm{bond}}(G)\le p_c^{\mathrm{site}}(G)$ \cite{Hammersley61} as presented in \cite[Proposition 7.10]{LyonsBook} yields the following.
\begin{lem}\label{lem:bond<site}
    Let $\Gamma$ be a closed subgroup of $\Aut(G)$ that acts transitively on $G$, then
    \[
    p_h^{\mathrm{bond}}(G,\Gamma)\le p_h^{\mathrm{site}}(G,\Gamma).
    \]
\end{lem}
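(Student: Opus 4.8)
The plan is to transfer, essentially verbatim, Hammersley's comparison coupling used in the proof of \cite[Proposition 7.10]{LyonsBook}, the only additional observation being that heaviness of a cluster is monotone under vertex-set inclusion. No deep new idea is needed; the mild care required is in reformulating $p_h$ in terms of the root cluster, which is where transitivity enters.

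First I would record the reformulation. For $\bullet\in\{\mathrm{bond},\mathrm{site}\}$, the event ``there is a heavy cluster'' is increasing in the configuration (adding sites/edges can only merge clusters, and the weight of a merged cluster is at least that of either piece) and it is $\Gamma$-invariant, so by ergodicity of $\pr_p^{\bullet}$ under the transitive action of $\Gamma$ it has probability $0$ or $1$ for each $p$; combined with monotonicity in $p$ this gives $p_h^{\bullet}(G,\Gamma)=\inf\{p\in[0,1]:\pr_p^{\bullet}(\exists\text{ heavy cluster})>0\}$. Moreover, whenever $\pr_p^{\bullet}(\exists\text{ heavy cluster})>0$ one has $\pr_p^{\bullet}\big(\w^o(C(o))=\infty\big)>0$: otherwise $\pr_p^{\bullet}(x\in\text{heavy cluster})=0$ for $x=o$, hence for every $x\in V$ by transitivity of $\Gamma$, and a union bound over the countable vertex set would force $\pr_p^{\bullet}(\exists\text{ heavy cluster})=0$. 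The converse implication is trivial, so $p_h^{\bullet}(G,\Gamma)=\inf\{p\in[0,1]:\pr_p^{\bullet}(\w^o(C(o))=\infty)>0\}$.

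Next I would set up the coupling exactly as in \cite[Proposition 7.10]{LyonsBook}. Fix $p\in(0,1)$ and a root $o$, and run the one-vertex-at-a-time exploration of the bond cluster of $o$: maintain a set of reached vertices (initially $\{o\}$) together with a queue of incident unexplored edges, examine edges one at a time using i.i.d.\ $\mathrm{Uniform}[0,1]$ labels $(U_e)_{e\in E}$, and add the far endpoint to the reached set whenever $U_e\le p$. For $v\neq o$ let $e(v)$ be the first examined edge incident to $v$ (if any), and declare $v$ ``site-open'' iff $U_{e(v)}\le p$, declaring $o$ site-open with an independent probability $p$. As in the cited reference, whether an edge is examined and of which vertex it is the edge $e(\cdot)$ is determined by the labels revealed strictly earlier; hence the site-openness indicators are i.i.d.\ $\mathrm{Bernoulli}(p)$, the bond labels give $\mathrm{Bernoulli}(p)$ bond percolation, and an induction along a site-open path from $o$ shows $C^{\mathrm{site}}(o)\subseteq C^{\mathrm{bond}}(o)$ almost surely. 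I would quote these two facts rather than rechecking the exploration bookkeeping, since nothing here involves the weights.

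Finally, since every $\w^o(v)$ is strictly positive, the inclusion $C^{\mathrm{site}}(o)\subseteq C^{\mathrm{bond}}(o)$ yields $\w^o\big(C^{\mathrm{site}}(o)\big)\le\w^o\big(C^{\mathrm{bond}}(o)\big)$ on the coupling space, so $\pr_p^{\mathrm{bond}}\big(\w^o(C(o))=\infty\big)\ge\pr_p^{\mathrm{site}}\big(\w^o(C(o))=\infty\big)$ for every $p\in(0,1)$. Taking the infimum over those $p$ for which the right-hand side is positive, and using the reformulation of $p_h$ from the first step, gives $p_h^{\mathrm{bond}}(G,\Gamma)\le p_h^{\mathrm{site}}(G,\Gamma)$. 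The only step that is not purely classical is the passage ``$\exists$ heavy cluster a.s.\ $\Rightarrow$ the root lies in a heavy cluster with positive probability'', and even that is a one-line union-bound argument once transitivity is invoked; I do not anticipate a genuine obstacle.
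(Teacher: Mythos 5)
Your proposal is correct and is precisely what the paper intends: the paper's proof is the single sentence ``An identical argument to that of $p_c^{\mathrm{bond}}(G)\le p_c^{\mathrm{site}}(G)$ as presented in \cite[Proposition 7.10]{LyonsBook} yields the following,'' and you have supplied exactly that Hammersley exploration coupling together with the two routine observations (heaviness is monotone under cluster inclusion, and transitivity plus a countable union bound reduces ``$\exists$ heavy cluster'' to ``root in heavy cluster'') needed to make the transfer rigorous. No divergence from the paper's route.
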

\noindent
Thus Theorem~\ref{thm:site-threshold} and Lemma~\ref{lem:bond<site} imply 
    \begin{equation}\label{eq:uppbdphwithd}
    p_h^{\mathrm{bond}}(G,\Gamma)\le p_h^{\mathrm{site}}(G,\Gamma)\le \frac{d}{\Phi^{\w}_V(G)+d},
    \end{equation}    
where $d$ is the degree in $G$. 

For nonamenable graphs the question of $p_c<1$ was known from \cite{BSbeyond},
and later it was solved for all transitive graphs of nonlinear growth by Duminil-Copin, Goswami, Raoufi, Severo, and Yadin \cite{Hugo20}, answering \cite[Conjecture 2]{BSbeyond}.
In analogy, the second part of our Theorem~\ref{thm:contph} shows $p_h<1$ for $\w$-nonamenable graphs, and one may wonder how far the conditions can be relaxed.  

\begin{ques}\label{upward_growth}
Let $G$ be a graph with a closed subgroup of $\Aut(G)$ that acts transitively on it. Find a necessary and sufficient condition for $p_h<1$.
\end{ques}

When $H$ is the upper half space of $G$ (that is, the subgraph induced by vertices of weights at least 1) and $H$ is uniformly transient, $p_h(G)\leq p_c(H)<1$ follows from the theorem of Easo, Severo and Tassion \cite{easo2024pc}, as pointed out to us by Tom Hutchcroft. However, the Cartesian product of a grandparent graph and $\bZ$ is an example where uniform transience does not hold for $H$, but we still have $p_h(G)<1$ (this follows from the fact that $p_u(G)<1$ by \cite[Theorem 4.1.7]{Haggstrom99}).

In the nonunimodular setting one can consider an \textbf{upward growth} by counting the number of vertices $x\in B_n(o)$ such that there is a path between $o$ and $x$ of nondecreasing weights. Is nonlinear upward growth equivalent to $p_h<1$?

\subsection{Weighted random spanning trees and forests}
In \cite{FmaxSF22} the authors introduced $\w$-$\fmax$ as a modification of $\FMSF$, which is handy in the nonunimodular setting as it inherently respects $\w$.
Indeed, in Theorem~\ref{thm:CorBernPerc} and in the proof of Theorem~\ref{thm:contph}, $\fmax_{\w}$ played analogous role to the one of $\FMSF$ in the proof of \cite[Theorem 8.21]{LyonsBook}.
There are several open questions about this forest. Firstly, \cite[Questions 1.13 and 1.14]{FmaxSF22} concern indistinguishability of the $\w$-Free Maximal Spanning Forest, asking to generalize \cite{timar2006ends,Timar18ind}. 

Define the Wired Maximal Spanning Forest $\mathrm{WMaxSF}_{\w}$ by considering every bi-infinite path as a cycle. Which properties of the Wired Minimal Spanning Forest do extend, in particular, does the next analogue of \cite[Proposition 3.6]{LPS06msf} hold? 

\begin{ques}\label{ques:ph}
    Let $\Gamma$ be a closed subgroup of $\Aut(G)$ that acts transitively on $G$.  Is it true that $p_h(G,\Gamma)=p_u(G)$ if and only if $\fmax_{\w}(G)$=$\mathrm{WMaxSF}_{\w}(G)$?
\end{ques}

\subsection{Modular random graphs} In a follow-up work together with G\'abor Elek, we introduce {\it (multi)modular random graphs} as a generalization of nonunimodular quasi-transitive graphs, in the spirit of the unimodular random graphs of Aldous and Lyons \cite{URG}. Most arguments used in the present paper rely on the tilted mass transport principle, and could therefore be extended to this wider setting.

\subsection*{Acknowledgments}
We thank Russell Lyons, Robin Tucker-Drob, Konrad Wr\'obel and Tom Hutchcroft for helpful discussions, and Gabor Elek for suggesting Example~\ref{ex:freeprodtower}. We also thank Anush Tserunyan for carefully reading the original draft and for many helpful suggestions that improved the article. The first author was supported in part by the RTG award grant (DMS-2134107) from the NSF, the second author by the Icelandic Research Fund grant No. 239736-051. Both authors were also partially supported by the ERC Synergy Grant No. 810115 - DYNASNET.

\bibliographystyle{alpha}
\bibliography{noninimod} 
\end{document}